%%%%%%%%%%%%%%%%%%%%%%%%%%%%%%%%%%%%%%%%%%%%%%%%%%%%%%%%%%%%%%%%%%%%%%%%%%%%
%% Author template for Operations Reseacrh (opre) for articles with no e-companion (EC)
%% Mirko Janc, Ph.D., INFORMS, mirko.janc@informs.org
%% ver. 0.95, December 2010
%%%%%%%%%%%%%%%%%%%%%%%%%%%%%%%%%%%%%%%%%%%%%%%%%%%%%%%%%%%%%%%%%%%%%%%%%%%%
%\documentclass[opre,blindrev]{informs3}
\documentclass[opre,nonblindrev]{informs3} % current default for manuscript submission

% \DoubleSpacedXI % Made default 4/4/2014 at request
\OneAndAHalfSpacedXI % current default line spacing
%%\OneAndAHalfSpacedXII
%%\DoubleSpacedXII

% If hyperref is used, dvi-to-ps driver of choice must be declared as
%   an additional option to the \documentclass. For example
%\documentclass[dvips,opre]{informs3}      % if dvips is used
%\documentclass[dvipsone,opre]{informs3}   % if dvipsone is used, etc.

%%% OPRE uses endnotes. If you do not use them, put a percent sign before
%%% the \theendnotes command. This template does show how to use them.
\usepackage{endnotes}
% \let\footnote=\endnote

%

% Private macros here (check that there is no clash with the style)

% Natbib setup for author-year style

 % \usepackage{algpseudocode}
\usepackage{tikz}
% Private macros here (check that there is no clash with the style)

% Natbib setup for author-number style
\usepackage{natbib}
 \bibpunct[, ]{(}{)}{,}{a}{}{,}%
 \def\newblock{\ }%

\usepackage{url}            % simple URL typesetting

\usepackage{tcolorbox}
\usepackage{bbm}
\usepackage{dsfont}
\usepackage{amsmath,amsfonts,amssymb,commath,hyperref}
\usepackage{natbib}
\usepackage{caption}
\usepackage[colorinlistoftodos,prependcaption,textsize=small]{todonotes}
\usepackage[shortlabels]{enumitem}
\usepackage{xspace}
\usepackage[multiple]{footmisc}
%\usepackage[ruled]{algorithm2e} % For algorithms
%\renewcommand{\algorithmcfname}{ALGORITHM}
%\SetAlFnt{\small} 
%\SetAlCapFnt{\small}
%\SetAlCapNameFnt{\small}
%\SetAlCapHSkip{0pt}
%\IncMargin{-\parindent}

%% Alberto Algo environment
\usepackage{algorithm}
\usepackage[noend]{algpseudocode}
% Define Input and Output in algorithmic

\usepackage[capitalize]{cleveref}

\usepackage[suppress]{color-edits}
\addauthor{ch}{blue}
\addauthor{df}{orange}
\addauthor{kz}{purple}

% Natbib setup for author-year style
\usepackage{natbib}
 \bibpunct[, ]{(}{)}{,}{a}{}{,}%
 \def\newblock{\ }%
\usepackage{booktabs} % For formal tables

\usepackage{subcaption}
\usepackage{mathtools}
\usepackage{float}

\newcommand{\perperiodrev}{\mathcal{R}}

\newcommand{\Gap}{Gap}
\newcommand{\Gapnf}{Gap^{nf}}
\newcommand{\Gapp}{Gap^{\pi}}
\newcommand{\Gapa}{Gap^{a}}
\newcommand{\Gaps}{Gap^{s}}
\newcommand{\Gapd}{Gap^{d}}
\newcommand{\Gapf}{Gap^{d}}

\newcommand{\Gaprestart}{Gap^{\text{unload}}}

\newcommand{\zsemi}{z^{d}}
\newcommand{\zreal}{x^{\pi}}
\newcommand{\zfic}{x^{\text{fic}}}
\newcommand{\zrestart}{x^{\text{unload}}}
\newcommand{\xp}{x^{\mathcal{\pi}}}
\newcommand{\xnf}{x^{nf}}

\newcommand{\xstatic}{x^{s}}
\newcommand{\xsemi}{x^{d}}
\newcommand{\xfull}{x^{d}}
\newcommand{\mofp}{M^{\pi}}
\newcommand{\pin}{\pi^{nf}}
\newcommand{\pia}{\pi^{a}}
\newcommand{\pis}{\pi^s}
\newcommand{\pid}{\pi^d}
\newcommand{\pif}{\pi^d}

\newcommand{\binstatic}[1]{\mathcal{A}^s(#1)}

\newcommand{\binfull}[1]{\mathcal{A}^d(#1)}
\newcommand{\allo}{\mathcal{A}}

\newcommand{\typeballs}[1]{\theta({#1})}

\newcommand{\policycst}{a_{p}}
\newcommand{\staticcst}{a_{{s}}}
\newcommand{\semicst}{a_{{d}}}

\newcommand{\Tstar}{T^\star}
\newcommand{\Ta}{T_a}
\newcommand{\R}{R}
\newcommand{\Rn}{R^{nf}}
\newcommand{\Ra}{R^{a}}

\newcommand{\Rd}{R^{d}}

\newcommand{\thetan}{\theta_{nf}}
\newcommand{\thetaa}{\theta_{a}}

\newcommand{\thetad}{\theta_{d}}

\newcommand{\cstsemi}{c_d}
\newcommand{\cstaf}{c_{a}}

\newcommand{\Tballs}{T}

\newcommand{\Nballs}{N}
\newcommand{\Ninv}{N}
\newcommand{\qballs}{q}

\newcommand{\rballs}{r}

\newcommand{\Sinv}{S}

\newcommand{\stateinv}{z}

\newcommand{\historyballs}[1]{\sigma(#1)}
\newcommand{\historysetballs}[1]{\Sigma(#1)}
\newcommand{\history}[1]{\sigma(#1)}
\newcommand{\historypath}[1]{\sigma'(#1)}
\newcommand{\historyset}[1]{\Sigma(#1)}

\newcommand{\preferredballs}[1]{P(#1)}
\newcommand{\preferred}[1]{P(#1)}

\newcommand{\flexsetballs}[1]{\mathcal{F}(#1)}
\newcommand{\flexset}[1]{\mathcal{F}(#1)}
\newcommand{\flex}[1]{f(#1)}

\newcommand{\flexball}[1]{f(#1)}
\newcommand{\flextype}[1]{f(#1)}

\newcommand{\flexactionballs}{\omega}
\newcommand{\flexaction}{\omega}

\newcommand{\EE}[1]{\mathbb{E}\left[#1\right]}
\newcommand{\PP}[1]{\mathbb{P}\left(#1\right)}
\newcommand{\That}{\widehat{T}}
\newcommand{\Ttilde}{\widetilde{T}}

\newcommand{\parenthesis}[1]{\left(#1\right)}
\newcommand{\bracket}[1]{\left\{#1\right\}}
\newcommand{\squarebracket}[1]{\left[#1\right]}

\newcommand{\rev}{\mathcal{R}}

\usepackage[capitalize]{cleveref}
   % can revert back to \crefname{claim}{Claim}{Claims} and use \begin{claim}

%% Setup of theorem styles. Outcomment only one.
%% Preferred default is the first option.
\TheoremsNumberedThrough     % Preferred (Theorem 1, Lemma 1, Theorem 2)
%\TheoremsNumberedByChapter  % (Theorem 1.1, Lema 1.1, Theorem 1.2)
\ECRepeatTheorems

%% Setup of the equation numbering system. Outcomment only one.
%% Preferred default is the first option.
\EquationsNumberedThrough    % Default: (1), (2), ...
%\EquationsNumberedBySection % (1.1), (1.2), ...

% In the reviewing and copyediting stage enter the manuscript number.
%\MANUSCRIPTNO{} % When the article is logged in and DOI assigned to it,
                 %   this manuscript number is no longer necessary

%%%%%%%%%%%%%%%%
\begin{document}
%%%%%%%%%%%%%%%%

\RUNAUTHOR{Freund, Hssaine, and Zhao}
\RUNTITLE{On the Power of Delayed Flexibility}  
\TITLE{On the Power of Delayed Flexibility:\\Balls, Bins, and a Few Opaque Promotions}

\ARTICLEAUTHORS{%
\AUTHOR{Daniel Freund}
\AFF{Massachusetts Institute of Technology, Cambridge, MA\\ \EMAIL{dfreund@mit.edu}}%, \URL{}}
\AUTHOR{Chamsi Hssaine}
\AFF{University of Southern California, Marshall School of Business, Los Angeles, CA\\ \EMAIL{hssaine@usc.edu}}
\AUTHOR{Jiayu (Kamessi) Zhao}
\AFF{Massachusetts Institute of Technology, Cambridge, MA\\ \EMAIL{kamessi@mit.edu}}
% Enter all authors
} % e

\ABSTRACT{
Effective load balancing lies at the heart of many applications in operations. Frequently tackled via the balls-into-bins paradigm, seminal results established the power of two choices in load balancing: a limited amount of costly flexibility goes a long way in order to maintain an approximately balanced load throughout the decision-making horizon. In many applications, however, balance across time may be too stringent a requirement; rather, the only desideratum is approximate balance at the {\it end} of the horizon. Motivated by this observation, in this work we design ``delayed-flexibility'' algorithms tailored to such settings. For the canonical balls-into-bins problem, we show that a simple policy that begins exerting flexibility toward the end of the time horizon  --- namely, when $\Theta\left(\sqrt{T\log T}\right)$ periods remain --- suffices to achieve an approximately balanced load, i.e., a maximum load within $\mathcal{O}(1)$ of the average load. Moreover, with just a small amount of adaptivity, a threshold policy achieves the same result, while only exerting flexibility in $\mathcal{O}\left(\sqrt{T}\right)$ periods, thus matching a natural lower bound. We leverage these results to study the design of opaque selling strategies in retail settings, a topic recently identified as a key application of the power of two choices paradigm. For this problem, we prove that late-stage opaque selling strategies achieve the optimal trade-off between exerting costly flexibility --- i.e., offering the opaque product at a discount --- and achieving inventory cost savings through load balancing. We demonstrate the robustness of our insights via extensive numerical experiments, for a variety of customer choice models.
\KEYWORDS{load balancing; power of two choices; balls into bins; opaque selling; flexibility}

}

\maketitle

% % Paper body
 \section{Introduction}\label{sec:intro}

A key question in operations is how to effectively address supply-demand imbalances in dynamic settings. When a decision-maker has access to different resources with which to satisfy demand, this question is often tackled through the lens of \emph{load balancing}.
The canonical model of load balancing is the balls-into-bins paradigm, in which balls (demand) are sequentially placed into bins (resources) according to an allocation scheme  \citep{richa2001power}.
This model is used to understand how a decision-maker can maintain an approximately balanced load across bins --- i.e., an approximately equal number of balls in each bin --- over time. Seminal studies of this topic have established the {\it power of two choices} for balls-into-bins: \emph{a limited amount of flexibility goes a long way toward load balancing}. Specifically, an algorithm that places each ball in the minimally loaded of two randomly chosen bins in each period ensures that the gap of the system, measured by the difference between the maximum and average load across all bins, is independent of the number of balls thrown \citep{azar1994balanced,mitzenmacher1996power,peres2010}. The power of these simple algorithms has been far-reaching, particularly in computing settings where (i) the decision-maker seeks to balance the load {\it throughout} time in order to minimize metrics such as average delay, and (ii) querying the load of all bins in each period can be costly.

{This work is motivated by {three} important observations that apply to a number of load balancing settings. First, maintaining a balanced load across all resources over {\it all time} may be an unnecessarily stringent requirement. Instead, balance may only be required at the {\it end} of the decision-making horizon. Second, in many settings it is not the act of querying the loads of bins that is costly; rather, it is the act of placing a ball into a bin for which it was not destined originally. Another way to view this is that each arriving ball has a {preferred} bin, and exerting flexibility by placing it in a lesser-preferred bin is costly to the decision-maker. {Lastly}, in many applications, the cost of this flexibility is approximately homogeneous across bins, as we discuss at the end of Section \ref{sec:prelim}. Examples where this is the case include the following:

\smallskip 

{
\paragraph{Online retail warehouse operations.} In many e-commerce settings, the platform operator (e.g., Amazon) offers fulfillment services to third-party sellers \citep{amazonFBA}. Sellers bear the cost of shipping their goods (balls) into one of the warehouses operated by the platform (bins), and therefore prefer to send their shipments to the closest warehouse. The platform operator, on the other hand, maintains end-of-week shipment volume targets for each warehouse. {These fixed targets, based on pre-determined processing capacities and expected arrival volumes, represent the ideal fraction of shipments each warehouse should receive to balance the load across the network}  \citep{hssaine2024target}. While these targets need not be respected throughout the week, it is important that they be respected approximately at the end of each week. If the load across warehouses becomes so imbalanced that the operator may not achieve its targets, the platform operator can incentivize sellers to send their shipments to a different warehouse {within a small radius} by compensating them for the difference in shipping costs.

\smallskip 

\paragraph{Delivery windows in e-commerce fulfillment.} Consider a setting in which customers living in the same zip code place orders on an e-commerce website. Each customer (ball) has a preferred delivery window (bin), but the e-commerce company {can achieve greater efficiency by having a similar number of orders associated with each delivery window.} If too many customers choose the same delivery window, the e-commerce company may assign customers placing orders closer to the delivery date to a lesser-loaded delivery window that is within a few hours of their preferred window. However, the company wants to do this as infrequently as possible, as it represents a costly inconvenience to the customer. %Here, each customer order corresponds to a ball, and each delivery window corresponds to a bin.

\smallskip

\paragraph{Dynamic workforce scheduling.} In settings with flexible workforces (e.g., volunteers in nonprofit organizations \citep{escallon2025sharing}, gig economy workers in online retail fulfillment \citep{amazonFlexibleJobs}), workers (balls) may begin signing up for their preferred shifts (bins) well in advance of the shift date. As in the delivery window example, these organizations prefer to have a balanced supply of workers across shifts. Closer to the shift date, the organization may start to assign shifts to workers (or close off over-subscribed shifts). This{, however,} comes at the cost of a worse experience for flexible workers, who place a high value on the ability to choose their schedule. %Finally in this case, each worker corresponds to a ball, and each shift corresponds to a bin.

\smallskip 

\paragraph{Inventory management via opaque selling.} Consider a retailer that sells a large number of a few different products (bins), and jointly restocks them all once the stock of any one product is depleted by customers (balls). For inventory costs to be minimized, the retailer wants \emph{all} items to be close to depletion at the time restocking occurs; however, imbalances in remaining inventory do not affect the retailer's supply costs at other points in time. In practice, to address these inventory imbalances, retailers may offer opaque products to customers \citep{elmachtoub2015retailing, elmachtoub2019value}. These products give the retailer the right to choose which item is allocated to the customer (thereby giving the retailer more control over her inventory levels), at the cost of a discount. \Cref{fig:opaque_example} presents two examples of this practice. For instance, \Cref{fig:opaque-stapler} shows an example in which a seller on Amazon.com offers a randomly-colored stapler at a discount relative to staplers of a specific color. Similarly, \Cref{fig:opaque-socks} shows a retailer selling a ``Grab Bag'' of calf sleeves of unknown color for \$45, compared to calf sleeves of a specific color selling for \$75. Common to both examples is that the retailer offers a subset of products of similar quality and decides which specific item from this subset to allocate to the customer after the purchase has been made.

\begin{figure}[t]
    \centering
    \subfloat[\centering Opaque stapler sold on Amazon.com]{{\includegraphics[width=0.50\textwidth]{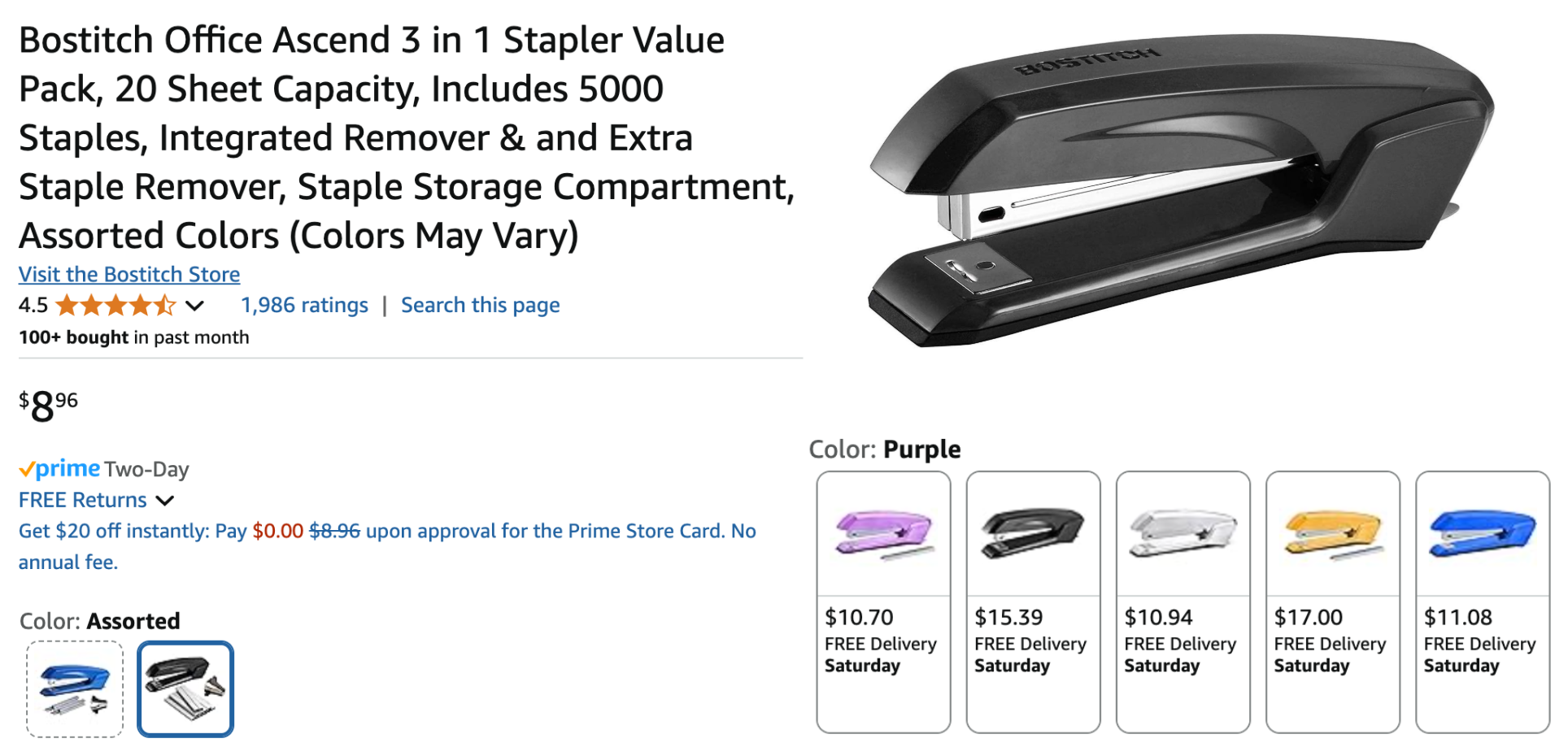} }\label{fig:opaque-stapler}}%
    \hspace{0cm}
    \subfloat[\centering Opaque calf sleeves sold by PRO Compression]{{\includegraphics[width=0.47\textwidth]{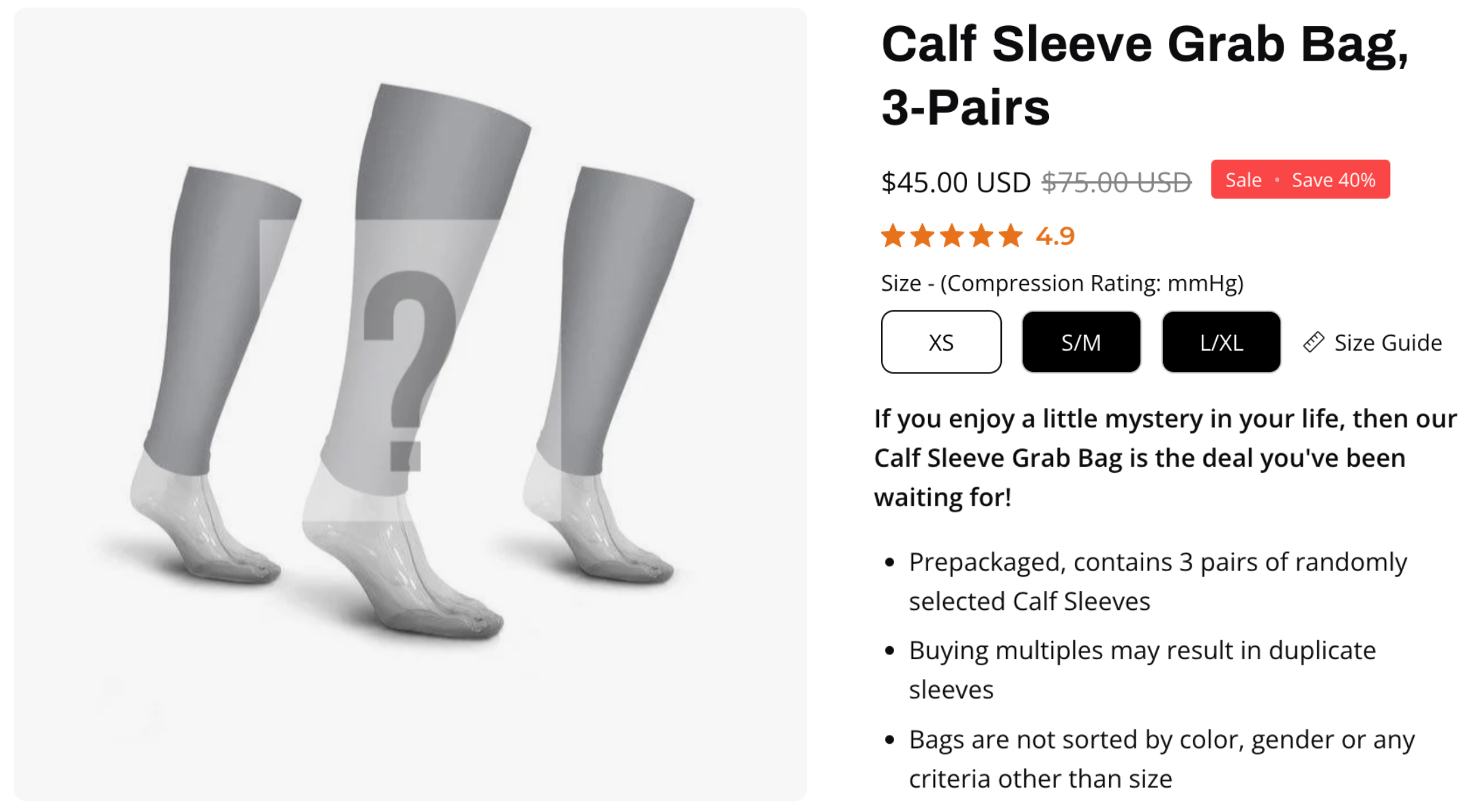} }\label{fig:opaque-socks}}%
    \caption{\centering Examples of opaque selling in retailing platforms}
    \label{fig:opaque_example}
\end{figure}

Within the context of the food industry, restaurants have partnered with online applications such as Too Good To Go to reduce food waste in recent years  \citep{toogoodtogo}. While restaurants can list any individual food item (bin) for purchase by a customer (ball) on these apps, at the end of the day they begin offering heavily discounted surprise bags filled with surplus items, the content of which the customer has no control over. These surprise bags allow restaurants to ensure that the remaining stock of food across all products is even by the end of the day, thereby avoiding excess of any one food item; this however comes at the cost of a steep discount. %In both of these settings, each customer corresponds to a ball, and each type of product corresponds to a bin.
}

\smallskip

{While existing ``power of two choices'' load balancing algorithms would achieve balance across resources by the end of the horizon in all of these settings, the fact that these algorithms attempt to allocate a demand unit to a lesser-preferred resource in each period comes at a significant cost to the decision-maker. Moreover, this cost may be incurred unnecessarily, since the decision-maker does not require balance {throughout} the horizon in these settings. 
Against this backdrop, our work tackles the following questions:
\smallskip
\begin{center}
\emph{What is the minimum amount of flexibility a decision-maker needs to exert to achieve end-of-horizon balance? Do there exist simple load balancing algorithms that achieve this lower bound?}
\end{center}
}

\subsection{Our Contributions}

{
\subsubsection*{Late-stage flexibility in the canonical balls-into-bins model.} Toward answering this question, we first study the power of late-stage flexibility in the classical balls-into-bins model. We consider a discrete decision-making horizon of length $T$. In each period, a ball arrives; this ball has a preferred bin to which it wants to be allocated, drawn uniformly at random from $N$ available bins. The ball is moreover {\it flexible} with some known probability. If the ball is flexible, the decision-maker may draw a subset of two bins uniformly at random and allocate the ball to the lesser-loaded of these two bins; we say that she {\it exerts flexibility} in this case. The decision-maker's goal is to achieve {\it approximate end-of-horizon balance} --- i.e., ensure that the expected gap between the maximum and average loads across all bins is a constant independent of $T$ --- in the minimum number of flexible throws.

We first establish an intuitive lower bound of $\Omega(\sqrt{T})$ on the minimum number of flexible actions required to achieve approximate balance in expectation (\Cref{thm:ball_lb}); this is a natural consequence of the Central Limit Theorem. In our first main contribution, we then design a non-adaptive policy that starts exerting flexibility when $\Theta\left(\sqrt{T \log T}\right)$ periods remain in the horizon. We prove that such a policy achieves approximate end-of-horizon balance, thereby achieving the lower bound up to a factor of $\Theta(\sqrt{\log T})$ (\Cref{thm:ball_static}). The analysis of this policy is tight: we show that no policy that starts exerting flexibility at a deterministic point in time can close this gap to $\Theta(\sqrt{T})$ (\Cref{prop:static_tight}). Motivated by this fact, we propose a {\it semi-dynamic}, ``point-of-no-return'' style policy that begins exerting flexibility the first time the gap between the maximum and average loads across bins exceeds a carefully designed time-varying threshold that intuitively represents the expected number of opportunities left to rectify the current imbalance of the system. If the gap exceeds this threshold, this indicates that approximate end-of-horizon balance may not be achieved unless flexibility is exerted immediately, thereby justifying the use of a costly diversion. In our second main contribution, we demonstrate that this slightly more adaptive policy closes the gap of the non-adaptive policy: it achieves approximate balance in $\mathcal{O}(\sqrt{T})$ flex throws, in expectation (\Cref{thm:ball_semi2,thm:ball_semi}).
}

{Our analysis of these algorithms is based on the following intuition:} over the course of the entire time horizon, if the decision-maker never exerted flexibility, the gap between the maximum and average loads across bins would scale as $\Theta\left(\sqrt{T}\right)$. If each time the decision-maker exerted flexibility that gap was reduced by $1$, then she would only need to do so $\mathcal{O}\left(\sqrt{T}\right)$ times in order to achieve approximate balance. {The main issue with this intuition as a formal argument, however, is that} though exerting flexibility always reduces the {\it instantaneous} gap, it does not always reduce the gap as measured {\it in hindsight}. To see this, consider the setting where $N = 2$. Consider moreover a sample path over which the decision-maker decided to exert flexibility early on in the horizon by diverting a ball to bin 1, away from bin 2. If, later on in the horizon, more balls landed in bin 1 than in bin 2, exerting flexibility in this early period would have actually {\it increased}, rather than decreased, the gap at the end of the horizon. Proving that these ``mistakes'' are not too costly in hindsight is one of the main technical challenges {we} overcome in the analysis of our two algorithms. %An additional source of difficulty stems from the fact that system is already imbalanced when the decision-maker first exerts flexibility, for both of these policies. 
This difficulty is further compounded in the analysis of our semi-dynamic policy, under which flexing begins at a {\it random} time. The key then is to show that the threshold condition is constructed carefully enough that the now-random number of rounds remaining suffices to control the accumulated imbalance across bins.

\subsubsection*{Application to inventory management via opaque selling.} We next leverage the algorithmic insights derived for the canonical balls-into-bins model to address the more complex problem of designing late-stage opaque selling strategies for inventory management.\footnote{The connection between opaque selling and balls-into-bins was first made in \citet{elmachtoub2019value}.} {In the model we consider, a retailer sells $N$ horizontally differentiated products; customers are risk-neutral, and behave according to the Salop circle model \citep{salop1979monopolistic}. The retailer sets a single price for each ``traditional'' product, sold individually; she may also offer the opaque product at a discount. In addition to the revenue generated from sales, the retailer incurs holding and replenishment costs associated with her inventory decisions.

While opaque selling strategies have been an active area of research in recent years, their appeal has been rooted in their ability to act as a market segmentation strategy, thereby generating higher revenues under certain behavioral models \citep{fay2008probabilistic}. Our main insight here is that there exist a variety of horizontally differentiated settings for which there exists {\it no} revenue benefit from offering an opaque product; rather, the {\it only} benefit of opaque selling is in inventory cost savings due to the additional control the retailer has over which products she allocates to customers. This clear trade-off between the revenue loss due to opaque discounts and the resulting inventory cost savings then motivates the design of late-stage opaque selling policies that achieve a better revenue-inventory cost trade-off than a naive policy that offers the opaque product in each period. We validate this intuition by adapting the semi-dynamic policy {from balls-into-bins} to this setting; in short, our late-stage opaque selling policy begins offering the opaque product the first time the gap between the average and minimum inventory levels exceeds a similarly designed threshold. We show that this policy achieves the optimal revenue-inventory cost trade-off in a large-inventory regime parameterized by a base-stock inventory level $S$: it achieves a long-run average revenue loss of $\mathcal{O}\left(\frac{1}{\sqrt{S}}\right)$ relative to the revenue-optimal policy that {\it never} offers the opaque discount, all the while incurring the minimum possible inventory costs of any opaque selling policy (\Cref{thm:dynamic_objective}). These results moreover imply that, in settings for which offering the opaque product provides no revenue benefit, %from a profit perspective 
{late-stage opaque (i) yields greater profit than naive opaque selling, and (ii) is beneficial relative to traditional selling when inventory costs are substantial (\Cref{cor:comparison}).} From a technical perspective, the challenge presented by this setting is that the effective ``end-of-horizon'' in this case corresponds to the first time that a product's inventory is depleted, which is random and endogenous to the opaque selling policy. The introduction of this moving target requires tighter probabilistic bounds on the gap of the system, as compared to the bounds on the gap in the balls-into-bins model, which are only required to hold in expectation.

{
Finally, we demonstrate the robustness of our theoretical insights via extensive computational experiments. Namely, we study the performance of the semi-dynamic policy under {heterogeneous} customer preferences, and {observe for this more general setting that it usually outperforms both the always-flex policy, which offers the opaque product in each period, and traditional selling.} Our experiments show that these gains are crucially due to the {\it strategic} timing of the opaque offering late in the replenishment cycle (as opposed to simply its {\it infrequent} offering) in order to achieve significant inventory cost savings. We {also} %moreover 
uncover the unintuitive fact that, even in settings where opaque selling can {\it increase} revenue due to an increase in customers' purchase probabilities, the semi-dynamic policy can actually achieve {\it higher} profits than the always-flex policy. %, which not only obtains higher revenues but also balances inventory much more aggressively. 
This phenomenon is due to the fact that the retailer must hold additional inventory to meet the increased demand, which results in higher per-period inventory costs in the long run (despite the always-flex policy's ability to balance inventory much more aggressively). {Lastly,} %Finally, 
we demonstrate the semi-dynamic policy's robust performance to a wide variety of opaque discounts, as well as risk-averse and risk-seeking customer behavior. In short, our theory and experiments yield the important managerial insight that the semi-dynamic policy efficiently interpolates between traditional selling and the always-flex policy, {making} it an attractive strategy for retailers to trade off between revenue and inventory cost considerations.
}
}

\subsubsection*{Paper organization.} We review the related literature in the remainder of this section. We present the canonical balls-into-bins model in \Cref{sec:prelim}, and design and analyze optimal late-stage flexibility algorithms for this model in \Cref{sec:balls}. In \Cref{sec:opaque_salop} we apply our insights from the balls-into-bins model to the opaque selling problem. We complement our theoretical results with extensive numerical experiments in \Cref{sec:numerics}, and conclude in \Cref{sec:conclusion}.
}

{
\subsection{Related Work}
Our work relates to three separate streams of literature: studies of balls-into-bins processes, literature studying the revenue and inventory implications of opaque selling practices, and, more generally, the value of flexibility in operations. We survey the most closely related papers for each of these lines of work below.

\subsubsection*{Balls-into-bins processes.} The balls-into-bins model is one of the most fundamental models in applied probability. This paradigm gained traction early on as being well-suited for a variety of computing applications, such {as} hashing, shared memory emulation, dynamic task assignment to servers, and virtual circuit routing \citep{richa2001power}. We refer the reader to \citet{richa2001power} for an exhaustive survey on this line of work, focusing our discussion on the most closely related technical results.

\citet{raab1998balls} analyze a basic model where $m$ balls are sequentially and randomly assigned to $n$ bins, and derive high-probability upper and lower bounds on the maximum load across bins, showing that the gap between the most-loaded bin and the average load is $\Theta\left(\sqrt{\frac{m\log n}{n}}\right)$. For the special case where \mbox{$m = n$}, \citet{azar1994balanced} and \citet{mitzenmacher1996power} established the celebrated {\it power of two choices}, showing that the load balancing strategy that allocates each ball to the lesser-loaded of two randomly chosen bins yields an exponential decrease in the maximum load across all bins. \citet{peres2010} (whose arrival model we follow) later generalized these results to the case where $m \gg n$; they showed that if a ball is assigned to a random bin with probability $q \in (0,1]$, and to the lesser-loaded of two random bins with probability $1 - q$, the expected gap is a constant independent of $m$ (though dependent on $n$). 

At a high level, the underlying motivation behind the power of two choices framework is the idea that, in many computing applications, load balancing requires costly queries of the load of each bin in each period. Our contribution to this line of work is to push this idea to its limit, by showing that to achieve an approximately balanced load at the end of the horizon, not only does it suffice for a decision-maker to load balance with {\it two} (as opposed to $n$) bins; the decision-maker need only do so a vanishingly small fraction of the time, at the very end of the horizon. In this sense, our work can be viewed as even further evidence of the power of two choices in balls-into-bins processes.

\subsubsection*{On the power of flexibility in opaque selling.} The practice of offering opaque products has been widely studied in the operations literature. From a revenue perspective, opaque selling offers two key benefits: (i) it can boost overall demand, and (ii) it enables better capacity utilization when demand and supply are misaligned~\citep{gallego2004revenue}. Much of the focus on opaque selling has been on its ability to effectively {price discriminate} between customers with different willingnesses-to-pay, thereby generating gains in revenue~\citep{jiang2007price, fay2008probabilistic, jerath2010revenue, zhang2015probabilistic}. The majority of these latter works focus on uncapacitated settings with horizontally differentiated products, with customers behaving according to the Hotelling or Salop circle models, as is the case in our model. \citet{elmachtoub2021power} extend this line of work by identifying conditions under which opaque selling outperforms both discriminatory and uniform pricing for exchangeable valuation distributions. More recently, \citet{housni2025price} study pricing and assortment optimization with opaque products under MNL choice, and \citet{fu2025optimal} design distribution-free mechanisms for this problem. 

Relative to this literature, our contribution is to show that even in settings where opaque selling generates revenue losses, {it} %this
remains an appealing practice due to the inventory cost savings resulting from the retailer having the power to choose which good to allocate to the customer. A few prior works have studied this inventory aspect of opaque selling. For instance, \citet{xiao2014evaluating} analyze a model wherein a retailer sells two products over a finite selling period, in addition to an opaque product. They propose a dynamic programming approach to compute the optimal opaque selling decision at each state, and moreover illustrate the positive effects of inventory pooling in their setting. \citet{fay2015timing} study a stylized model wherein the retailer selling two goods needs to time the opaque selling decision before or after observing demand, in addition to deciding how much of each good to order. More recently, \citet{feldmanapproximation} study a retailer's finite-horizon dynamic pricing problem in the presence of opaque products. Less closely related to our work is \citet{ren2022opaque}, who propose a stylized game-theoretic model of a retailer offering two vertically differentiated products over two periods, and using opaque selling as an inventory-clearance strategy in the second period. The goal of the retailer is to set optimal prices and inventory levels when customers strategically time their purchases.

We highlight two works upon which our work builds: \citet{elmachtoub2015retailing} and \citet{elmachtoub2019value}. In the model considered by \citet{elmachtoub2015retailing}, a retailer sells two products over an infinite horizon, with inventory dynamics as in our setting (and, in particular, in a large-market regime parameterized by order-up-to-level $S$). The authors show that when demand is symmetric across both products, the inventory cost savings are on the order of $\Omega\left(\frac{1}{\sqrt{S}}\right)$ as long as the per-period opaque purchase probability $q \in \Omega\left(\frac{1}{\sqrt{S}}\right)$. When demand follows a Hotelling model, they demonstrate the existence of an opaque discount that induces $q \in \Theta\left(\frac{1}{\sqrt{S}}\right)$ and yields a revenue loss on the order of $\mathcal{O}\left(\frac{1}{S}\right)$. Since $N = 2$ is a special case of our setting, our work adds to their result by proposing an {\it adaptive} opaque selling policy with the same guarantees. \citet{elmachtoub2019value} later extended this first work by considering $N \geq 2$ horizontally differentiated products. Focusing only on the inventory management problem in this case, they leverage an elegant connection to the balls-into-bins model to show that opaque selling yields relative cost savings of $\Theta\left(\frac{1}{\sqrt{S}}\right)$ when $q \in \Omega\left(\frac{1}{\sqrt{S^{1-\delta}}}\right)$, for $\delta > 0$. They introduce revenue considerations in numerical experiments, exhibiting a number of demand models under which opaque selling yields strong profit gains relative to traditional selling.\footnote{These findings were later echoed in \citet{zhang2024less}, who extend their results to non-uniform demand across products.} We bridge the theoretical gap between \citet{elmachtoub2015retailing} and \citet{elmachtoub2019value} by showing that even for demand models for which opaque selling yields {\it no} revenue gains, late-stage opaque selling strategies generate the same inventory savings as those that offer the opaque product in each period, all the while losing a limited amount in revenue due to opaque discounts.}

\subsubsection*{General flexible processes.}  Finally, we note that the power of flexibility has been extensively studied in operations applications such as manufacturing \citep{jordan1995principles}, queuing systems \citep{tsitsiklis2013power,tsitsiklis2017flexible}, workforce cross-training \citep{wallace2005staffing}, bin packing \citet{tan2024online}, and warehouse operations \citep{hssaine2024target}. All of these works demonstrate that a small amount of supply-side flexibility suffices to realize most of the benefits of full flexibility. We refer the reader to \citet{wang2021review} for an excellent survey of more recent results on the power of flexibility in operations.

Moreover, our opaque selling contribution relates to broader research on how demand-side flexibility can improve supply utilization and reduce costs. For instance, \citet{zhou2021supply} and \citet{freund2021pricing} show that time-flexible demand improves utilization in scheduled services and ride-hailing, respectively. {Relatedly, in the resource allocation setting, \citet{golrezaei2021online} show the value of demand flexibility when both time-flexible and time-inflexible customers seek a service with periodic replenishments. \citet{zhu2022performance}} study a flexible version of the classical network revenue management problem, where the service provider chooses which combination of resources to allocate to each customer.

 \section{Preliminaries}\label{sec:prelim}

{In this section we present the classical balls-into-bins model \citep{mitzenmacher2001power,peres2010}.} For clarity of exposition, we defer a description of the opaque selling model to \Cref{sec:opaque_salop}.

\subsubsection*{The balls-into-bins process.} The vanilla balls-into-bins model evolves over a discrete, finite-time horizon in which $\Tballs$ balls are sequentially allocated into $\Nballs \geq 2$ bins. At the beginning of each period \mbox{$t \in \{1,2,\ldots,T\}$}, a ball arrives; the ball has a {\it preferred bin}, drawn uniformly at random from $\{1,2,\ldots,N\}$ and denoted by $\preferredballs{t}$. In addition to this, the ball is a {\it flexible}, or {\it flex}, ball with probability $\qballs \in (0,1]$. We let $f(t)$ be the indicator variable denoting whether the ball is flexible, with $f(t) = 1$ if it is, and $f(t) = 0$ otherwise. If the ball is not flexible, the decision-maker places it in its preferred bin. If it is flexible, on the other hand, the decision-maker has more control over the bin to which the ball will be allocated. In particular, the decision-maker may choose to draw a random subset of two bins and may place the ball in any of these two random bins. We refer to this random subset of bins as the {\it flex set}, denoted by $\flexsetballs{t}$, and refer to the act of choosing the bin to which to allocate the ball as {\it exerting flexibility}, or exercising a {\it flex throw}. If the decision-maker chooses not to exert flexibility, the ball is placed in its preferred bin. In this latter case, or if the ball is not a flex ball, we write $\flexsetballs{t} = \phi$. For $t \in \{1,2,\ldots,T\}$ we let $\typeballs{t}{=\left(\flexball{t}, \preferredballs{t}, \flexsetballs{t}\right)}$, and define the {history} of balls at time $t$ to be $\historyballs{t} = (\typeballs{1},\ldots,\typeballs{t-1})$. Finally, let $\historysetballs{t}$ be the set of all possible histories at time~$t$.

\subsubsection*{Objective.} 
Let $\pi$ denote a policy that maps the number of balls in each bin to the decision to exert flexibility, denoted by $\omega^\pi(t)$ (with $\flexactionballs^\pi(t) = 1$ if exercised, and 0 otherwise). For $i \in [N]$, we use $x_i^\pi(t)$ to denote the number of balls in bin $i$ at the end of period $t$ under policy $\pi$, and henceforth refer to this quantity as the {\it load} of bin $i$. If a ball is flexible,  we assume the decision-maker always places it in the bin with the smallest load in its flex set. Formally, letting $\allo^\pi(t)$ denote the bin to which the $t$-th ball is allocated, we have:
$$
\allo^\pi(t) = \begin{cases}
    \arg\min_{j\in\flexsetballs{t}} x_j^\pi(t) &\quad \text{if } \flexball{t}\flexactionballs^\pi(t) = 1 \\
    \preferredballs{t} &\quad \text{if } \flexball{t}\flexactionballs^\pi(t) = 0,
    \end{cases}
$$
where we define the $\arg\min$ with a lexicographic tie-breaking rule that returns the smallest value~$i$ of all bins with the smallest number of balls. Finally, we let $\mofp$ denote the number of times that the decision-maker chooses the bin that a flex ball goes into, i.e., $\mofp = \sum_{t=1}^Tf(t)\omega^\pi(t).$

The decision-maker's goal is to ensure that the load across bins is approximately balanced at the end of the time horizon, all the while minimally exerting flexibility. To formalize this two-fold objective, we define the {\it gap} of the system in period $t$ under $\pi$ {as} the difference between the maximum load across all bins and the average load {after the $t$-th ball arrives}, given by $t/N$. Letting $\Gap^\pi(t)$ denote this gap, we have:
\begin{align*}
    \Gap^{\pi}(t) = \max_{i \in [\Nballs]} x_i^\pi(t) - \frac{t}{\Nballs} \quad \forall \, t \in \{1,2,\ldots, T\}.
\end{align*}
We say that the system is approximately balanced under $\pi$ if $\mathbb{E}\left[\Gapp(T)\right] \in \mathcal{O}(1)$, where the Big-O notation is with respect to the time horizon $\Tballs$. 

In order to formalize the desideratum of achieving approximate balance in a minimal number of flex throws, we present two simple policies that have previously been analyzed in the literature.  Consider first the {\it no-flex} policy, denoted by superscript $nf$, that never exerts flexibility (i.e., $M^{nf} = 0$ almost surely). It is known that under this policy, the load across bins is imbalanced by the end of the horizon, with \mbox{$\mathbb{E}\left[\Gapnf\right] \in \Theta\left(\sqrt{T}\right)$} \citep{raab1998balls}. On the other hand, the {\it always-flex} policy, denoted by superscript $a$, always exerts flexibility, with $\omega^a(t) = 1$ for all $t \in \{1,2,\ldots, T\}$. This policy has been shown to achieve an approximately balanced load at the end of the horizon, with \mbox{$\mathbb{E}\left[\Gapa\right] \in \mathcal{O}(1)$ in $\EE{M^{a}}{=Tq}\in\Theta(T)$} flex throws in expectation \citep{peres2010}. Given these benchmarks, our goal is to design policies that achieve two desiderata: (i) ensure that the load is balanced at the end of the horizon, with $\mathbb{E}[\Gap^\pi(T)] \in \mathcal{O}(1)$, and (ii) doing this with $o(T)$ flexes, in expectation.

{
\subsubsection*{Discussion of modeling assumptions.} We conclude this section with a discussion of the modeling assumptions upon which the classical balls-into-bins model relies. Specifically, the model we consider here is most appropriate for settings {in which} (i) demand for resources is approximately homogeneous, and (ii) the cost of diverting a demand unit from its preferred resource is approximately the same across all lesser-preferred resources (otherwise, the number of flexes $M^\pi$ ceases to be a meaningful metric of costly flexibility). The first assumption is not critical for our results; at the end of \Cref{sec:balls} we discuss how our results easily extend to settings in which demand is non-uniform across resources. It therefore remains to justify the assumption of uniform costly diversion. We highlight instances of the examples provided in \Cref{sec:intro} for which this assumption is well-justified below.

For the warehouse operations example, this model is most appropriate for ``regionalized'' networks that have ex-ante been partitioned into warehouses within the same geographic region \citep{oneill2023amazon}. For these smaller regions, it is sensible to assume that sellers' shipping costs do not vary wildly across warehouses that are at a slightly further distance from their preferred warehouse. For the opaque selling examples, the uniform cost of diversion follows from the practice of offering a discount for the opaque product. Finally, within the context of delivery windows and dynamic workforce scheduling, one can assume that the decision-maker partitions windows/shifts ex-ante (e.g., according to peak and non-peak hours). As in the regionalization example, for all intents and purposes, these partitions form different load balancing systems, and diversions within these systems are similarly inconvenient to customers/workers.

Finally, we note that the trade-off between balance and flexibility we consider in this work can be tackled by formulating an appropriate stochastic optimization problem tailored to each of the aforementioned applications. However, the popularity of the balls-into-bins model lies in its generality, and the fact that it is able to parsimoniously capture this fundamental trade-off. We will see that this parsimony gives rise to intuitive and easy-to-implement algorithms.
}

\begin{comment}
\begin{remark}\label{rmk:r=2}
%\subsubsection*{Flex set size. } 
We assume without loss of generality that our policies operate with $\rballs=2$, i.e., we have $|\flexsetballs{t}|=2 \ \forall \ t$. Our algorithms can be directly adapted to arbitrary $r$ as follows:
when $|\flexset{t}|>2$, choose a subset of size 2 of $\flexset{t}$ uniformly at random, and flex only within this subset. {Hence, }%Given this, 
we abuse notation by writing our policies for arbitrary flex sets, but analyze them for $r=2$.
\end{remark}
\end{comment}

{
\subsubsection*{Technical notations.} Throughout the paper we use $\text{Ber}(p)$ to denote a Bernoulli random variable with probability $p$, $\text{B}(n, p)$ to denote a Binomial random variable with probability $p$ and $n$ trials, and $U(a,b)$ to denote a uniform random variable with support $[a,b].$ Finally, $\Phi(\cdot)$ denotes the CDF of the standard normal distribution $\mathcal{N}(0,1)$.
}

\section{Optimality of Late-Stage Flexing for Balls-into-Bins}\label{sec:balls}

In this section we tackle the task of designing policies that achieve the two desiderata described above. We first study the question of how many flex throws are {\it required} to achieve approximate balance at the end of the horizon. We leverage this lower bound to design an approximately optimal policy that balances the system up to a factor of $\Theta(\sqrt{\log T})$ of the minimum number of flexes. We then use the analysis of this policy to derive a simple optimal policy that closes this gap.

\subsection{Warm-Up: Approximate Optimality of Non-Adaptive Late-Stage Flexing}\label{eq:bib-lb}

\cref{thm:ball_lb} below establishes that, in order to achieve an approximately balanced load, a policy must exert flexibility $\Omega(\sqrt{T})$ times in expectation.
\begin{proposition}\label{thm:ball_lb}
Consider any policy $\pi$ such that $\mathbb{E}\left[\Gap^\pi(T)\right] \in \mathcal{O}(1)$. Then, $\mathbb{E}\left[\mofp\right] \in \Omega\left(\sqrt{T}\right).$
\end{proposition}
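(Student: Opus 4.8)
## Proof proposal

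The plan is to show that without exerting flexibility at least $\Omega(\sqrt{T})$ times in expectation, the gap at time $T$ cannot be $\mathcal{O}(1)$. The key observation is that each exerted flex can reduce the gap by at most a bounded amount (in fact, at most $1$), while the natural stochastic variation of the no-flex process forces a gap of order $\sqrt{T}$. So to beat the $\sqrt{T}$ natural gap down to $\mathcal{O}(1)$, one needs order $\sqrt{T}$ flexes.

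First I would make precise the claim that flexing a single ball changes the final load vector in a controlled way. Consider a coupling argument: fix the realized types $\tau_B(1), \dots, \tau_B(T)$ (in particular the preferred bins $p_B(t)$ and flex sets), and compare the trajectory of policy $\pi$ against the no-flex trajectory under the same randomness. Each period in which $\pi$ exerts flexibility and actually moves the ball away from its preferred bin changes the count in exactly two bins by $\pm 1$. I would argue (by a straightforward induction on the flex periods, or by a direct accounting) that the $\ell_\infty$-distance between the load vector of $\pi$ at time $T$ and the no-flex load vector at time $T$ is at most $M^\pi$, the number of exerted flexes. Hence
\begin{align*}
\max_{i \in [N_B]} x_i^{\pi}(T) \;\geq\; \max_{i \in [N_B]} x_i^{nf}(T) - M^\pi,
\end{align*}
pointwise on the sample space, which gives $\Gap^{\pi}(T) \geq \Gap^{nf}(T) - M^\pi$.

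Taking expectations and using the known fact $\mathbb{E}\left[\Gap^{nf}(T)\right] \in \Theta(\sqrt{T})$ (from \cite{raab1998balls}, as cited in the benchmarks paragraph), we get $\mathbb{E}[M^\pi] \geq \mathbb{E}\left[\Gap^{nf}(T)\right] - \mathbb{E}\left[\Gap^{\pi}(T)\right] \in \Omega(\sqrt{T}) - \mathcal{O}(1) = \Omega(\sqrt{T})$, which is the claim. One subtlety to handle carefully: $M^\pi$ counts only flex balls for which the flex is exercised and the DM's choice differs from $p_B(t)$; if the DM exercises the flex but the min-loaded bin in the flex set happens to coincide with $p_B(t)$, no change occurs, so the bound $\|x^\pi(T) - x^{nf}(T)\|_\infty \leq M^\pi$ only becomes tighter. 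I should also confirm that $\mathbb{E}\left[\Gap^{nf}(T)\right]$ is genuinely $\Omega(\sqrt{T})$ and not merely $\mathcal{O}(\sqrt{T})$ — the classical balls-into-bins lower bound gives this, since with $T$ balls in $N_B$ bins the maximum load exceeds the average by $\Omega(\sqrt{T/N_B} \cdot \sqrt{\log N_B})$ with constant probability, hence in expectation.

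The main obstacle I anticipate is making the coupling/accounting argument fully rigorous, since the policy $\pi$ is adaptive: its decision to flex at time $t$ depends on the history $H_B(t)$, which under $\pi$ differs from the history under the no-flex policy. The clean way around this is to not couple the two full processes dynamically, but rather to fix \emph{all} exogenous randomness (the i.i.d. types, including which balls are flex balls, their preferred bins, and their flex sets), run $\pi$ on this fixed sequence to determine the set $S \subseteq [T]$ of periods in which $\pi$ actually relocates a ball, and then observe that the no-flex load vector and $\pi$'s load vector differ only through the $|S| \leq M^\pi$ relocations, each moving one ball from one bin to another. This sidesteps any measure-theoretic issue about adaptivity, because on each fixed sample path both trajectories are deterministic functions of the (now fixed) data and the (now determined) choices. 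A secondary, minor obstacle is the lexicographic tie-breaking and the possibility $q_B < 1$, but neither affects the bound: fewer opportunities to flex only makes it harder, and tie-breaking is irrelevant to the counting.
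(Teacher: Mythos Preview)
Your proposal is correct and follows essentially the same approach as the paper: both establish the pathwise inequality $\Gap^{\pi}(T) \geq \Gap^{nf}(T) - M^{\pi}$ via the observation that each exerted flex changes the load vector by at most one in any coordinate, and then combine this with the $\Omega(\sqrt{T})$ no-flex gap. The only minor difference is that you take expectations directly (using the cited $\mathbb{E}[\Gap^{nf}(T)] \in \Theta(\sqrt{T})$), whereas the paper's appendix proof first proves a constant-probability lower bound $\mathbb{P}(\Gap^{nf}(T) \geq 2\sqrt{T}) \geq a'$ via Berry--Esseen and then conditions on that event; your route is slightly more streamlined but the content is the same.
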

{This lower bound is quite intuitive: every time the decision-maker exerts flexibility, the gap at time $T$ decreases by at most 1. Since the expected gap when balls are randomly thrown into the bins is well-known to scale as $\Theta\left(\sqrt{T}\right)$, closing this gap to $\mathcal{O}(1)$ would require the decision-maker to exert flexibility at least $\Theta\left(\sqrt{T}\right)$ times. We defer a formal proof of this fact to Appendix \ref{app:ball_lb}.}

We next design two policies that strive to achieve this lower bound. By the above intuition, to close the gap at time $T$ to $\mathcal{O}(1)$ it should suffice to begin exercising the flex option with~$\widetilde{\mathcal{O}}\left(\sqrt{T}\right)$ periods remaining in the horizon. Thus motivated, the first policy we consider, referred to as the {\it static} policy $\pis$, is non-adaptive, and starts {exerting flexibility when} $\Theta\left(\sqrt{T\log T}\right)$ periods remain. 
{Specifically, $\pis$ fixes a time \mbox{$\That = \lfloor T- \staticcst\sqrt{T\log T}\rfloor$}, where %\chreplace{\mbox{$\staticcst = \frac{2\sqrt{6}N(N-1)}{q}$}}
$\staticcst > 0$ is a constant tuning parameter. This simple policy exerts flexibility for all $t \geq \That$ (i.e., it places flex balls in the minimally loaded bin within the flex set), but not before. Throughout the section, we use superscript $s$ to refer to all quantities induced by this static policy.

\Cref{thm:ball_static} establishes that the intuition underlying the design of this naive policy is correct: exerting flexibility $\Theta(\sqrt{T\log T})$ times suffices to achieve a balanced load by the end of the horizon.
\begin{theorem} \label{thm:ball_static}
For any constant $\staticcst \geq \frac{2\sqrt{2}{N\choose 2}}{q}$, the static policy $\pi^s$ achieves $\mathbb{E}\left[\Gaps(T)\right] \in \mathcal{O}\left(1\right).$
\end{theorem}

{We will build on the analysis of the static policy to analyze the optimal semi-dynamic policy in \Cref{app:ball_semi}. Hence, we provide a detailed proof sketch below, deferring a complete proof of \Cref{thm:ball_static} to Appendix \ref{app:ball_proof_static}.
\paragraph{Proof sketch.}
 {The sample paths where $\Gaps(T) > 0$ can be partitioned into two events:} %To bound the expected gap of the system at the end of the horizon, it suffices to consider sample paths over which the maximally and minimally loaded bins at the end of period $T$ have different loads. We partition these sample paths into 
(i) the event $E^1$, under which the maximally and minimally loaded bins in period $T$ never had the same load between $\widehat{T}$ and $T$, and (ii) the event $E^2$, under which these two bins had the same load {at some ``intersection time''} $\tau \in \{\widehat{T},\ldots,T-1\}$. Noting that the gap of the system can trivially be bounded by $T$ under event $E^1$, and by $T-\tau$ under event $E^2$ (which would occur if all $T-\tau$ balls went into the maximally loaded bin between $\tau+1$ and $T$), we obtain the following bound on $\mathbb{E}[\Gap^s(T)]$:
\begin{align}\label{eq:main-decomp}
\mathbb{E}[\Gap^s(T)] \leq T\mathbb{P}(E^1) + \mathbb{E}\left[T-\tau \mid E^2\right]\mathbb{P}(E^2).
\end{align}

The key steps of the proof then lie in showing that (i) $\mathbb{P}(E^1) \in \mathcal{O}\left(\That^{-1}\right)+e^{-\Omega(T-\That)}$, and (ii) given {the last intersection time $\tau$}, $\mathbb{P}(E^2 \mid \tau) \in e^{-\Omega(T-\tau)}.
$
To prove each of these two facts, we let $i$ and $j$ respectively denote the maximally and minimally loaded bins at time $T$. 

By definition, $x_i^s(t) > x_j^s(t)$ for all $t \in \{\That, \ldots, T\}$ under event $E^1$. Intuitively, $E^1$ would occur if the flex balls thrown into bin $j$ between $\That+1$ and $T$ do not sufficiently correct the imbalances created by the random balls thrown into bin $i$ throughout the entire time horizon. We formalize this intuition via a union bound over two bad events: (a) that the number of random balls thrown into bin $i$ over all $T$ periods exceeds those thrown into bin $j$ by $\frac{q}{2{N\choose 2}} (T-\That)$, where $\frac{q}{{N\choose 2}}(T-\That)$ represents the expected number of times $\{i,j\}$ is chosen as the flex set, and (b) that the number of flex balls allocated to bin $j$ between $\That+1$ and $T$, denoted by $Y_j(T-\That)$, exceeds the number of flex balls allocated to bin $i$ between $\That+1$ and $T$, denoted by $Y_i(T-\That)$, by no more than $\frac{q}{2{N\choose 2}}(T-\That)$.

Noting that the number of random balls allocated to bins $i$ and $j$ are respectively binomially distributed, the bound on event (a) follows from a standard application of Hoeffding's inequality. Using the fact that \mbox{$T-\That \geq a_s\sqrt{T\log T}$}, this gives rise to the $\mathcal{O}(\That^{-1})$ term in (i), for large enough $a_s$. The main challenge lies in bounding event (b), due to the state-dependent nature of our allocation rule. In particular, characterizing the excess of flex balls placed into bin $j$ over bin $i$ requires not only keeping track of the loads of these two bins, but also the loads of all other bins with which $j$ and $i$ could have been included in a flex set. Keeping track of the loads of these other bins is important, given that bin $i$ may have a smaller load than some bin $k \not\in \{i,j\}$ at some point in the flexing horizon, in which case the policy would place a flex ball in bin $i$ and have further contributed to the imbalance between $i$ and $j$.

We overcome this challenge by showing that, under event $E^1$, our policy makes the same decisions as a ``lazy'' allocation rule that is biased toward bin $j$, and is conceptually easier to analyze. For this policy, we show that the difference between the number of flex balls allocated to bin $j$ and those allocated to bin $i$ form a submartingale, since the lazy allocation rule is biased toward bin $j$. This then allows us to leverage Azuma-Hoeffding's inequality to show that the probability that $Y_j(T-\That)-Y_i(T-\That) \leq \frac{q}{2{N\choose 2}}(T-\That)$ is exponentially decreasing in $T-\That$, thereby completing the analysis of $\mathbb{P}(E^1)$.

The exponential bound on $\mathbb{P}(E^2 \mid \tau)$ follows similar lines. The key difference is that, given that the loads of bins $i$ and $j$ intersected in some period $\tau \geq \That$, any imbalance between bins $i$ and $j$ would be caused by the random balls thrown between $\tau+1$ and $T$, as opposed to {\it all} random balls thrown throughout the horizon, as was the case in the analysis of $E^1$. A Hoeffding's bound over this source of imbalance yields a term that is exponentially decreasing in $T-\tau$. We then similarly leverage the ``lazy'' allocation rule to show that the probability that the imbalance caused by incorrect flex throws is linear in $T-\tau$ is also exponentially decreasing in $T-\tau$. Applying the bounds on $\mathbb{P}(E^1)$ and $\mathbb{P}(E^2\mid\tau)$ to \eqref{eq:main-decomp}, we obtain the theorem.
\hfill\Halmos
}

\subsection{Optimality of a Semi-Dynamic Threshold Policy} \label{app:ball_semi}

Notice that our static policy does not exactly meet the lower bound from \cref{thm:ball_lb} due to the additional $\mathcal{O}\left(\sqrt{\log T}\right)$ factor in the number of flexes. This is not an artifact of our analysis, nor is it due to our definition of~$\staticcst$. Instead, it is a general fact about non-adaptive policies: we show in Appendix \ref{app:static_tight} that no non-adaptive policy can achieve $\mathcal{O}(1)$ gap at $T$ while exerting flexibility $\mathcal{O}\left(\sqrt{T}\right)$ times. At a high level, this follows from the fact that static policies ignore that {\it not all sample paths are created equal}: while the loads under certain sample paths require a larger number of flexes to achieve balance, on others, a smaller number of flexes suffices. This naturally leads us to the idea that, while no {static} policy can achieve an approximately balanced load with $\mathcal{O}\left(\sqrt{T}\right)$ flexes, it may be that an {\it adaptive} policy can.%the question remains open as to whether an {\it adaptive} policy can.

Thus motivated, we consider an adaptive modification of our static policy, referred to as the {\it semi-dynamic} policy $\pi^d$, which {begins flexing the first time the gap of the system exceeds a pre-specified threshold, and keeps flexing from then onwards}. Specifically, the threshold (also referred to as the {\it flexing}) condition that our policy verifies is given by:
\begin{align}\label{eq:dynamic-threshold}
\Gapd(t) \geq \frac{a_d(T-t)q}{N},
\end{align}
where the superscript $d$ is used for all quantities induced by the semi-dynamic policy, and $a_d \in (0,1]$ is a constant tuning parameter. We let $\Tstar := \inf\left\{t:\Gapf(t) \geq \frac{\semicst(T-t)q}{N}\right\}$ be the period in which the flexing condition is triggered; that is, the first period in which the semi-dynamic policy flexes is $\Tstar + 1$. We provide a complete description of the semi-dynamic policy in \Cref{alg:ball_full}.

\begin{algorithm}[t]
\caption{Semi-Dynamic Policy $\pif$}\label{alg:ball_full}
%\SetAlgoLined
%\KwResult{Write here the result }
\begin{algorithmic}[1]
\Require $\xfull_i(0)=0\;\forall \, i \in [N]$, $\flexaction^d(t) = 0\;\forall \ t \in [T]$, constant $\semicst > 0$
 \For{$t \in [T]$}
    \If{$\flexaction^d(t)\flextype{t} = 1$}
    \State Sample flex set $\mathcal{F}(t)$.
    \State Allocate ball to least-loaded bin in $\mathcal{F}(t)$, i.e., set $\binfull{t} = \argmin_{i\in\flexset{t}} \xfull_i(t-1)$.
    \Else \ Allocate ball to preferred bin, i.e., set $\binfull{t} = \preferred{t}$.
    \EndIf
    \For{$i \in [N]$}
    \State Update load of each bin:
    \[\xfull_i(t)= \begin{cases}
    \xfull_i(t-1)+1 &\quad \text{if } i = \binfull{t} \\
    \xfull_i(t-1) &\quad \text{otherwise}.
    \end{cases}\]
    \EndFor
    \If{$\Gapf(t) \geq \frac{\semicst(T-t)q}{N}$}
    \State Exercise the flex option from $t+1$ onwards, i.e., set $\flexaction^d(t') = 1 \; \forall \ t' > t$.
    \EndIf
\EndFor
\end{algorithmic}
\end{algorithm}

The threshold condition specified in \Cref{eq:dynamic-threshold} can be interpreted as verifying whether the system has attained a ``point of no return,'' at each time $t$. Namely, the quantity $\frac{a_d(T-t)q}{N}$ represents, for any bin $i$, a lower bound on the number of remaining opportunities to flex a ball that prefers $i$ into another bin. If the gap of the system exceeds this estimate, since exerting a flex throw in any given period reduces the gap of the system by at most one, the semi-dynamic policy assumes that the only way it can balance the system by the end of the horizon is if it starts load balancing immediately. Having established that it is at a point of no return, our policy flexes from then onwards. \Cref{fig:bib-sample-paths} illustrates the difference between the times at which the static and semi-dynamic policies begin flexing for two sample paths, and provides intuition as to the challenge in analyzing the random time at which the semi-dynamic policy begins flexing. In particular, while we observe in \Cref{fig:bib-sample-2} that it indeed avoids unnecessary flexing over some sample paths, with $\Tstar > \That$ if the gap at $\That$ is low, \Cref{fig:bib-sample-1} also illustrates that it may begin flexing {\it well before} $\That$. The main concern would then be that the threshold condition in \Cref{eq:dynamic-threshold} was chosen too conservatively, resulting in early starts happening more frequently than late starts, as depicted in \Cref{fig:bib-sample-paths}. \Cref{thm:ball_semi2} below establishes that our threshold condition in fact guarantees that late starts occur frequently enough, and moreover are within $\mathcal{O}(\sqrt{T})$ of the end of the horizon, in expectation.

\begin{figure}%[t]
    \centering
    \subfloat[\centering Early start: $\Tstar < \That$]{{\includegraphics[width=0.45\textwidth]{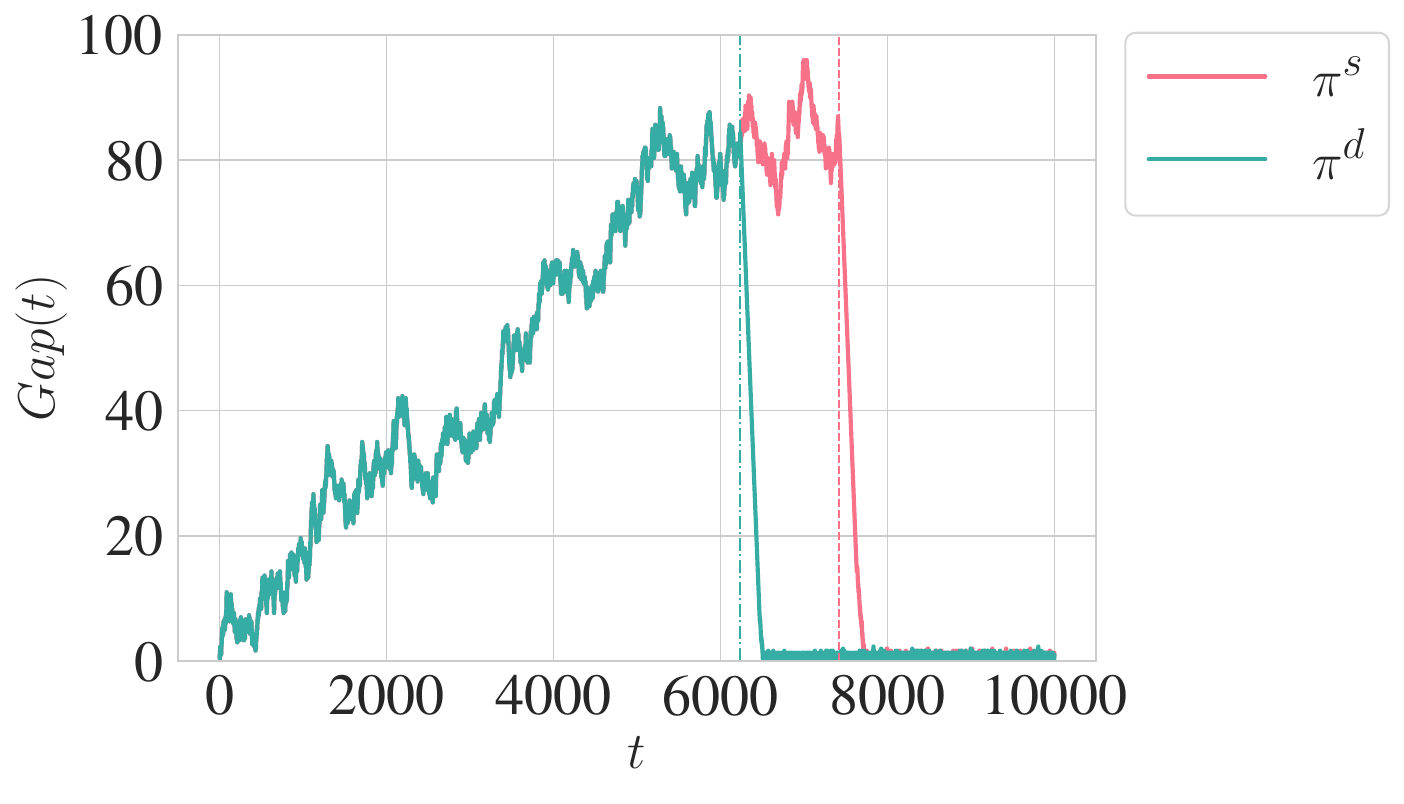}} \label{fig:bib-sample-1}}%
    \hfill 
    \subfloat[\centering Late start: $\Tstar > \That$]{{\includegraphics[width=0.45\textwidth]{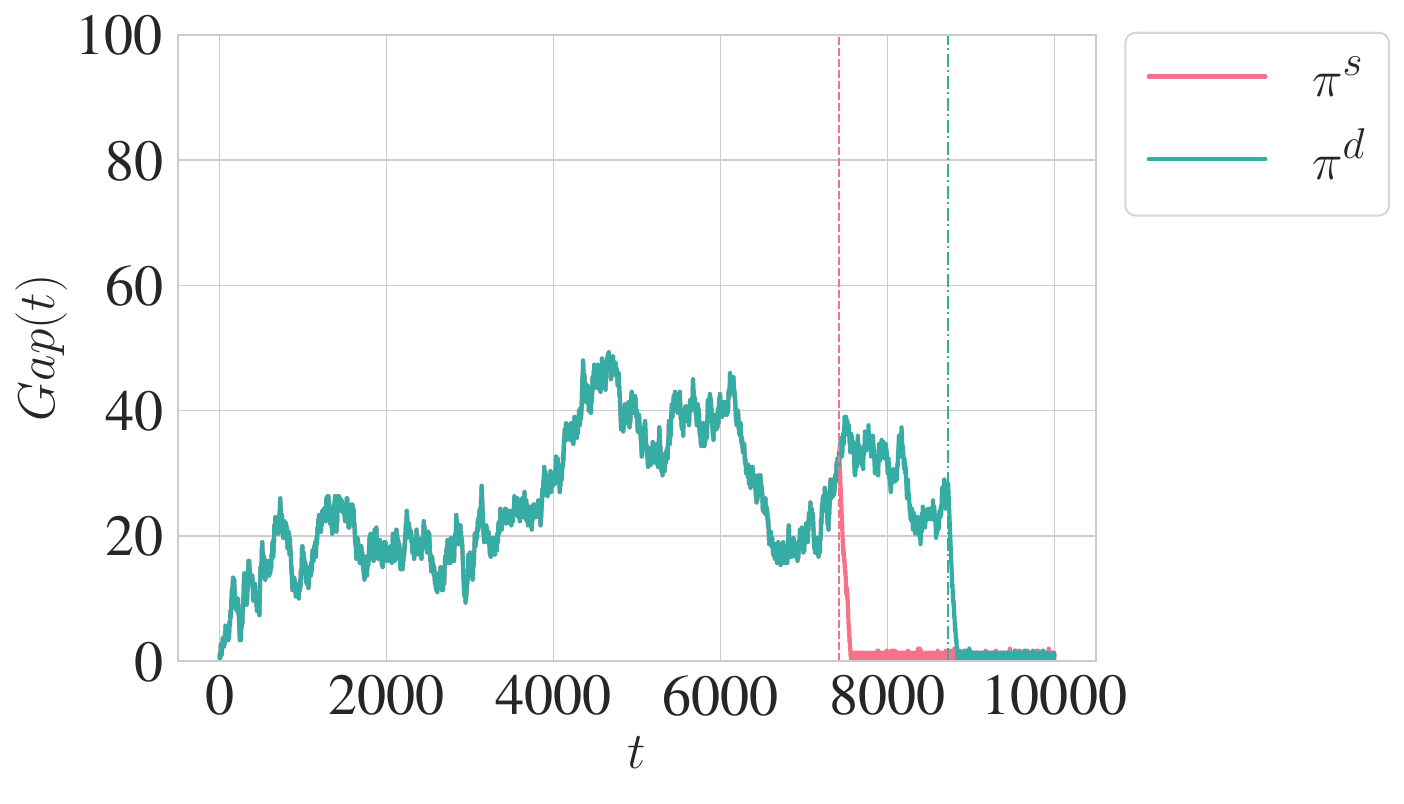} }\label{fig:bib-sample-2}}%
    \caption{\centering $Gap(t)$ versus $t$ under the static and semi-dynamic policies, for $N = 3, q = 1$, $a_s = {2\sqrt{2}{N\choose 2}}$, $a_d = \frac{1}{5{N\choose 2}}$.}
    \label{fig:bib-sample-paths}
\end{figure}

\begin{theorem} \label{thm:ball_semi2}
For any constant $a_d \in (0,1]$, the semi-dynamic policy flexes $\mathcal{O}(\sqrt{T})$ times in expectation, i.e., $\EE{M^d} \in \mathcal{O}\left(\sqrt{T}\right).$
\end{theorem}

To prove \Cref{thm:ball_semi2}, we use the threshold condition to show that bounding $\mathbb{E}[T-\Tstar]$ reduces to obtaining tight bounds on the gap of the no-flex policy at any time $t$. We provide a proof sketch of our approach below, deferring a complete proof to Appendix \ref{app:ball_proof_semi_2}. 

{
\paragraph{Proof sketch.}
Algebraic arguments yield the following useful bound on $\mathbb{E}[T-\Tstar]$:
\begin{align}
\mathbb{E}[T-\Tstar] \leq \sqrt{T}\sum_{k=1}^{\lceil\sqrt{T}\rceil}\mathbb{P}(T-\Tstar \geq (k-1)\sqrt{T}).
\end{align}
Namely, it suffices to show that the probability $T-\Tstar$ exceeds $(k-1)\sqrt{T}$, for some $k \in \{1,\ldots,\lceil\sqrt{T}\rceil\}$, is upper bounded by a constant.

Since the threshold condition is satisfied at $\Tstar$, the event \mbox{$T-\Tstar \geq (k-1)\sqrt{T}$} implies \mbox{$\Gapd(\Tstar) \geq \frac{a_d(k-1)\sqrt{T}q}{N} \geq \frac{a_d(k-1)\sqrt{\Tstar}q}{N}$}. Noting that \mbox{$\Gap^d(t) = \Gap^{nf}(t)$} for all $t \leq \Tstar$, since the policy only begins flexing after $\Tstar$ by construction, it then suffices to bound $\mathbb{P}\left(\Gap^{nf}(\Tstar) \geq c(k-1)\sqrt{\Tstar}\right)$, where $c = a_dq/N$. The result then follows by applying the Berry-Esseen theorem to bound the tail of the load of each bin under the no-flex policy by a standard normal distribution, for all $k \in \{1,\ldots,\lceil{\sqrt{T}}\rceil\}$. Summing over all $k$ completes the proof of the result.
\hfill\Halmos
}

\smallskip 

It remains to argue that beginning to exert flexibility only when absolutely necessary suffices to recoup the imbalance accumulated before $\Tstar$. \Cref{thm:ball_semi} establishes this fact below.
\begin{theorem} \label{thm:ball_semi}
For any $a_d \leq \frac{1}{5 {N\choose2}}$, the semi-dynamic policy achieves $\mathbb{E}\left[\Gapf(T)\right] \in \mathcal{O}(1).$
\end{theorem}

We defer the formal proof {of \cref{thm:ball_semi} to Appendix \ref{app:ball_proof_semi_1}, and provide a proof sketch below}.
{
\paragraph{Proof sketch.} 
The proof follows similar lines as that of \Cref{thm:ball_static}. Namely, we analyze the gap under two events: (i) the event $E^1$, under which the maximally and minimally loaded bins in period $T$, respectively denoted by $i$ and $j$, never had the same load between $\Tstar$ and $T$, and (ii) the event $E^2$, under which these two bins had the same load for some $\tau \in \{\Tstar,\ldots,T-1\}$. We then have the following bound on $\mathbb{E}[\Gapd(T)]$:
\begin{align*}
\mathbb{E}\left[\Gapd(T)\right] \leq \mathbb{E}\left[\Gapd(T) \mid E^1\right]\mathbb{P}(E^1) + \mathbb{E}\left[T-\tau\mid E^2\right]\mathbb{P}(E^2).
\end{align*}
While our analysis of the second term is identical to that of the static policy, with the likelihood that $E^2$ occurs being exponentially decreasing in $T-\tau$, the main challenge in this setting is bounding the first term, since the time at which the semi-dynamic policy begins flexing is random, with no explicit characterization. We tackle this obstacle by leveraging the threshold condition to first upper bound $\Gapd(T)$. In particular, we use the fact that $\Gapd(T) \leq \Gapd(\Tstar-1) + 1 + T-\Tstar$, since no more than $T-\Tstar+1$ balls can go into any bin between $\Tstar$ and $T$. Moreover, it must be that \mbox{$\Gapd(\Tstar-1) < \frac{a_d(T-\Tstar+1)q}{N}$}, by definition of $\Tstar$. Putting these two facts together, we have that $\Gapd(T)\leq \left(1 + T-\Tstar\right)\left(1+\frac{a_dq}{N}\right)$. Not only is this fact useful to bound the expected gap at time $T$, given $E^1$, but it will help us bound the likelihood that $E^1$ occurs. 

To bound this latter probability, we use the threshold condition to observe that $E^1$ occurs if the total number of balls that land in bin $j$ during the flexing horizon cannot correct an already accumulated imbalance of $a_dq(T-\Tstar)+a$ with respect to bin $i$, for some constant $a > 0$. We similarly bound this via two ``bad'' events: (a) that the number of random balls thrown into bin $i$ during the flexing horizon exceeds those thrown into bin $j$ by $a_dq(T-\Tstar)+a$, and (b) that the number of flexible balls thrown into bin $j$ exceeds those thrown into bin $i$ by no more than $2a_dq(T-\Tstar)+a$. 

As before, the analysis of the random throws is a simple application of Hoeffding's inequality. To analyze the flexible throws, we rely on the useful ``lazy'' allocation rule from the proof of \Cref{thm:ball_static}, and show that there exists a constant $t_0 > 0$ for which the probability that event (b) occurs is exponentially decreasing in $T-\Tstar$, as long as $T-\Tstar \geq t_0$, for small enough $a_d > 0$. For values of $\Tstar$ such that $T-\Tstar < t_0$, $\Gapd(T)$ is trivially upper bounded by a constant, thereby completing the proof of the theorem.
\hfill\Halmos
}

{
\begin{remark}\label{rem:non-uniform-bins}
In order to illustrate the fundamental power of late-stage flexibility, for simplicity we focus on the setting where the preferred bin of each arriving ball is drawn uniformly at random (i.e., with probability $1/N$). This assumption is most appropriate for settings where resource preferences or requirements are homogeneous. Our algorithms and analyses easily extend however to the heterogeneous setting, wherein each random ball is placed into bin $i \in [N]$ with constant probability $p_i \in (0,1)$. In this latter setting, the appropriate notion of balance is one in which each bin $i$ receives exactly $Tp_i$ balls by the end of the horizon.\footnote{This models the practical reality that, in settings where certain resources are known to be preferred over others, their capacities (e.g., staffing levels, in the workforce / delivery scheduling examples; initial inventory levels in the opaque selling example) are scaled according to the demand that they are expected to receive.}
Then, to achieve approximate end-of-horizon balance in this setting, one can define the {\it normalized} load of a bin under any policy $\pi$ to be $\bar{x}_i^\pi(t) = \tfrac{x_i^\pi(t)}{{N p_i}}$, and the gap of the system under this policy to be \mbox{$\overline{\Gap}^\pi(t) = \max_{i \in [N]}\bar{x}_i^\pi(t)-\frac{t}{N}$}. (Note that by letting $p_i = 1/N$ for all $i \in [N]$, we exactly recover that $\bar{x}_i^\pi(t) = x_i^\pi(t)$ and $\overline{\Gap}^\pi(t) = \Gap^\pi(t)$  in the homogeneous setting.) Our semi-dynamic policy can then be adapted by placing any flexible ball into the bin with the smallest {\it normalized} load, and letting the threshold condition be $\overline{\Gap}^\pi(t)\geq \frac{\bar{a}_d(T-t)q}{N}$ for an appropriately defined tuning parameter $\bar{a}_d$ depending on $(p_1,\ldots,p_N)$. Leveraging our analysis for the homogeneous setting, it is easy to show that for heterogeneous $p_i$ all results would continue to hold. We omit this analysis for brevity.
\end{remark}
}

\section{Application: Late-Stage Opaque Selling for Inventory Management}\label{sec:opaque_salop}

In this section we extend the insights {from \Cref{sec:balls} to an  inventory control problem.} In particular, recent work leveraged the balls-into-bins paradigm to quantify the inventory benefits of offering opaque products in retail settings \citep{elmachtoub2015retailing,elmachtoub2019value}. When opaque selling also improves a retailer's revenue through price discrimination, always offering an opaque product is clearly an appealing strategy. In this section, however, we show that there exist settings in which opaque selling may instead be {\it detrimental} to a retailer from a revenue perspective. We use this as motivation to build upon our contributions for the balls-into-bins model and design simple opaque selling strategies that yield strong inventory cost savings, all the while losing a small amount in revenue.

\subsection{Basic Setup}\label{ssec:opaque-setup}

\subsubsection*{Choice model.} We consider a retailer offering $\Ninv \geq 2$ horizontally differentiated (also referred to as {\it homogeneous}) product types to customers over an infinite horizon. For instance, these products may differ in color or style but are not inherently quality-differentiated (e.g., differently colored staplers or socks, as in \Cref{fig:opaque_example}). A customer arrives in each period and evaluates the products offered to her by the retailer. Generalizing the Hotelling model considered in \citet{elmachtoub2015retailing}, we assume that customers value all products according to the Salop circle model \citep{salop1979monopolistic}, a standard framework for modeling customer choice amongst horizontally differentiated products. {Specifically, each product $i \in [N-1]$ is defined by a numerical characteristic $x_i = i/N$; we use the convention that $x_N = 0$ for the $N$th product.} Each arriving customer has (i) a deterministic baseline value for obtaining any product, denoted by $\bar{v}$, and (ii) an idiosyncratic preferred product characteristic $X \sim U[0,1]$. Her valuation for product \mbox{$i \in [N]$} is then given by \mbox{$V_i = \bar{v}-\gamma\cdot d(X,x_i)$}, where \mbox{$d(X, x_i) = \min\left\{|X - x_i|, 1 - |X - x_i|\right\}$} denotes the arc distance between the customer's preferred product characteristic and that of product $i$, and $\gamma \in [0,\bar{v}\cdot N]$ is a known constant.\footnote{The upper bound on $\gamma$ ensures that $\overline{v} - \gamma/N \geq 0$, i.e., the products are sufficiently attractive that a customer may be incentivized to purchase at least one non-preferred product. \citet{elmachtoub2015retailing} similarly makes such an assumption on the corresponding Hotelling model, for $N = 2$.} Given that products are homogeneous, we assume the retailer sets a single revenue-maximizing price $\hat{p}$ for each individual product $i\in [N]$; we derive $\hat{p}$ later on in \Cref{lem:purchase_probabilities}. Finally, without loss of generality we assume that the marginal cost of selling each unit is zero.\footnote{This assumption is without loss of generality for our theoretical results, as we will see that customers purchase exactly one product with probability one in this setting. While marginal costs may impact the effectiveness of our policies when the demand lift from the opaque discount also increases production costs, our numerical experiments show that our insights continue to hold even in settings where (i) there is a positive marginal cost and (ii) customers may choose not to purchase any item.}

\begin{figure}[t]
    \centering
    \includegraphics[width=0.35\textwidth]{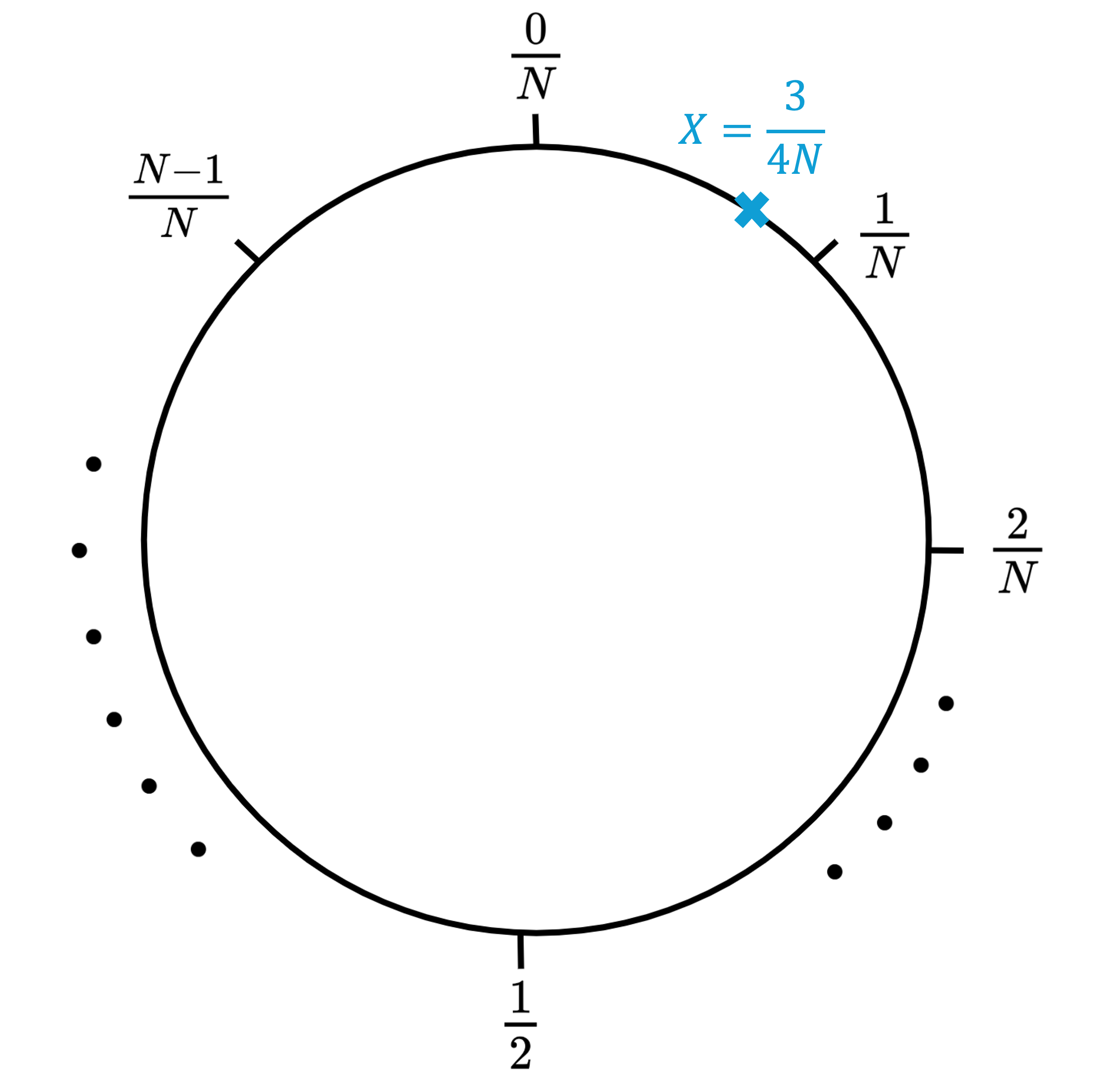}
    \caption{\centering Illustration of the Salop circle model. A customer with $X = \frac{3}{4N}$ will always purchase product $i = 1$ at revenue-maximizing price $\hat{p}$. Moreover, she will purchase the opaque option for any $\delta \geq \frac{\gamma}{4}\left(1-\frac1N\right)$.}
    \label{fig:salop}
\end{figure}

Beyond selling the products individually, the retailer may offer an opaque product (also referred to as an opaque {option}) to customers, as shown in \Cref{fig:opaque_example}. The opaque product gives the retailer the freedom to allocate any of the $N$ products to the customer; in exchange for this uncertainty (in particular, the likelihood that they may receive one of $N$ products, whose characteristics may be far from their preferred characteristics), customers purchase the product at a discounted price. Let $p_o = \hat{p}-\delta$ denote the price of the opaque option, where \mbox{$\delta \in (0,\hat{p}]$} is fixed. We assume customers are risk-neutral, as in \citet{jiang2007price, fay2008probabilistic, jerath2010revenue, fay2015timing, elmachtoub2015retailing, elmachtoub2021customerflex}, and that they value the opaque option as the average of all non-opaque products. Formally, letting $V_o$ denote a customer's random valuation for the opaque product, we have \mbox{$V_o = \frac1N\sum_{i=1}^{N}V_i$}.\footnote{Note that this is equivalent to saying that customers assume they will receive each product uniformly at random.}

In the absence of the opaque option, the customer purchases a product that maximizes her utility, given by $V_i-\hat{p}$ for $i \in [N]$, breaking ties lexicographically. If all products generate negative utility for the customer, she purchases nothing. Since $\bar{v}, \gamma$, and $\hat{p}$ are common to all products, this is equivalent to the customer choosing the product that minimizes the arc-distance $d(X,x_i)$. In the presence of the opaque product, the customer compares the utility generated by each individual product to that of the opaque option, given by $V_o-p_o$. If \mbox{$V_o-p_o \geq \max\{ \max_{i\in[N]}V_i-\hat{p},0\}$}, the customer purchases the opaque option. We illustrate the purchase decision for a customer with preferred product characteristic $X = \frac{3}{4N}$ in \Cref{fig:salop}.

Finally, for expositional simplicity, we assume $N$ is even in the remainder of this section. {This restriction is due to the fact that the revenue-maximizing price under the Salop model differs based on whether or not $N$ is even. However, all of our analysis and insights go through with $N$ odd.}

\subsubsection*{Inventory model.} In addition to the revenue generated from product sales, the retailer incurs costs for managing product inventories. Specifically, in each period, the retailer incurs a holding cost $h > 0$ per unit of on-hand inventory. Additionally, as in \citet{silver1965some,ignall1969optimal,elmachtoub2015retailing,elmachtoub2019value}, we assume that the retailer jointly replenishes her stock of all products whenever the inventory of any product is depleted to 0, incurring a fixed replenishment cost of $K > 0$. We refer to the time between consecutive replenishments as a {\it replenishment cycle}, and let $S \in \mathbb{N}^+$ be the inventory level of each product at the beginning of each replenishment cycle.

\subsubsection*{Performance metrics and trade-offs.} The goal of the retailer is to design simple opaque selling strategies that (i) achieve high long-run average revenue, and (ii) incur low long-run average inventory costs. To formalize this, we introduce some additional notation. We define a policy $\pi$ to be a mapping from the current state of the system (i.e., the inventory levels of all products) to the decision of whether or not to offer the opaque product. For $t \in \mathbb{N}^+$,  let $\omega^\pi(t)$ denote this decision, with $\omega^\pi(t) = 1$ if the product is offered, and $\omega^\pi(t) = 0$ otherwise. We moreover let $q_i^\pi(t)$ be the probability that a customer arriving in period $t$ purchases individual product $i \in [N]$, and denote by $q_o^\pi(t)$ the probability that the opaque product is purchased by the customer (for ease of notation, we omit the dependence of these quantities on~$\delta$ and $\hat{p}$). If the customer purchases the opaque product in period $t$, {we restrict} the retailer {to choosing} a random subset of two products, denoted by $\mathcal{F}(t)$, and allocating the product in $\mathcal{F}(t)$ with higher remaining inventory, breaking ties lexicographically. {We defer a discussion of this restriction, which only strengthens our insights, to our discussion of modeling assumptions at the end of the section.}

Finally, we use $\perperiodrev^\pi(t)$ to denote the expected revenue in period $t$ induced by the decision made by $\pi$. Notice that, absent inventory considerations, it is a priori unclear why the retailer would want to offer an opaque product, given the revenue loss associated with the incentivizing discount $\delta$.  We formalize the intuition that offering the opaque product is costly to the retailer via \Cref{lem:purchase_probabilities,lem:long_run_revenue} below.

{
\begin{proposition}\label{lem:purchase_probabilities}
The revenue-maximizing price in each period is given by $\hat{p} = \bar{v}-\frac{\gamma}{2N}$. Moreover,
\begin{enumerate}[label=(\roman*)]
    \item in periods where the opaque product is not offered, 
    \[q^\pi_i(t) = \frac1N \ \forall \ i \in [N] \qquad \text{ and } \qquad \perperiodrev^\pi(t) = \bar{v}-\frac{\gamma}{2N}\]
    \item in periods where the opaque product is offered:
    \begin{equation*}
      q^\pi_o(t) =
        \begin{cases}
          0 & \text{if } \delta \in \squarebracket{0,\parenthesis{\frac{1}{4} - \frac{1}{2N}} \gamma}\\
          1 - \frac{N}{2} + \frac{2N \delta}{\gamma} & \text{if } \delta \in \parenthesis{\parenthesis{\frac{1}{4} - \frac{1}{2N}} \gamma, \frac{1}{4} \gamma}\\
          1 & \text{if } \delta \geq \frac{1}{4} \gamma
        \end{cases}
        \qquad \text{and} \qquad q^\pi_i(t) = \frac{1-q^\pi_o(t)}{N} \ \forall \ i \in [N],
    \end{equation*}
    with $\perperiodrev^\pi(t) = \overline{v} - \frac{\gamma}{2N} - \delta \cdot q^\pi_o(t)$.
\end{enumerate}
\end{proposition}
}

\begin{figure}[t]
    \centering
    \subfloat[\centering $q_o^\pi(t)$ vs. $\delta$]{{\includegraphics[width=0.45\textwidth]{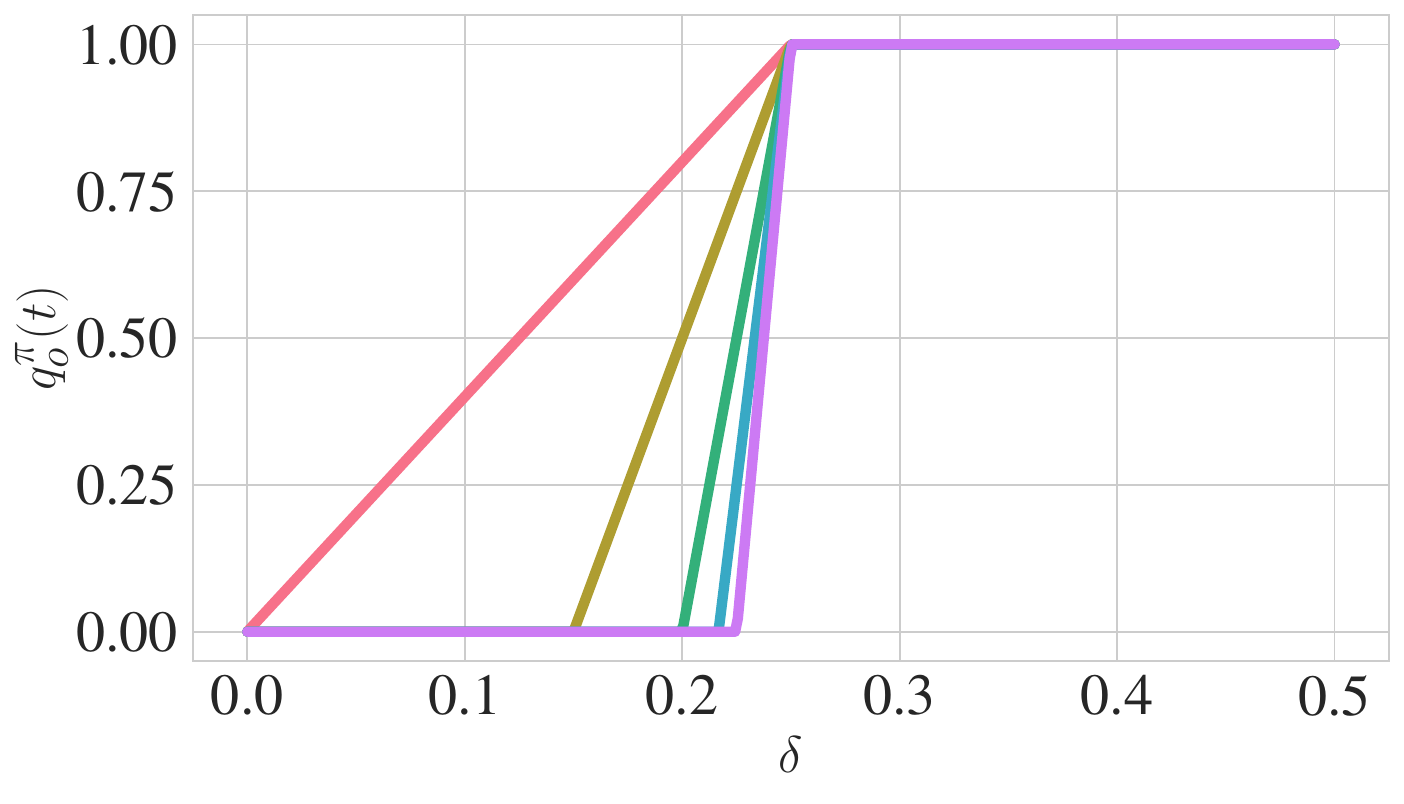} } \label{fig:qo}}%
    \quad 
    \subfloat[\centering $\mathcal{R}^\pi(t)$ vs. $\delta$]{{\includegraphics[width=0.45\textwidth]{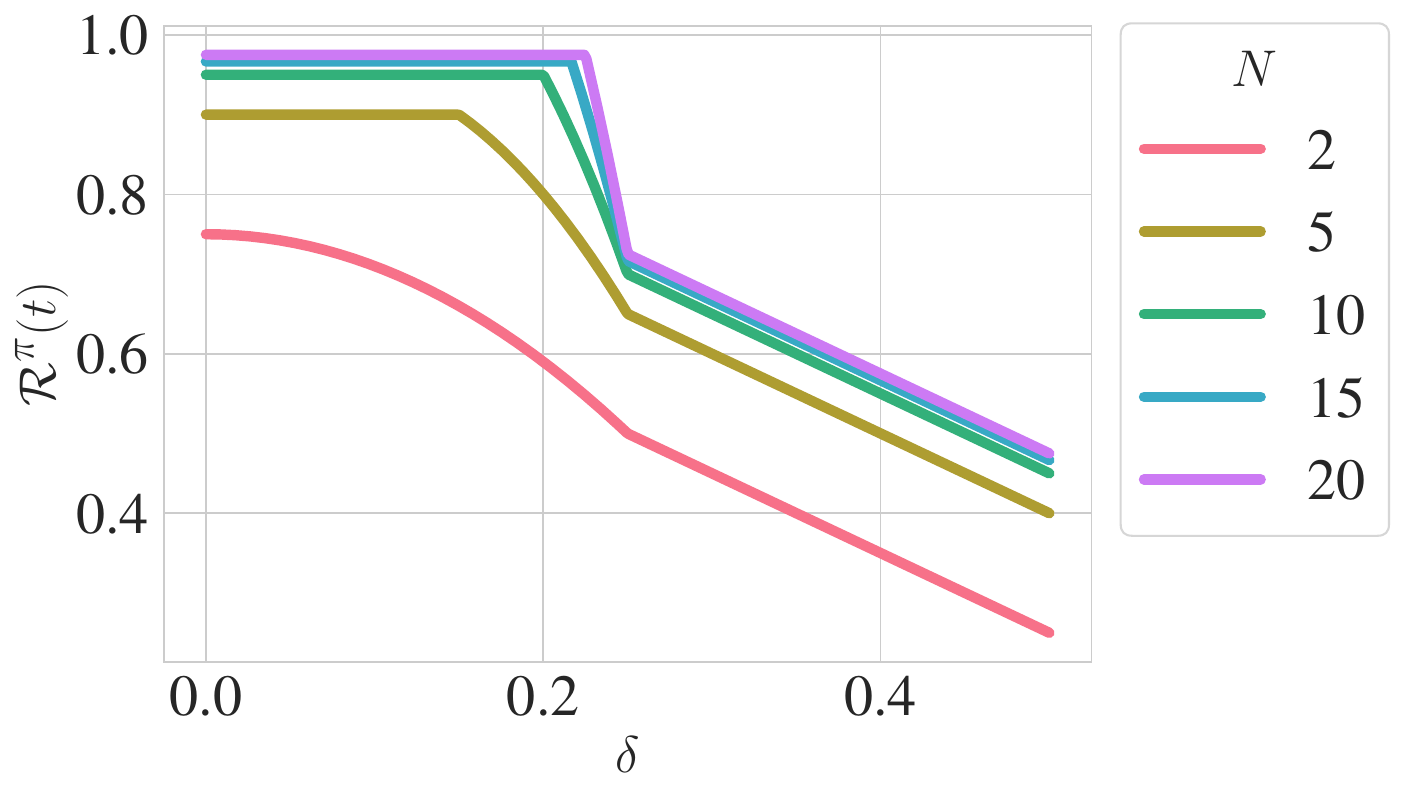} } \label{fig:ro}}%
    \caption{\centering Dependence of $q_o^\pi(t)$ and $\mathcal{R}^\pi(t)$ on $\delta$ and $N$, for $\bar{v} = \gamma = 1$.}
    %\caption{\centering Costs of Policies under Different Regimes of $\delta$}%
    \label{fig:qo-ro}%
\end{figure}

We illustrate Part (ii) of \Cref{lem:purchase_probabilities} in \Cref{fig:qo-ro}, deferring the proof of the fact to Appendix \ref{app:purchase_probabilities}. In \Cref{fig:qo} we observe that, as $N$ increases, the threshold past which customers begin purchasing the opaque product with positive probability increases;
moreover, for $\delta \in \parenthesis{\parenthesis{\frac{1}{4} - \frac{1}{2N}} \gamma, \frac{1}{4} \gamma}$, the purchase probability $q_o^\pi(t)$ is linearly increasing in $\delta$. These two facts together imply that, as $N$ grows large, $q_o^\pi(t)$ approaches a step function: for any $\delta \leq \gamma/4$, no one chooses the opaque product; past this point however, everyone chooses the opaque product. \Cref{fig:ro} illustrates the implications of these factors for the retailer's revenue. In particular, note that the retailer's maximum possible revenue is achieved when no one purchases the opaque product (i.e., for $\delta \leq \parenthesis{\frac{1}{4} - \frac{1}{2N}} \gamma$). Between this point and $\gamma/4$, her loss is quadratic in $\delta$; finally, past $\gamma/4$ her revenue is linearly decreasing, since everyone purchases the opaque product. This also highlights that there is no incentive for the retailer to ever offer a discount higher than $\gamma/4$. As a result, in the remainder of this section we assume $\delta \in \left(\left(\frac14-\frac{1}{2N}\right)\gamma, \min\left\{\frac14\gamma,\hat{p}\right\}\right]$, with $\bar{v} > \frac{\gamma}{4}$ in order to ensure that this interval is non-empty. Finally, throughout the section we abuse notation and define \mbox{$q_o := 1-\frac{N}{2}+\frac{2N\delta}{\gamma}$}.

Having established the retailer's per-period revenues with and without the opaque option, \Cref{lem:long_run_revenue} provides a closed-form expression for the long-run average revenue of any opaque selling policy $\pi$, denoted by $Rev^\pi$. We denote by $R^\pi$ the random variable representing the length of a replenishment cycle under $\pi$, and by $M^\pi$ the number of times the opaque product is purchased in a replenishment cycle. 
\begin{proposition}\label{lem:long_run_revenue}
    For a fixed policy $\pi$, the retailer's long-run average revenue is given by 
    \begin{align}\label{eq:lr-avg-rev}
    {Rev}^\pi = \hat{p} - \delta \cdot \frac{\EE{M^\pi}}{\mathbb{E}[\R^\pi]}.
    \end{align}
\end{proposition} 
We defer the straightforward proof of \Cref{lem:long_run_revenue} to Appendix \ref{app:long_run_revenue}. Noting that $Rev^\pi$ attains its maximum when $M^\pi = 0$, \Cref{lem:long_run_revenue} underscores that the retailer should never offer the opaque product if her only goal is to maximize revenue. With that said, the loss in revenue from offering the opaque option may be justified if the opaque selling strategy results in inventory cost savings, as observed in \citet{elmachtoub2015retailing,elmachtoub2019value}. To see why this would be the case, notice that exercising the opaque option gives the retailer additional control over product inventories, since she has the freedom to allocate a specific product to the current customer under this option. By allocating products with higher inventory levels, the retailer can delay the time to replenishment (since each individual product's inventory level will deplete more slowly), yielding lower replenishment costs overall. \Cref{eq:lr-cost} below (derived in  \citet{elmachtoub2019value}, Equations (1) and (2)) provides a closed-form expression for the retailer's long-run average inventory costs, denoted by $Inv^\pi$, thereby helping to formalize this intuition:
\begin{align}\label{eq:lr-cost}
    {Inv}^\pi = \frac{K}{\mathbb{E}[\R^\pi]} + \frac{h}{2}\left(2\Ninv \Sinv + 1 - \frac{\mathbb{E}[(\R^\pi)^2]}{\mathbb{E}[\R^\pi]}\right).
\end{align}
For ease of notation, we let $\mathcal{K}^\pi = \frac{K}{\mathbb{E}[\R^\pi]}$ and $\mathcal{H}^\pi =  \frac{h}{2}\left(2\Ninv \Sinv + 1 - \frac{\mathbb{E}[(\R^\pi)^2]}{\mathbb{E}[\R^\pi]}\right)$. 

 \Cref{eq:lr-avg-rev,eq:lr-cost} together recover the intuition that a ``good'' policy trades off between exercising the opaque option frequently enough to keep inventory levels approximately balanced, thus ensuring long replenishment cycles in expectation, but not so frequently as to incur large revenue losses. In what follows, we will evaluate the performance of various opaque selling strategies in a ``large-inventory'' limit in which $N$ is fixed and $S$ grows large. It will moreover be useful to interpret $K \in \Theta(S)$ and $h \in \Theta\parenthesis{\frac1S}$ for our theoretical results, so that the replenishment cost per unit and the total holding cost per period are constants.\footnote{One can alternatively view this as a regime in which the holding cost per period $h$ is small relative to the replenishment cost $K$, as is expected in practice.}

\subsubsection*{Discussion of modeling assumptions.} Before presenting our results, we discuss our most important modeling assumptions for this section. 
\paragraph{On the Salop circle model.} As alluded to above, the Salop circle model is a common modeling framework in settings where the retailer offers horizontally differentiated products, such as differently-colored shirts. Moreover, not only does it generalize the Hotelling model considered in the majority of existing works on opaque selling when $N = 2$ \citep{jiang2007price, fay2008probabilistic, jerath2010revenue,elmachtoub2015retailing}, but, joint with the assumption that customers are risk-neutral, it also is one of the simplest models for which there are {no} revenue gains from opaque selling. As a result, the clear-cut trade-off between revenue and inventory costs makes it a prime candidate to illustrate the relevance of our insights, i.e., that (i) there exist settings in which exerting flexibility is costly, and (ii) {\it judiciously} exerting flexibility in load balancing settings can generate almost all of the benefits of a perfectly balanced load, at small cost to the decision-maker. With that said, our results themselves rely fairly minimally on the Salop circle model. In particular, in \Cref{rem:min-asp} at the end of this section we highlight the minimal assumptions on the customer choice model required for our results to hold. Moreover, in \Cref{sec:opaque_extensions} we demonstrate via extensive numerical experiments the robustness of our insights to the choice model, by considering the popular multinomial logit choice model, as well as {risk-averse and risk-seeking customer behavior}.

\paragraph{On the control levers.} An important assumption in our setting is that the retailer can only influence customer choice by offering the opaque product. Specifically, all products share the same price; moreover, this price and the opaque discount are fixed at the beginning of time. While the fact that all products share the same price is without loss of generality for our analytical results, since products are homogeneous, in our numerical experiments we show that our insights are robust to settings in which products are heterogeneous and the decision-maker implements a {\it discriminatory pricing} (DP) policy, which fixes different prices for different products. 

{We additionally note that a common and well-studied practice to address supply-demand imbalances in retail settings is dynamic pricing, wherein the retailer adaptively varies the prices of products as a function of their inventory levels. Here, we focus on a separate class of policies that address this issue, i.e., policies that set a {\it static} price for all products, and begin offering an opaque option at some point in time. While we do not seek to argue that implementing dynamic pricing is always unreasonable, in many settings (especially when products are homogeneous), frequent price changes may be viewed unfavorably by customers. The addition of a discounted product, however, is less likely to be contentious. Additionally, \citet{elmachtoub2015retailing} numerically showed that policies that fix a static price and offer an opaque option in each period frequently outperform dynamic pricing strategies. Thus motivated, we restrict our benchmarks to opaque selling strategies that fix a single price throughout time.}

We also note that we do not study the {joint} optimization of the fixed price~$\hat{p}$ and the opaque discount~$\delta$. Since we study an asymptotic regime in which~$S$ grows large and~$\delta$ is constant, our main results continue to hold for any (optimized) $\delta$ that is bounded away from 0. However, by focusing on the asymptotics we hide potentially important dependencies on $\delta$ that would be required for this joint optimization. With that said, in light of the fact that we assume that $\hat{p}$ and $\delta$ are set separately, letting $\hat{p}$ be the revenue-maximizing price is a natural choice. Moreover, our numerical experiments provide guidance for choices of $\delta$ that generate high profits for the retailer, given $\hat{p}$.

Finally, recall that we restrict the retailer to allocate one of two products chosen uniformly at random if the customer purchases the opaque product. This restriction is for expositional purposes, in order to maintain consistency with the balls-into-bins strategies presented in \Cref{sec:balls}; we formalize this connection in \Cref{sec:bib-to-opaque}. While sampling two products at random may be desirable in settings where $N$ is large {and querying all $N$ inventory levels is costly from a systems perspective} (as motivated by the power-of-two-choices paradigm in balls-into-bins), in many settings it may be more desirable to allocate the product with the highest remaining inventory across all $N$ products, henceforth referred to as the $N$-sample setting. The algorithm we propose in \Cref{sec:bib-to-opaque} can be directly adapted to such a setting. Moreover, it is easy to see that the two-sample setting constitutes a lower bound on the performance of our policies for the $N$-sample setting. Since our results will show that the two-sample algorithm is asymptotically optimal in the large-inventory regime we consider, extending to $N$-sampling only strengthens the insight that late-stage semi-adaptive opaque selling can yield gains for the retailer.

\paragraph{On the metrics.} In line with our results for the vanilla balls-into-bins model, our main focus is on characterizing the trade-off between revenue and inventory costs for various opaque selling strategies, as opposed to a single objective, such as profit. The reason for this is two-fold. From a technical perspective, obtaining a closed-form expression for the profit of any given policy proves to be a challenge analytically, given the asymptotic regime in which we situate ourselves. Moreover, analyzing revenue and inventory costs separately allows us to obtain finer-grained insights into the value a retailer derives from exercising the opaque option. While we obtain sufficient conditions on regimes for which the semi-dynamic policy outperforms all benchmark policies in theory, our computational experiments moreover show its strong performance over a wide variety of randomly generated instances.

\subsection{Benchmark Policies}\label{ssec:opaque-benchmarks}

Having described the model, we now analyze two policies against which we will benchmark our algorithm: (i) the no-flex policy $\pi^{nf}$, which never offers the opaque option to customers, and (ii) the always-flex policy $\pi^a$, which offers it in each period.\footnote{The always-flex policy is equivalent to the $N$-opaque selling strategy considered in \citet{elmachtoub2019value}.} As before, we use the superscripts $nf$ and $a$ to refer to metrics under the no-flex and always-flex policy, respectively.

Intuitively, the no-flex and always-flex policies lie on opposite extremes of the revenue-inventory costs trade-off curve: while the no-flex policy achieves the maximum possible revenue, it exerts no control over inventory levels; the always-flex policy, on the other hand, suffers substantial revenue losses by offering the opaque option in each period, but should incur low inventory costs due to balanced inventory levels throughout the horizon (and consequently long replenishment cycles). The impact of these two policies on replenishment cycle length is illustrated in \Cref{fig:inventory_path}, which 
plots a sample path of the minimum inventory level under the always-flex and no-flex policies (the ``flex-$\sqrt{S}$'' policy $\pi^f$ is later discussed in \Cref{rmk:elmachtoub2015}). In particular, we observe that $\pi^{nf}$ leads to replenishment cycles that are on average much shorter than those under $\pi^a$, with periods in which the minimum inventory level decreases steeply, as customers' random purchases deplete the product with the lowest inventory. The minimum inventory level under the always-flex policy, on the other hand, looks staircase-like, as the only time the product with the minimum inventory level is allocated to a customer is when inventory levels are perfectly balanced, thereby yielding replenishment cycles that are on average 10 periods longer.

\begin{figure}[t]
    \centering
    \includegraphics[width=0.8\textwidth]{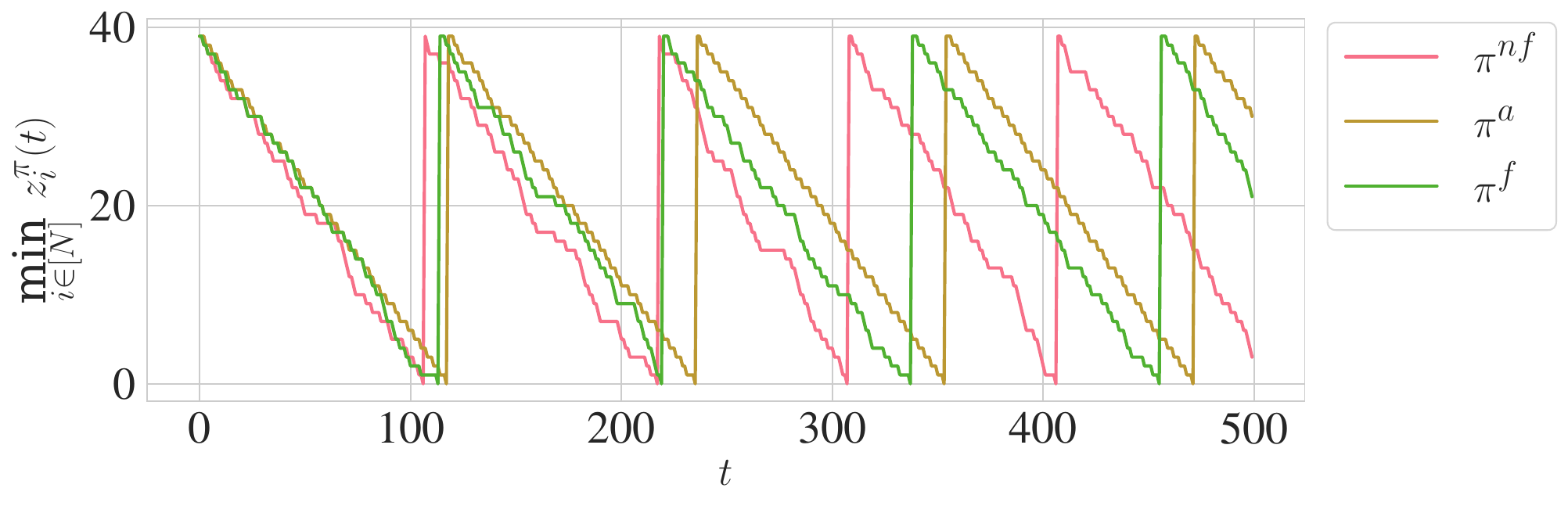}
    \caption{\centering Minimum inventory level, denoted by $\min_{i\in[N]}z_i^\pi(t)$, under $\pi^{nf}, \pia$ and the ``flex-$\sqrt{S}$" policy $\pi^f$ which offers the opaque product uniformly at random in $\Theta\parenthesis{\frac{1}{\sqrt{S}}}$ of the periods (see \Cref{rmk:elmachtoub2015}). Here, $S = 40, N = 3$ and $q_o = 1.$ Periods in which the minimum inventory level increases to 40 signify the start of a new replenishment cycle.}
    \label{fig:inventory_path}
\end{figure}

\Cref{prop:nf_objective,prop:af_objective} formalize this intuition. We defer their proofs to Appendix \ref{app:benchmark}.

\begin{proposition}\label{prop:nf_objective}
Under the no-flex policy $\pi^{nf}$, the following holds:
\begin{enumerate}[label=(\roman*)]
    \item $\mathbb{E}[M^{nf}] = 0$ and $\mathbb{E}[\R^{nf}] = NS - \theta_{nf}$, where $\theta_{nf} \in \Omega(\sqrt{S})$.
    \item ${Rev}^{nf} = \hat{p}$ and ${Inv}^{nf} %= \frac{K}{NS-\Omega(\sqrt{S})}+\frac{h}{2}\frac{(NS)^2-\frac{NS}{2} \thetan + \mathcal{O}(N^2S)}{NS - \thetan} 
    = \frac{K}{NS} + {\frac{h}{2} \cdot NS} %+ K \cdot \Omega\parenthesis{\frac{1}{S^{3/2}}} + h \cdot \Omega\parenthesis{\sqrt{S}}
    +\xi_{nf}$, where $\xi_{nf} \in \Omega\parenthesis{\frac{1}{\sqrt{S}}}$.
\end{enumerate}
\end{proposition}

\begin{proposition}\label{prop:af_objective}
Under the  always-flex policy $\pi^{a}$ with $\delta \in \Theta(1)$\footnote{Since $\delta > \parenthesis{\frac{1}{4} - \frac{1}{2N}}\cdot \gamma$, the condition that $\delta \in \Theta(1)$ is naturally satisfied when $N > 2.$}, the following holds: 
\begin{enumerate}[label=(\roman*)]
    \item $\mathbb{E}[M^{a}] \in \Theta(S)$ and $\mathbb{E}[\R^{a}] = NS - \theta_{a}$, where $\theta_{a} \in \mathcal{O}(1)$.
    \item ${Rev}^{a} = \hat{p} - \delta \left({1-N/2+2N\delta/\gamma}\right)$ and ${Inv}^{a} = \frac{K}{NS} + {\frac{h}{2} \cdot NS} + \xi_{a}$, where $\xi_{a} \in \mathcal{O}\parenthesis{\frac{1}{S}}$.
\end{enumerate}
\end{proposition}

Since the retailer stocks exactly $NS$ products at the beginning of each replenishment cycle, $NS$ is a loose upper bound on the maximum possible length of a replenishment cycle, which we refer to as the {\it ideal} replenishment cycle length. As a result, the constant $\frac{K}{NS} + {\frac{h}{2} \cdot NS}$ can be thought of as a theoretical lower bound on the retailer's inventory costs. We henceforth refer to this quantity as the {\it ideal} inventory cost. Using this interpretation, \Cref{prop:nf_objective} establishes that the stochasticity in customers' purchase decisions without the opaque option results in a decrease in expected cycle length of $\Omega(\sqrt{S})$ relative to the ideal replenishment cycle, leading to a long-run average inventory cost increase of $\Omega\left(\frac{1}{\sqrt{S}}\right)$ relative to this theoretical lower bound. \Cref{prop:af_objective}, on the other hand, establishes that exerting the opaque option in each period results in a cycle length that is within $\mathcal{O}(1)$ of the ideal replenishment cycle length, with inventory costs within $\mathcal{O}\parenthesis{\frac1S}$ of the ideal inventory cost. However, this comes at a constant per-period revenue loss. Notice that this latter loss far outweighs the inventory cost savings from offering the opaque option in each period. As a result, from a profit perspective it is clear that in this homogeneous setting, the retailer should not offer the opaque product in each period, for any value of $\delta \in \Theta(1)$.

Motivated by these extremes, we seek to design policies that achieve a two-fold objective, as it relates to the revenue-inventory cost tradeoff: (i) incurring $o(1)$ loss relative to the revenue-maximizing solution that never offers the opaque option, and (ii) incurring order-wise identical inventory savings as the always-flex policy that has the most control over inventory levels, i.e., $\mathcal{O}\left(\frac1S\right)$ loss relative to the ideal inventory costs.

\begin{remark}\label{rmk:elmachtoub2015}
    A natural idea to reduce the revenue loss incurred from offering the opaque product would be to set $\delta \in \mathcal{O}\parenthesis{\frac{1}{\sqrt{S}}}$, so that a customer only purchases the opaque product with probability $\mathcal{O}\parenthesis{\frac{1}{\sqrt{S}}}$, as suggested in Proposition 4 of \citet{elmachtoub2015retailing} for the special case of $N = 2$. This idea, however, does not achieve the desired trade-off in our model, for two reasons. First of all, when $N > 2$, \Cref{lem:purchase_probabilities} establishes that $\delta > \parenthesis{\frac{1}{4} - \frac{1}{2N}} \gamma \in \Theta(1)$ is required for the opaque product to be chosen with strictly positive probability. Moreover, independent of the behavioral model, {a ``flex-$\sqrt{S}$" policy $\pi^f$} that exerts flexibility uniformly at random in $\Theta\parenthesis{\frac{1}{\sqrt{S}}}$ of the periods will not guarantee that the expected replenishment cycle length is a constant away from the ideal cycle length $NS$; as a result, it will not be able to achieve the inventory cost savings of the same order as $Inv^a$, our second desideratum. Intuitively, this is due to the fact that, for any constant $a > 0,$ a policy that exerts flexibility randomly in $\frac{a}{\sqrt{S}}$ fraction of periods will not be able to approximately balance the system if its gap ever exceeds $(a+1) \sqrt{S}$; this event occurs with constant probability for any $a$. We illustrate this phenomenon in \Cref{fig:inventory_path}, which shows that while replenishment cycles under $\pi^f$ are longer than those under the no-flex policy, they still fall short of those under $\pia$. We provide a more comprehensive inventory cost and revenue analysis of this policy in our numerical experiments (see \Cref{sec:opaque_extensions}).
\end{remark}

\subsection{Leveraging Balls-Into-Bins for Effective Late-Stage Opaque Selling Strategies}\label{sec:bib-to-opaque}

In this section we leverage the insights from \Cref{sec:balls} to design opaque selling strategies that appropriately trade off between revenue and inventory costs. To do so, we first specify the analogy to the balls-into-bins framework, first identified in \citet{elmachtoub2019value}. Here, bins correspond to product types, and balls correspond to customers. Every time a customer purchases a product (thereby depleting its inventory by one), a ball is allocated to a bin (thereby increasing the bin's load by one);  a customer purchasing an opaque product is therefore analogous to allocating a flexible ball. Moreover, keeping inventory levels balanced in the opaque selling model is analogous to the goal of maintaining a balanced load across bins. In the opaque selling model, we are similarly penalized for flexing too frequently given the dependence of the long-run average revenue on the expected number of flexes, $\mathbb{E}[M^\pi]$ (see \Cref{eq:lr-avg-rev}). 

Despite this fairly straightforward analogy, there exist important distinctions between the two settings. First of all, while the balls-into-bins setting has a fixed horizon of length $T$, the horizon in the opaque selling model is the replenishment cycle, whose length is not only unknown, but more importantly {\it endogenous} to (i) the random fluctuations in customers' purchases, and (ii) the retailer's opaque selling decisions. As a result, naively adapting the dynamic policy designed for the balls-into-bins setting (see \Cref{alg:ball_full}) to this setting first would require the algorithm to maintain a prediction of the time at which the replenishment cycle ends. More fundamentally, however, this endogeneity issue makes it a priori unclear that such ``point of no return''-style policies, which exercise the opaque option extremely infrequently, suffice to induce long replenishment cycles. Finally, we highlight the subtle difference in the way in which flexing is penalized in the opaque selling model, relative to the vanilla balls-into-bins setting. In particular, \Cref{eq:lr-avg-rev} establishes that the retailer's long-run average revenue is decreasing in the expected number of opaque purchases, {\it for a fixed expected replenishment cycle length}. However, as $M^\pi$ increases, so does $R^\pi$. On the one hand, this renders the exact dependence of the revenue loss on the number of opaque purchases less straightforward; on the other, it makes clear the importance of policies that {\it efficiently} exercise the opaque selling option (i.e., in a way that maximizes the bang-per-buck of the cost of an opaque discount per increase in replenishment cycle length). As we saw in \Cref{prop:af_objective}, the always-flex policy fails this litmus test, given that it offers the opaque product to customers in each period, even when inventory levels are approximately balanced.

\subsubsection*{The semi-dynamic policy.} In our main result for this section, we propose a variant of the dynamic policy designed for the balls-into-bins setting that addresses these subtleties. Specifically, we consider an analogous notion of system balancedness for product inventories, similarly referred to as the {\it gap} of the system and defined as:
\begin{align}\label{eq:inv-gap1}
\Gap_I^{\pi}(t) = S - \frac{t}{N} - \min_{i \in [N]} z_i^{\pi}(t), \quad \forall\, t \in [\R^\pi],
\end{align}
where $\stateinv_i^{\pi}(t)$ denotes the remaining inventory of product $i$ in period $t$ under policy $\pi$, and $t$ is re-initialized at the beginning of each replenishment cycle. Intuitively, the term $S-t/N$ captures the remaining inventory for each product if the previous realized demand had been split equally across products. Thus, $\Gap_I^{\pi}(t)$ captures the deviation between the current inventory level for product $i$, $z_i^{\pi}(t)$, and a perfectly balanced state of the system.

As in \Cref{alg:ball_full}, the semi-dynamic policy triggers the opaque option the first time $\Gap_I^{\pi}(t)$ reaches a ``point of no return.'' To formalize this point of no return, we define a fictitious end of horizon, denoted by $T:=N(S-1)+1$, corresponding to the length of the replenishment cycle if the retailer balanced inventory in each period. The quantity $T$ is a tight upper bound on the maximum cycle length; we henceforth refer to it as the {\it achievable-ideal} cycle length.\footnote{To see this, note that it takes $N(S-1)$ periods for all products to reach an inventory level of one. In the next period, no matter which product is purchased, its inventory level will then drop to 0, and a replenishment will occur.} Under the semi-dynamic policy, the retailer verifies the following threshold condition in each period:
\begin{align}\label{eq:threshold-opaque}
\Gap_I^d(t) \geq \frac{c_d(T-t)q_o}{N},
\end{align}
where $c_d > 0$ is a tuning parameter. If this condition is satisfied, the opaque option is offered to customers until the end of the replenishment cycle. Notice that this policy is entirely analogous to \Cref{alg:ball_full}, save for the fact that the opaque option allocates the product with the lowest inventory level, as compared to the balls-into-bins policy, which throws a flexible ball into the highest-loaded bin. For completeness, we include a formal description of the semi-dynamic policy within this context in Appendix \ref{apx:alg-semi}.

\Cref{thm:dynamic_objective} establishes that the semi-dynamic policy achieves a ``best-of-both-worlds'' as it relates to the revenue-inventory cost trade-off faced by the retailer: its long-run average revenue is within $o(1)$ of the revenue-maximizing policy that never offers any opaque discounts, all the while achieving all of the inventory cost savings of the policy that seeks to perfectly balance inventory levels at all times.

\begin{theorem}\label{thm:dynamic_objective}
Under the semi-dynamic policy, the following holds, for any $\cstsemi \leq \frac{1}{10 \binom{N}{2}}$ and $\delta \in \Theta(1)$:
\begin{enumerate}[label=(\roman*)]
    \item $\mathbb{E}[M^{d}] \in \mathcal{O}(\sqrt{S})$ and $\mathbb{E}[\R^{d}] = NS  -\theta_{d}$, where {$\theta_{d} \in \mathcal{O}(1)$}.
    \item ${Rev}^{d} = \hat{p} - \delta\left({1-N/2+2N\delta/\gamma}\right) \cdot \xi^1_{d}$ and ${Inv}^{d} = \frac{K}{NS} + {\frac{h}{2} \cdot NS} + \xi^2_{d}$, where $\xi^1_{d} \in \mathcal{O}\parenthesis{\frac{1}{\sqrt{S}}}$ and \mbox{$\xi^2_{d} \in \mathcal{O}\parenthesis{\frac{1}{S}}$}.
\end{enumerate}
\end{theorem}

\begin{table}[t]
\parbox{.45\linewidth}{
\centering
\begin{tabular}{cccc}
\toprule
&$\pi^{nf}$ &$\pi^a$& $\pi^d$\\
\midrule
$\mathbb{E}[M^\pi]$ & 0 & $\Theta(S)$ & $\mathcal{O}\parenthesis{\sqrt{S}}$
\\
$NS-\mathbb{E}[R^\pi]$ & $\Omega\parenthesis{{\sqrt{S}}}$ & $\mathcal{O}\parenthesis{1}$ & $\mathcal{O}\parenthesis{1}$ \\
\bottomrule
\end{tabular}
\caption{Discount Frequency-Cycle Length Trade-off}\label{tab:res2}
}
\hfill
\parbox{.45\linewidth}{
\centering
\begin{tabular}{cccc}
\toprule
&$\pi^{nf}$ &$\pi^a$& $\pi^d$\\
\midrule
$Rev^*-Rev^\pi$ & 0 & $\Theta(1)$ & $\mathcal{O}\parenthesis{\frac{1}{\sqrt{S}}}$
\\
$Inv^{\pi}-Inv^*$ & $\Omega\parenthesis{\frac{1}{\sqrt{S}}}$ & $\mathcal{O}\parenthesis{\frac{1}{S}}$ & $\mathcal{O}\parenthesis{\frac{1}{S}}$ \\
\bottomrule
\end{tabular}
\caption{Revenue-Inventory Cost Trade-off}\label{tab:res1}
}
\end{table}

% \begin{table}[]
%     \centering
%     \begin{tabular}{cccc}
% \toprule
% &$\pi^{nf}$ &$\pi^a$& $\pi^d$\\
% \midrule
% $Rev^*-Rev^\pi$ & 0 & $\Theta(S)$ & $\mathcal{O}\parenthesis{\sqrt{S}}$
% \\
% $Inv^{\pi}-Inv^*$ & $\Omega\parenthesis{\sqrt{S}}$ & $\mathcal{O}\parenthesis{1}$ & $\mathcal{O}\parenthesis{1}$ \\
% \bottomrule
% \end{tabular}
% \end{table}

% \begin{table}[]
%     \centering
%     \begin{tabular}{cccc}
% \toprule
% &$\pi^{nf}$ &$\pi^a$& $\pi^d$\\
% \midrule
% $\EE{\Gap^{\pi}(T)}$ & $\Omega\parenthesis{\sqrt{T}}$ &  $\mathcal{O}\parenthesis{1}$& $\mathcal{O}\parenthesis{1}$
% \\
% $\EE{M^\pi}$ & 0 & $\Theta(T)$ & $\mathcal{O}\parenthesis{\sqrt{T}}$ \\
% \bottomrule
% \end{tabular}
% \end{table}

We summarize our results in \Cref{tab:res1,tab:res2}, using $Rev^*$ and $Inv^*$ to respectively denote the maximum long-run average revenue $\hat{p}$ and the ideal inventory costs $\frac{K}{NS}+{\frac{h}{2} \cdot NS}$.  We provide a brief proof sketch below, deferring the formal proof of \Cref{thm:dynamic_objective} to Appendix \ref{apx:opaque-semi}.

\begin{proof}{Proof sketch.}
The proof of the theorem relies on a coupling between the inventory and balls-into-bins processes, where each customer's purchase decision corresponds to an allocation of an arriving ball to the corresponding bin. Under this coupling, $\Gap_I^d(t) = \Gap^d(t)$ for all $t$ in a given replenishment cycle; as a result, throughout the proof we focus on the coupled balls-into-bins system.

As before, let \mbox{$\Tstar := \inf\{t: \Gapd(t) \geq \frac{\cstsemi(T-t)q}{N}\}$} be the first time the threshold condition is satisfied. The bound on the expected number of opaque purchases $\mathbb{E}[M^d]$ is a direct corollary of \Cref{thm:ball_semi2}, which established that $\mathbb{E}[T-\Tstar] \in \mathcal{O}(\sqrt{T})$. Using the fact that $T = N(S-1)+1$ by definition, we obtain $\mathbb{E}[M^d] \in \mathcal{O}(\sqrt{S})$. All other results then follow from the bound on $\mathbb{E}[R^d]$, which we describe below.

To bound this quantity, we first relate the distribution of the replenishment cycle length to that of the gap of the system, for all $t$. In particular, leveraging results from \citet{elmachtoub2019value}, we show: 
\begin{equation}\label{eq:relation-body}
    \mathbb{E}[R^d]= NS - \sum_{t=S}^{N(S-1)+1}\mathbb{P}\left(\Gap^d(t)\geq S-t/N\right) + \Omega(1).
\end{equation}
Hence, it suffices to establish that $\sum_{t=S}^{N(S-1)+1}\mathbb{P}\left(\Gap^d(t)\geq S-t/N\right)\in \mathcal{O}(1)$.

Herein lies the main technical challenge as compared to the vanilla balls-into-bins model. While this latter model only required a bound on the {\it expected} gap at the end of the horizon, since the horizon is random in the opaque selling model, we must now bound the {\it tail} of the gap {\it in each period}. We do so by splitting the flexing horizon into two, letting $\Ttilde$ denote the midpoint between $\Tstar$ and $T$, for any realization of $\Tstar$.

\smallskip

\textbf{Case 1: $t \in \{\Tstar,\ldots,\Ttilde\}$.} In this region, $S-t/N$ is large, so we should naturally expect $\mathbb{P}\left(\Gap^d(t)\geq S-t/N\right)$ to be small. The main challenge in obtaining tight bounds on the tail of $\Gapd(t)$ lies in the fact that there exists an accumulated imbalance at $\Tstar$ on which we would need to condition. The following key fact, however, allows us to eschew this obstacle by instead turning our attention to the more tractable always-flex policy.
\begin{lemma}[Informal Lemma]\label{lem:informal-lemma}
Let $\Gapa(t-\Tstar)$ denote the gap induced by the always-flex policy for a system initialized with empty bins at $\Tstar$. Then,
\[\mathbb{P}\left(\Gapd(t) \geq S-t/N \mid \Gapd(\Tstar) \leq a, \Tstar\right) \leq \mathbb{P}\left(\Gapa(t-\Tstar) \geq S-t/N-a \mid \Tstar\right).\]
\end{lemma}

To apply this lemma, we require an upper bound on $\Gapd(\Tstar)$. We use the threshold condition to argue that \mbox{$\Gapd(\Tstar) < \frac{c_dq(T-\Tstar+1)}{N}+1$}. We then argue that, for $t$ such that \mbox{$S-t/N-\left(\frac{c_dq(T-\Tstar+1)}{N}+1\right)>0$},
\[\mathbb{P}\left(\Gapa(t-\Tstar) \geq S-t/N-\left(\frac{c_dq(T-\Tstar+1)}{N}+1\right) \mid \Tstar\right)\in e^{-\Omega\left(S-t/N-\left(\frac{c_dq(T-\Tstar+1)}{N}+1\right)\right)}.\] Summing over all $t$ in this interval, we obtain the first part of our constant upper bound.

\smallskip

\textbf{Case 2: $t \in \{\Ttilde+1,\ldots,T\}$.} Unfortunately, in this region the same arguments fail to apply given that $S-t/N$ is now small (and in particular, $S-t/N-\left(\frac{c_dq(T-\Tstar+1)}{N}+1\right) \leq 0$, preventing us from applying the tail bound on $\Gapa(t-\Tstar)$). We first consider the ``easy'' scenario in which the loads of the maximally and minimally loaded bins at $\Ttilde$ intersected for some $\tau \in \{\Tstar,\ldots,\Ttilde\}$. In this scenario, we leverage our analysis from the proof of \Cref{thm:ball_static} to show that the likelihood that the gap is nonzero at $\Ttilde$ is exponentially decreasing in $\Ttilde-\tau$. If it is indeed non-zero, in the worst case, $\Gapd(\Ttilde) \leq \Ttilde-\tau$, since at most $\Ttilde-\tau$ balls could have been placed in any one bin between $\tau+1$ and $\Ttilde$. We can then again apply \Cref{lem:informal-lemma} to bound the tail of the gap. Summing over all $t$ in this region, we again obtain a constant bound for the easy scenario. 

The remaining scenario to consider is when the loads of maximally and minimally loaded bins at $\Ttilde$ never intersected before $\Ttilde$. Let $E^1$ denote this event. Here is where we leverage the definition of $\Ttilde$ as the midpoint of $\Tstar$ and $T$. In particular, the threshold condition implies that the difference between the loads of the maximally and minimally bins at $\Tstar$ is a linear function of $T-\Tstar$, and therefore also linear in $\Ttilde-T$. We then leverage the analysis from the proof of \Cref{thm:ball_static} to show that the likelihood that $E^1$ occurs is exponentially decreasing in $\Ttilde-T$. Summing this over $t \in \{\Ttilde+1,\ldots,T\}$, we obtain our constant bound.
\end{proof}

\smallskip

Note that an immediate corollary of \Cref{thm:dynamic_objective} is that the semi-dynamic policy is a strict improvement on the always-flex policy with respect to profit, for any $\delta \in \Theta(1)$. The comparison between the no-flex and semi-dynamic policies, however, is more subtle, and will in general be instance-dependent. In \Cref{cor:comparison} below we identify a wide regime of parameters for which the semi-dynamic policy generates higher profits than the no-flex policy. 

\begin{proposition}\label{cor:comparison}
Let $Pr^a, Pr^{d},$ and $Pr^{nf}$ respectively denote the long-run average profits under the always-flex, the semi-dynamic, and the no-flex policies. For $\cstsemi = \frac{1}{10 \binom{N}{2}}$\footnote{The assumption that $\cstsemi = \frac{1}{10 \binom{N}{2}}$ is for simplicity. The result would still hold given any constant lower bound on $\cstsemi$, with the only modification being a change in the constant $C$ in Part (ii).}, the following holds:
\begin{enumerate}[label=(\roman*)]
    \item ${Pr}^{d} - {Pr}^{a} \in \Omega(1)$ for any $\delta \in \Theta(1)$.
    % \item Let $K \geq \psi \cdot S$ and $h \geq \psi/S$ for some constant $\psi > 0$. Then, for any $\psi > 17 N^3 (N-2) \cdot \gamma$, there exists $\delta \in \Theta(1)$ such that ${Pr}^{d} - {Pr}^{nf} \in \Omega\parenthesis{\frac{1}{\sqrt{S}}}$.
    \item Fix $\delta \in \Theta(1)$ and suppose $K = \psi \cdot NS$ and $h = \psi/(NS)$ for some constant $\psi > 0$. Then, there exists a constant $C > 0$ such that ${Pr}^{d} - {Pr}^{nf} \in \Omega\parenthesis{\frac{1}{\sqrt{S}}}$ for all $\psi > C \delta$.
\end{enumerate}
\end{proposition}

We defer the proof of \Cref{cor:comparison} to Appendix \ref{app:comparison}. At a high level, this result formalizes the intuition that, if the replenishment and holding costs are high enough relative to the disutility a customer incurs from the opaque product, then it is worthwhile for the retailer to implement the semi-dynamic policy. The threshold at which this occurs naturally has a dependence on $N$ and $\gamma$, since the discount required to incentivize customers to choose the opaque option increases with these two parameters, as established in \Cref{lem:purchase_probabilities}. {Our computational experiments validate that the semi-dynamic policy outperforms the no-flex policy over a wide range of randomly generated instances.}

\begin{remark}
{Though of less practical interest, one can also adapt the static policy in the balls-into-bins model to the opaque selling model. Using similar arguments as those used in \Cref{thm:dynamic_objective}, it is easy to show that this static policy asymptotically leads to the same expected replenishment cycle length, at the cost of a higher number of opaque discounts given. We omit this analysis for brevity.}
\end{remark}

\begin{remark}\label{rem:min-asp}
    We conclude the section with a brief discussion of the minimal assumptions on the choice model required for our theoretical results. In particular, for \Cref{prop:nf_objective}, \Cref{prop:af_objective}, and \Cref{thm:dynamic_objective} to hold, we only require that (i) a purchase is made in each period, and (ii) each individual product is equally likely to be purchased. As discussed in \Cref{ssec:opaque-setup}, we require (i) to hold as we are interested in settings where there is a clear trade-off between exercising the flexible option and load balancing. With respect to (ii), we {suspect} that our results can be extended in a fairly straightforward manner to settings in which the purchase probability across products is heterogeneous, as discussed in \Cref{rem:non-uniform-bins} for the balls-into-bins model. Moreover, the simplicity of the Salop circle model allows us to obtain a closed-form expression of $q_o$ as a function of $\delta$, and therefore allows for the profit comparison in \Cref{cor:comparison}.
    Finally, we underscore that while these two minimal assumptions allow us to formalize the main managerial insight of this section --- that strategically timed, late-stage opaque selling achieves the optimal revenue and inventory cost trade-off ---  our numerical experiments demonstrate that this insight continues to hold for a wide variety of choice models in which neither condition holds.
\end{remark}

\section{Computational Experiments}\label{sec:numerics}

In this section we complement our theoretical results via extensive computational experiments for both the vanilla balls-into-bins and the opaque selling setups.

\subsection{Balls-into-Bins}\label{sec:bib_simulation}

We begin with a numerical study of the balls-into-bins model. Throughout our experiments, we fix $N = 5$, $q = 0.1, \staticcst = 20$ and $\semicst = 0.5$, except when specified. All results are averaged across 500 replications.

\paragraph{Benchmark comparison.} In \Cref{fig:bib_comparison} we validate our theoretical results by comparing $\EE{\Gapp(T)}$ and $\EE{M^\pi}$ under the no-flex, always-flex, static, and semi-dynamic policies, for $T \in \{10^4, 2\cdot10^4,\ldots,9\cdot10^4\}$. Additionally, we investigate the performance of a practical, fully dynamic variant of the semi-dynamic policy, termed the {\it dynamic} policy $\pi^{dyn}$. Rather than flexing in all periods after threshold condition \eqref{eq:dynamic-threshold} is met, this policy {allows for the possibility that the gap of the system may naturally self-correct even after $\Tstar$;} as a result, it keeps checking the threshold condition in each period after $\Tstar$, only ever flexing in the period immediately after the condition is met.

\begin{figure}[t]
    \centering
    \subfloat[\centering {$\mathbb{E}[\Gap^\pi(T)]$ vs. $T$}]{\includegraphics[width=0.42\textwidth]{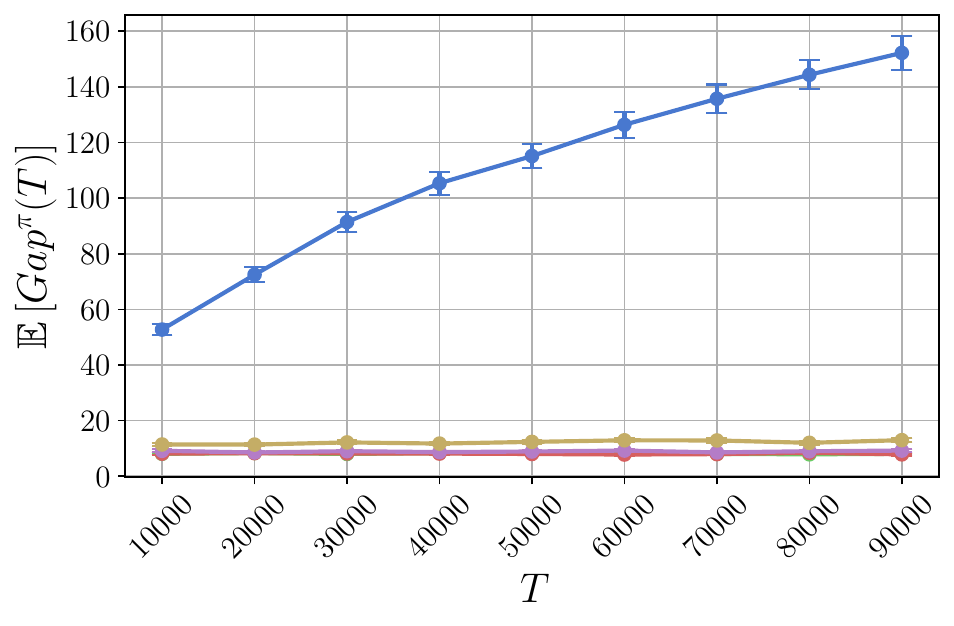} } %
    \quad 
    \subfloat[\centering {$\mathbb{E}[M^\pi]$ vs. $T$}]{\includegraphics[width=0.51\textwidth]{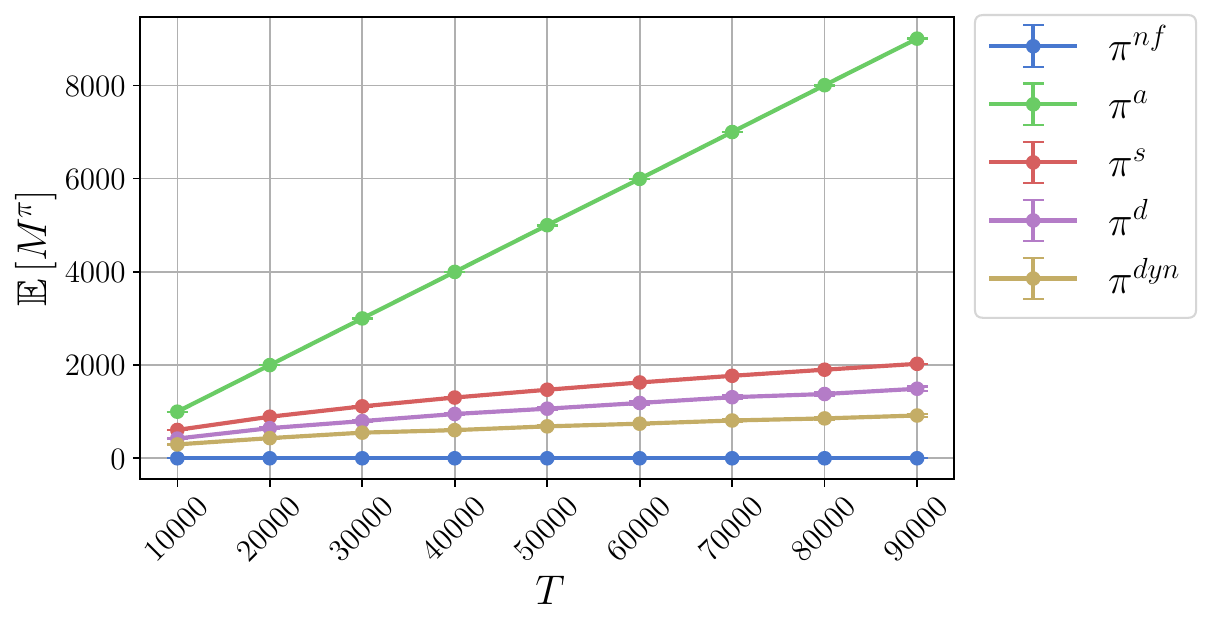} }
    \caption{\centering Comparison of the no-flex, always-flex, static, semi-dynamic, and dynamic policies.}
    \label{fig:bib_comparison}
\end{figure}

Our results corroborate Theorems \ref{thm:ball_static}-\ref{thm:ball_semi}. In particular, in contrast to the no-flex policy, all flexing policies maintain a constant gap as $T$ scales large. We moreover observe that the static, semi-dynamic, and dynamic policies exert flexibility in a sublinear number of periods, with the dynamic policy exerting half as many flexible throws as the static policy, for $T = 9\cdot 10^4$. This comes at the cost of a slightly higher gap.

\paragraph{Impact of $N$ and $q$.} {We now examine the impact of the instantiation of $a_d$ {on} the performance of the semi-dynamic policy. 
Throughout all experiments, we fix $T = 2 \cdot 10^5$.

In \Cref{fig:sensitivity_N} we fix $q = 0.5$ and plot $\mathbb{E}[\Gap^d(T)]$ and $\mathbb{E}[M^d]$ versus $N$, for various scalings of $a_d$ with respect to $N$. While \Cref{thm:ball_semi} establishes that $\mathbb{E}[\Gap^d(T)] \in \mathcal{O}(1)$ for all $a_d \leq \frac{1}{5 {N \choose 2}} \in \Theta(1/N^2)$, our results suggest that setting {$a_d \in \mathcal{O}\parenthesis{\sqrt{N}/\log N}$} suffices for our algorithm to yield a constant end-of-horizon gap (see \Cref{fig:n-gap}).\footnote{In this subsection we abuse notation and use $\Theta(\cdot)$ to refer to the scaling with respect to $N$ or $q$, where relevant.} Moreover, since the right-hand side of the threshold condition, $\frac{a_d(T-t)q}{N}$, is much higher for this larger value of $a_d$, significantly fewer flexes are exerted, as can be seen in \Cref{fig:n-md}. {Additionally, {the fact that setting $a_d\in \Theta(1/\sqrt{N})$ leads to an increasing gap in} \Cref{fig:n-gap} suggests that setting $a_d \in \mathcal{O}\parenthesis{\sqrt{N}/\log N}$ is not only sufficient, but also necessary, to close the gap by the end of the horizon.
%In this case, the right-hand side of the threshold condition grows as $\Theta(1/\sqrt{N})$, which decreases more slowly than $\Theta\parenthesis{\sqrt{\log(N)/N}}$, the scaling of the gap under the no-flex policy \citep{raab1998balls}. As a result, the threshold condition is satisfied at later times as $N$ increases, and the semi-dynamic policy is unable to close the gap by the end of the horizon.
}

\begin{figure}[t]
    \centering
    \subfloat[\centering {$\mathbb{E}[\Gapd(T)]$ vs. $N$}\label{fig:n-gap}]{{\includegraphics[width=0.45\textwidth]{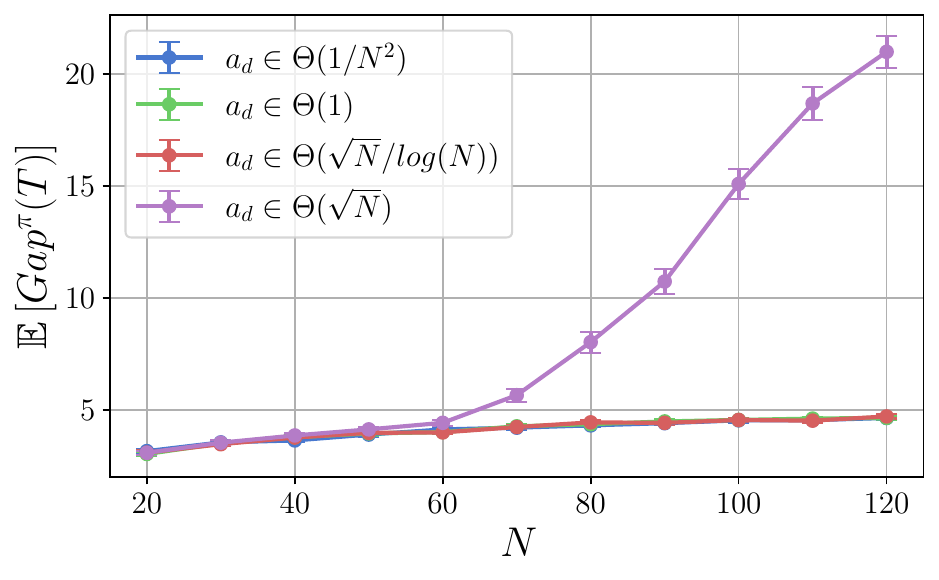} }}%
    \quad 
    \subfloat[\centering {$\mathbb{E}[M^d]$ vs. $N$}\label{fig:n-md}]{{\includegraphics[width=0.45\textwidth]{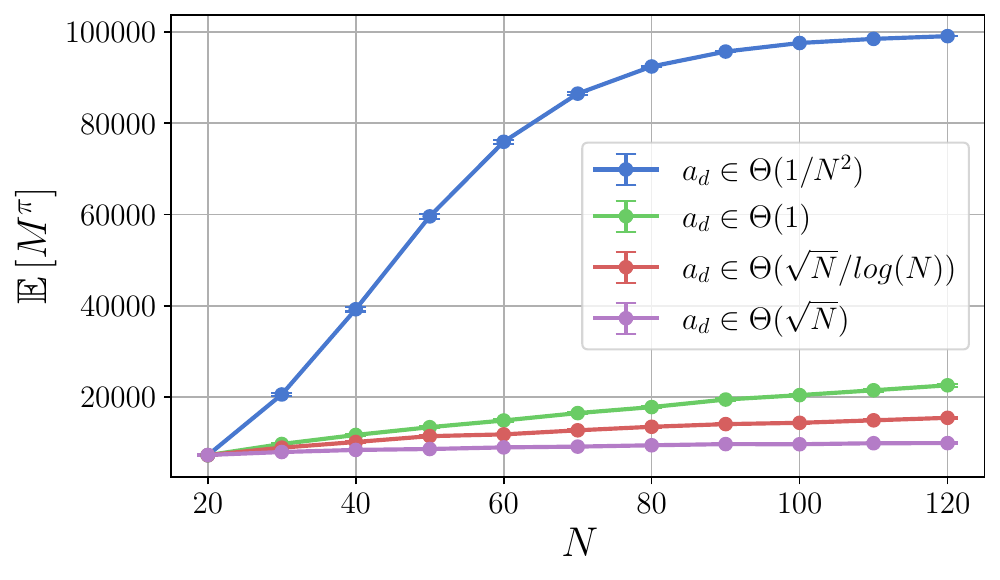} }}%
    \caption{\centering Performance of the semi-dynamic policy for various instantiations of $a_d$ with respect to $N$. Here, $T = 2 \cdot 10^5$, $q=0.5$.}
    \label{fig:sensitivity_N}
\end{figure}

In \Cref{fig:sensitivity_q} we turn our attention to the dependence of $a_d$ on $q$.  For $a_d \leq \frac{1}{5 {N \choose 2}}$, the threshold $\frac{\semicst(T-t)q}{N}$ scales linearly in $q$, which implies that the policy begins flexing earlier for small values of $q$. Such behavior is intuitively desirable, since small values of $q$ also imply that the policy has fewer opportunities to reduce the gap once it begins flexing. \Cref{fig:q-gap} justifies our choice of $a_d$ independent of $q$, since choices of $a_d$ that are decreasing in $q$ lead to substantial increases in $\mathbb{E}[\Gap^d(T)]$, especially for small values of $q$.  
%this intuition: setting $a_d$ to be decreasing in $q$ leads to a substantial increase in $\mathbb{E}[\Gap^d(T)]$ for small values of $q$. 
Naturally, setting $a_d \in \Theta(1)$ results in significantly more flexes than with a smaller threshold (see \Cref{fig:q-md}), though that number does not depend on $q$. This independence can easily be explained via the following back-of-the-envelope calculation that uses the threshold condition to establish that ${T-}\Tstar \in \Theta\left(\frac{1}{a_dq}\right)$, and therefore $\mathbb{E}[M^d] \in \Theta(\frac{1}{a_d})$.}

\begin{figure}[t]
    \centering
    \subfloat[\centering {$\mathbb{E}[\Gapd(T)]$ vs. $q$}\label{fig:q-gap}]{{\includegraphics[width=0.45\textwidth]{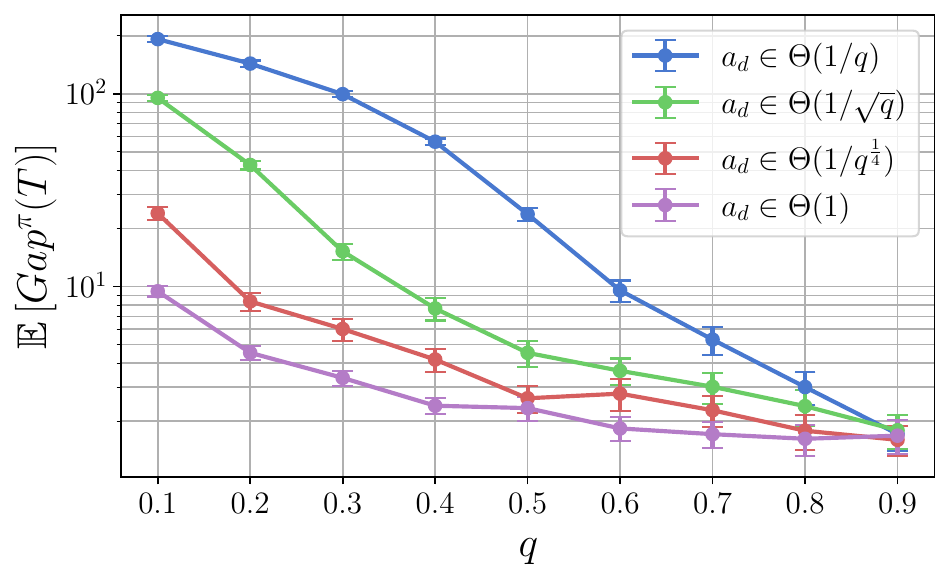} }}%
    \quad 
    \subfloat[\centering {$\mathbb{E}[M^d]$ vs. $q$}\label{fig:q-md}]{{\includegraphics[width=0.45\textwidth]{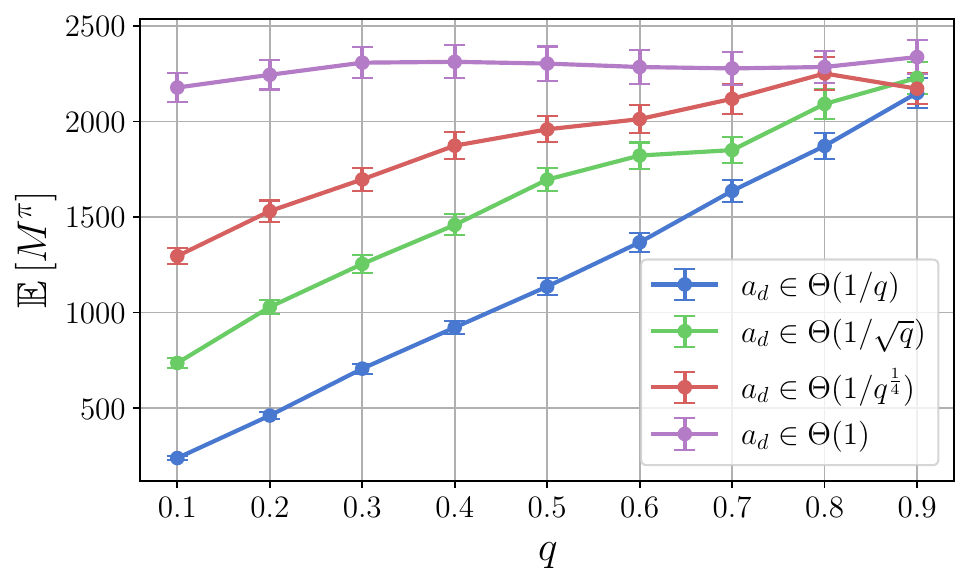} }}%
    \caption{\centering Performance of the semi-dynamic policy for various instantiations of $a_d$ with respect to $q$. Here, $T = 2 \cdot 10^5$, $N = 5$.}
    \label{fig:sensitivity_q}
\end{figure}

\subsection{Opaque Selling}\label{sec:opaque_extensions}

{We now turn to the opaque selling model, and study the robustness of the insights that we derived in \Cref{sec:opaque_salop}. In particular, we study the performance of the semi-dynamic policy when customers (i) may exhibit heterogeneous preferences across products, and the decision-maker implements {\it discriminatory pricing} (DP), i.e., charges different prices for different products, and (ii) may not be risk-neutral. Our numerical results validate the strong performance of the semi-dynamic policy for this much more general setting, but also give rise to the following insights:
\begin{itemize}
\item Though offering the opaque product increases the purchase probability in every period in which it is offered, whether or not it increases the expected revenue depends on customers' idiosyncratic preferences as well as the magnitude of the discount. If it decreases revenue, our results show that the semi-dynamic policy can strongly outperform both the always-flex policy (due to fewer periods with decreased revenue) and traditional selling (due to better-balanced inventory). 
In contrast, when it increases revenue, one would assume that ``more opaque selling'' is better given that it increases both revenue and ensures that inventory is better-balanced. This however is not true as the higher aggregate demand under the always-flex policy may actually increase the inventory costs associated with these additional purchases. {Thus, even when opaque selling increases revenue, the semi-dynamic policy may yet outperform the always-flex policy.}
\item The key requirement for the superior performance of the semi-dynamic policy is that % the fact that 
the opaque offering be offered {\it strategically late in the replenishment cycle}, rather than simply infrequently: offering the opaque product in each period independently with the same probability as the semi-dynamic policy fails to effectively balance the inventory and thus does not achieve comparable cost savings.
\item The semi-dynamic policy retains its superior performance in the face of risk-averse and risk-seeking customers. As we vary both $\delta$ and customers' risk tolerance, we find that the profit obtained by the semi-dynamic policy consistently exceeds that of the two benchmark policies. Moreover, compared to the always-flex policy, we find that the semi-dynamic policy is more robust to an ill-chosen value of $\delta$.
\end{itemize}
}

\subsubsection*{Numerical setup.} We first specify the behavioral and inventory models used in our experiments. Throughout, we use the notation $\{o\}$ to denote the product corresponding to the opaque option.
\paragraph{Choice model.} {We consider a setting in which the retailer sells $N = 3$ types of products, and assume that each product has a marginal production cost of $\underline{c}$.} There are $L = 3$ types of customers. In each period, a type-$l$ customer arrives with probability $\alpha_l \in (0,1)$. A type-$l$ customer has valuation $V_i^l = \underline{v} + \overline{V}_i^l$ for product $i \in [N]$, where $\underline{v}$ represents customers' intrinsic value for obtaining any product, and $\overline{V}_i^l\sim U(0,1-\underline{v})$ is an idiosyncratic term unique to each type, product pair. For our main set of experiments, we assume customers are risk-neutral, and value the opaque option at $V_o^l = \frac{1}{N}\sum_{i=1}^N V_i^l$. We moreover randomize over ${\mathbf{\alpha}} \in \bracket{\parenthesis{1/3, 1/3, 1/3}, \parenthesis{2/5, 3/10, 3/10}, \parenthesis{1/2, 1/4, 1/4}}.$

To model idiosyncrasies across customers of the same type, we assume customers make their purchase decisions according to the multinomial logit choice model (MNL) \citep{mcfadden1973conditional}. Formally, when the opaque product is not offered, we let $q_{l,i}$ be the probability that a type-$l$ customer purchases product $i$. Then,
\[q_{l,i} = \frac{\exp\left((V_i^l-p_i)/\mu\right)}{1 + \sum_{j\in [N]}\exp\left((V_j^l-p_j)/\mu\right)},\]
where $p_j$ is the price of product $j \in [N]\cup\{o\}$, and $\mu$ is a scale parameter.  

When the opaque option is offered, we denote the purchase probabilities by $q_{l,i}^o \ \forall \ i \in [N]\cup \{o\}$, with:
\[q_{l,i}^o = \frac{\exp\left((V_i^l-p_i)/\mu\right)}{1 + \sum_{j\in [N]\cup \{o\}}\exp\left((V_j^l-p_j)/\mu\right)}.\]

As in our analytical results, we assume the prices of all products are fixed across time; however, we allow the retailer to vary prices across products. Specifically, the retailer sets $\hat{\mathbf{p}} = (\hat{p}_1,\ldots,\hat{p}_N)$ to be a {revenue-maximizing price vector absent the opaque option, i.e., $\hat{\mathbf{p}}\in \arg\max_{\mathbf{p}}\sum_{i\in[N]}(p_i-\underline{c}) q_i,$} where \mbox{$q_i = \sum_{l\in[N]}\alpha_l q_{l,i}$} is the aggregate purchase probability of product $i$ across all customer types.\footnote{In general, solving for $\hat{\mathbf{p}}$ exactly is computationally intractable; we instead approximate $\hat{\mathbf{p}}$ via grid search, enumerating over $\mathbf{p} \in \{0.01,0.02,\ldots,1\}^N$.} {Given $\hat{\mathbf{p}}$, we apply the opaque selling discount $\delta > 0$ to the weighted average price of a product in the absence of the opaque option, letting \mbox{$p_o = \sum_{i\in[N]}\hat{p}_i q_i -\delta$}.} Finally, across all experiments, we let {$\underline{v} = 0.6, \underline{c} = 0, \mu = 0.1$ and $\delta = 0.05$\footnote{{This value of $\delta$ represents a 7.7\% discount relative to $\sum_{i\in[N]}\hat{p}_i q_i$, on average.}}. We defer a sensitivity analysis of our results to various values of $\underline{c}$ to Appendix \ref{apx:numerical-results-balls-into-bins}.} 

\paragraph{Inventory model.} We assume the retailer sets the aggregate initial inventory level across all products according to the classical Economic Order Quantity (EOQ) formula \citep{harris1990many}. Letting $\hat{S}$ denote this total inventory level, we have $\hat{S} := \sqrt{\frac{2DK}{h}}$ (rounded to the nearest integer), where $D$ represents an estimate of aggregate demand across all products. Notice that, under this general model, offering the opaque option may boost product sales, thereby potentially significantly impacting the aggregate demand $D$ if offered frequently. As a result, we consider different values of $D$ depending on the opaque selling policy. For the always-flex policy, we let \mbox{$D = \sum_{l\in[L]}\alpha^l\left(\sum_{i\in[N]\cup\{o\}}q_{l,i}^o\right)$}, since the opaque option is offered in each period. For all other policies, we let $D = \sum_{l\in[L]}\alpha^l\left(\sum_{i\in[N]}q_{l,i}\right)$ be the aggregate expected demand absent the opaque option.\footnote{This estimate of $D$ is exact for the no-flex policy. However, for all other tested policies that offer the opaque option, this will underestimate the expected demand only slightly, given that these policies exercise the opaque option very infrequently. While the retailer will stock suboptimally in these cases, for this value of $\hat{S}$, the profit induced by these policies can then be viewed as a conservative estimate of the profit under the ``optimal'' order-up-to levels.} {To adjust for the heterogeneous purchase probabilities across products, we set the order-up-to level $S_i$ for each product $i\in [N]$ by normalizing $\hat{S}$ by the market share for product $i$. For instance, under the always-flex policy:
\begin{align}\label{eq:si-af}
S_i = \hat{S}\cdot \frac{\sum_{l\in[L]}\alpha^l q_{l,i}^o}{\sum_{l\in[L]}\alpha^l\left(\sum_{j\in[N]}q_{l,j}^o\right)}.
\end{align}
For all other policies, we set $S_i$ by replacing $q_{l,i}^o$ in \Cref{eq:si-af} above with $q_{l,i}$:
\begin{align}\label{eq:si-others}
S_i = \hat{S}\cdot \frac{\sum_{l\in[L]}\alpha^l q_{l,i}}{\sum_{l\in[L]}\alpha^l\left(\sum_{j\in[N]}q_{l,j}\right)}.
\end{align}
}
Finally, we randomly generate \mbox{$K \in \bracket{1,2,\cdots,5}$} and \mbox{$h \in \bracket{0.004,0.008,\cdots, 0.02}$} in our experiments, unless otherwise specified.

\paragraph{Policies.} Throughout our experiments we compare the performance of the semi-dynamic policy to that of the no-flex and always-flex policies. Recall, in the homogeneous setting considered in \Cref{sec:opaque_salop}, the retailer allocates the product with the highest remaining inventory to customers purchasing the opaque product. While this naturally balances inventory levels across time for homogeneous products, the notion of system balancedness changes in the heterogeneous setting we consider here. Specifically, in order to account for the different purchase rates (and stocking levels) of products in this setting, we now reason with respect to the {\it normalized} inventory levels of products, defined as $\bar{z}_i^\pi(t) = \frac{z_i^\pi(t)}{S_i}$ for all $i \in [N]$,  $t \geq 0$. System balancedness, then, naturally corresponds to the normalized inventory levels being equal across all products, achieved by allocating the product with the highest normalized inventory level to a customer purchasing the opaque option. In line with this reasoning, we redefine the gap of the system to take in the normalized inventory levels of all products, i.e., 
\begin{align}\label{eq:inv-gap1_ext}
\overline{\Gap}_I^{d}(t) := \frac{\sum_{i\in[N]} \overline{z}_i^{\pi}(t)}{N} - \min_{i \in [N]} \overline{z}_i^{\pi}(t), \quad \forall\, t \in [\R^\pi].
\end{align}
The heterogeneous analog of the threshold condition in \Cref{alg:semi} is then:
\begin{equation}\label{eq:flexing_condition_ext}
    \overline{\Gap}_I^{d}(t) \geq \frac{a \cdot \squarebracket{\parenthesis{\sum_{i\in[N]}\parenthesis{S_i - 1}} +1 -t}}{\hat{S}},
\end{equation}
where $a \in (0,1]$ is a tuning parameter. In our experiments, we let $a = 0.5$.

\begin{remark}[Equivalence of the semi-dynamic policies in Sections \ref{sec:opaque_salop} and \ref{sec:opaque_extensions}]
For the special case of the Salop circle model, the above semi-dynamic policy and the semi-dynamic policy specified in \Cref{alg:semi} are equivalent. To see this, observe that since $S_i$ is identical across products under the Salop model, allocating the product with the highest normalized inventory level is equivalent to allocating the product with the highest actual inventory level. Moreover, since exactly one product is purchased in each period, under the Salop model we have that $\sum_{i\in[N]}\bar{z}_i^d(t) = \frac{NS-t}{S}$. Letting $a = c_d q_o$ in \Cref{eq:flexing_condition_ext}, we have that the threshold condition is satisfied if and only if:
\begin{align*}
&\frac{S-t/N}{S}-\frac{\min_{i\in[N]}z_i^d(t)}{S} \geq \frac{c_dq_o\parenthesis{N(S-1)+1-t}}{NS} 
\iff S-t/N-\min_{i\in[N]}z_i^d(t) \geq \frac{c_dq_o\parenthesis{T-t}}{N},
\end{align*}
which is precisely the threshold condition used in \Cref{alg:semi}.
\end{remark}

\subsubsection*{Results.} Across all experiments, we run 100 replications over $10^4$ periods for each randomly generated instance, defined by $K, h, {\alpha}$ and $\overline{V}_i^l$, $i \in [N], l \in [L]$. All reported metrics are averaged over these $10^4$ periods and 100 replications.

Before presenting our results, we note that for these instantiations, whenever the opaque product is offered, the aggregate purchase probability across all offerings is on average 6.5\% higher than periods in which the opaque product is not offered. However, since the opaque product is sold at a discount, opaque selling results in an average revenue loss of 3.8\% in periods in which it is offered. As a result, we observe a revenue-inventory cost trade-off over a wide variety of instances, even in the heterogeneous setting.

\paragraph{Profit comparison.} %In order to contextualize the scale of these gains, 
We plot the distribution of relative profit improvement of the semi-dynamic policy over the no-flex and always-flex policies in \Cref{fig:summary}. Here, we abuse notation and refer to the better of the no-flex and always-flex policies (profit-wise) as $\max\{\pi^{nf},\pi^a\}$.

\begin{figure}%[h]
    \centering
    \subfloat[\centering Relative profit improvement (\%) over $\pi^{nf}$]{{\includegraphics[width=0.45\textwidth]{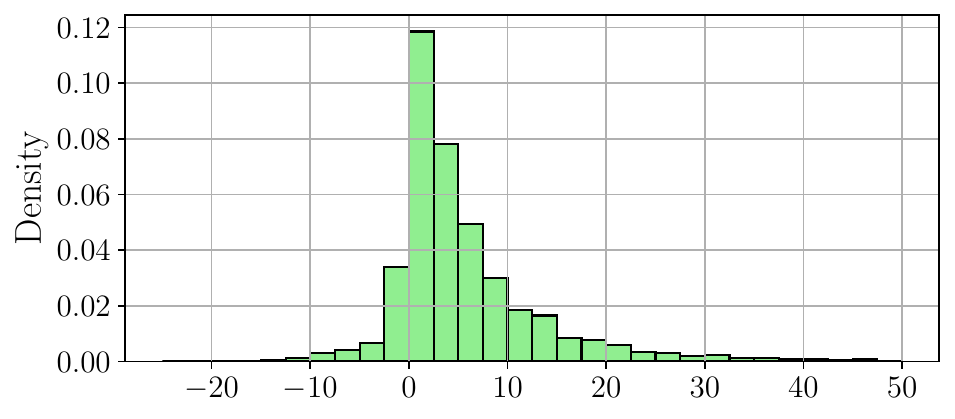}} \label{fig:sum1}}%
    \subfloat[\centering Relative profit improvement (\%) over $\pi^{a}$]{{\includegraphics[width=0.45\textwidth]{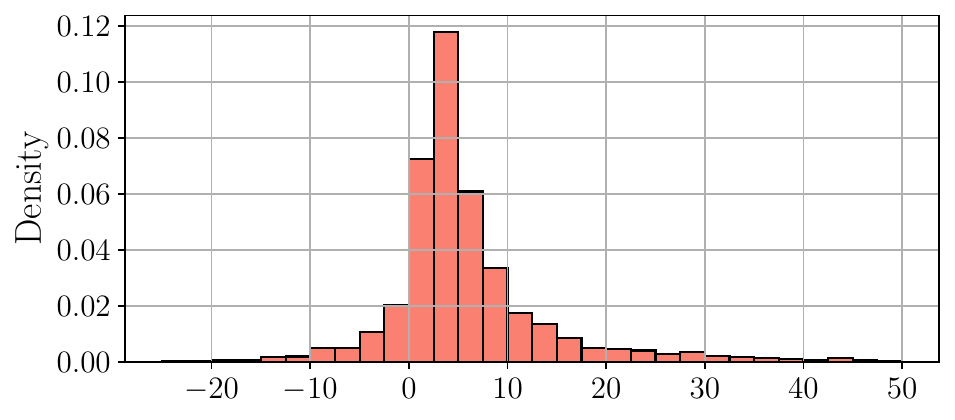} }\label{fig:sum2}}
    \\
    \subfloat[\centering Relative profit improvement (\%) over $\max\{\pi^{nf},\pi^a\}$]{\includegraphics[width=0.91\textwidth]{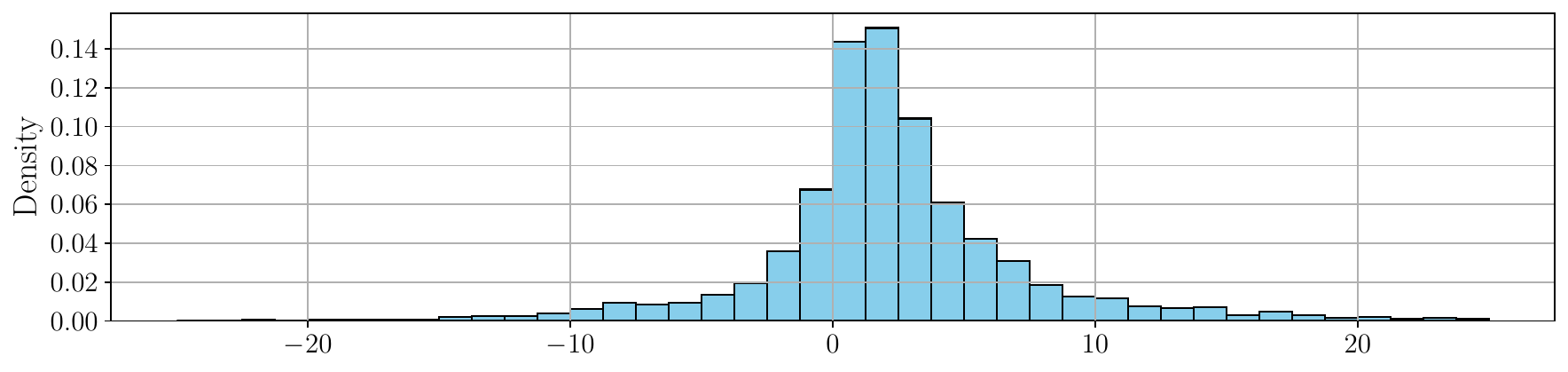} \label{fig:sum3}}
    \caption{\centering Distribution of the relative profit improvement of $\pi^d$ over $\pi^{nf}$, $\pi^a$, and $\max\{\pi^{nf},\pi^a\}$.}
    \label{fig:summary}
\end{figure}

These results strengthen our analytical results by demonstrating that strategically timing opaque selling can yield significant value. In particular, \Cref{fig:sum1} shows that the semi-dynamic policy generates higher profits than the no-flex policy in over 87\% of instances, {with an average relative profit improvement of 5.9\%. In \Cref{fig:sum2}, we observe that it generates higher profits than the always-flex policy in over 88\% of instances, with an average relative profit improvement of 8.4\%. Moreover, the semi-dynamic policy outperforms at least one of the two benchmark policies in 99.1\% of instances, with an average profit gain of 33.6\% relative to the worse of the two. This is due to the fact that the semi-dynamic policy is constructed to efficiently interpolate between the always-flex and no-flex policies: it hedges against the bad instances where (i) the no-flex policy performs poorly due to imbalanced inventory, or (ii) the always-flex policy performs poorly by offering the opaque discount too aggressively. Even when compared to the {\it better} of the two benchmarks, the semi-dynamic policy achieves higher profits in over 76\% of instances, as shown in \Cref{fig:sum3}.

{To better understand the source of these gains, we compare the revenue and inventory cost savings of the semi-dynamic policy relative to the no-flex and always-flex policies in \Cref{tab:comparison}. We observe that the semi-dynamic policy generates 1.4\% less revenue than the no-flex policy on average. This follows from the fact that it offers the opaque product approximately 35\% of the time, with customers purchasing it 17\% of the time (or, approximately half of the time that it is offered). However, the modest revenue drop is more than offset by the semi-dynamic policy's inventory cost savings: its inventory costs are approximately 7.5\% lower than those of the no-flex policy. These savings follow from a lengthening of the average replenishment cycle from 19.2 to 21.1 periods (a 9.7\% increase) thanks to end-of-horizon inventory balancing.

Relative to the always-flex policy, the semi-dynamic policy achieves 2.5\% higher revenue, on average. This follows from the fact that customers purchase the opaque product approximately 47.9\% of the time under the always-flex policy, as compared to 17\% under the semi-dynamic policy. {\it Crucially, we observe that the semi-dynamic policy also incurs lower inventory costs than the always-flex policy.} Specifically, its inventory costs are 2.2\% lower, with savings achieved in approximately 50\% of all instances. This a priori unexpected behavior is due to the fact that, in many instances, the always-flex policy induces customers who would have otherwise chosen the outside option to purchase the opaque product, thereby increasing the aggregate demand across all products. The aggregate order-up-to level $\hat{S}$ increases as a result, yielding higher average inventory levels (approximately 2\% higher than the semi-dynamic policy) and inventory costs overall. These facts together help to explain the significant profit advantage that the semi-dynamic policy has over the always-flex policy.

Finally, we note that while our experiments randomize over the inventory cost parameters $K$ and $h$, in Appendix \ref{apx:numerical-results-balls-into-bins} we investigate the impact of these parameters on the performance of the semi-dynamic policy. Our results echo \Cref{cor:comparison}: while the semi-dynamic policy performs well relative to the no-flex and always-flex policies across a variety of values of $(K,h)$, the gains are the largest as these two grow large, and inventory management becomes a more important part of a retailer's daily operations.
}

\begin{table}%[t]
\centering
\begin{tabular}{c|cc|cc}
\toprule
Benchmark & Abs. revenue gain & \% revenue gain & Abs. inventory savings & \% inventory savings \\
\midrule
$\pi^{nf}$ & -0.009 & -1.4 & 0.022 & 7.5\\
$\pi^{a}$ & 0.016 & 2.5 & 0.006 & 2.2 \\
\bottomrule
\end{tabular}
\caption{Comparison of semi-dynamic policy's revenue and inventory costs to no-flex and always-flex policies}
\label{tab:comparison}
\end{table}

\paragraph{Importance of strategically timing the opaque offering.} In order to illustrate the value of {\it strategically} timing the opaque selling offering, we also consider a policy termed the ``flex-$\sqrt{S}$'' policy $\pi^f$, which offers the opaque option with the same probability as the semi-dynamic policy, but does so in an i.i.d., state-independent fashion in each period.\footnote{The flex-$\sqrt{S}$ policy can be viewed as the $N$-opaque selling strategy in \citet{elmachtoub2019value}, where customers purchase the opaque product with probability $\Theta\parenthesis{\frac{1}{\sqrt{S}}}$.} Specifically, for each sample path, we instantiate the flex-$\sqrt{S}$ policy by first simulating the semi-dynamic policy $\pi^d$, and using the probability with which this latter policy offers the opaque product in each period as the flexing probability for $\pi^f$.

We plot the distribution of inventory costs and profit of the semi-dynamic policy relative to the flex-$\sqrt{S}$ policy in \Cref{fig:pif}. ({By construction, these two policies yield the same expected revenue of approximately $0.642$.}) As shown in \Cref{fig:pif-inv}, the semi-dynamic policy yields significantly lower inventory costs, with an average inventory cost saving of 4.7\% relative to $\pi^f$. This result is intuitive, since the decision to offer the opaque product is uncorrelated with inventory under the flex-$\sqrt{S}$ policy, meaning that it may offer the opaque product when inventory levels are balanced, and fail to offer the opaque product in periods when the gap is large. This leads to an average of 6.6\% shorter replenishment cycle lengths compared with $\pid$. \Cref{fig:pif-prof} shows that this has important implications for profit, with the semi-dynamic policy generating higher profit in over 99\% of instances, with an average relative profit improvement of 5.4\%.

\begin{figure}%[t]
    \centering
    \subfloat[\centering Inventory cost (\%)]{{\includegraphics[width=0.45\textwidth]{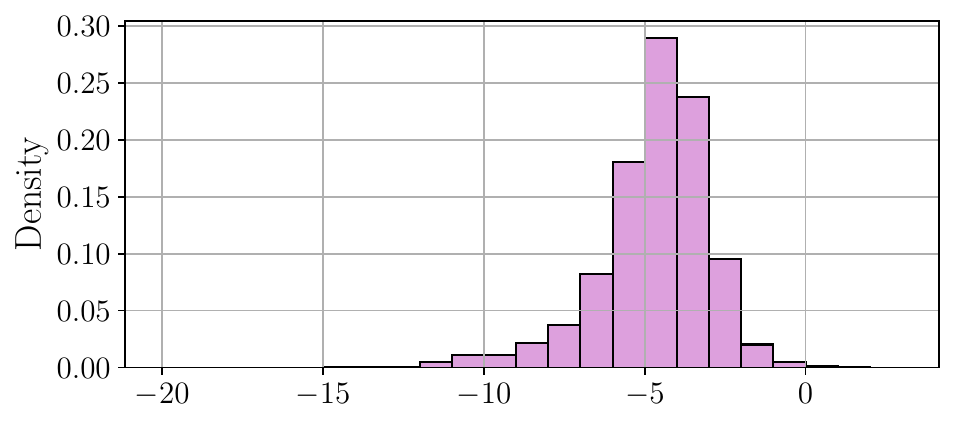} } \label{fig:pif-inv}}%
    \quad 
    \subfloat[\centering Profit improvement (\%)]{{\includegraphics[width=0.45\textwidth]{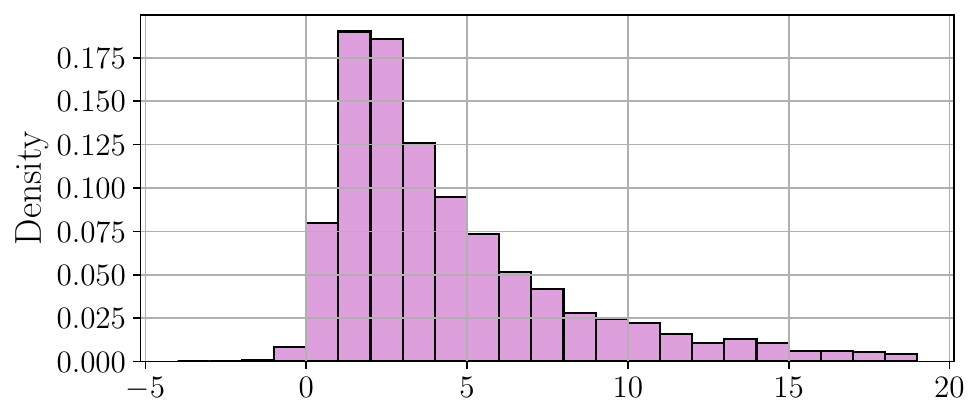} } \label{fig:pif-prof}}%
    \caption{\centering {Histograms illustrating the inventory cost and profit improvement of $\pi^d$ relative to $\pi^f$.}}
    \label{fig:pif}
\end{figure}

\paragraph{Robustness to customer risk preferences.} We next test the robustness of the semi-dynamic policy to risk-averse and risk-seeking behavior. Specifically, we model risk-seeking customers by letting \mbox{$V_o^l = \max_{i \in [N]} V_i^l, \forall \ l \in [L]$}; on the other hand, we model risk-averse customers by letting \mbox{$V_o^l = \min_{i \in [N]} V_i^l, \forall \ l \in [L]$}. 
{We plot the average profits of $\pi^{nf}, \pia$ and $\pi^d$ as a function of $\delta$ under these three behavioral models in \Cref{fig:risk}.}

\Cref{fig:risk} illustrates that, across all three risk preferences, the semi-dynamic policy (represented by the red curve) on average performs at least as well as the no-flex and always-flex policies. This is not only true when comparing the maximum profit of each policy across all values of $\delta$, but also when comparing profits for almost any {\it fixed} value of $\delta \in \{0.001,0.01,0.02,\ldots,0.1\}$. 

Beyond the superior performance of the semi-dynamic policy, our results also illustrate how (i) customers' risk preferences and (ii) the choice of $\delta$ affect the performance of the always-flex and semi-dynamic policies. 
% Intuitively, under more risk-seeking behaviors customers are more likely to choose the flexible option. 
As customers become more risk-seeking, the cost of flexibility decreases, and one would expect the profit of both policies to increase. However, our results display that this only holds for carefully chosen~$\delta$: for small values of $\delta$ it is indeed the case that both policies yield greater profits the more risk-seeking the customers are. In contrast, for large values of $\delta$ both policies incur significant loss under more risk-seeking customers, which is due to the revenue loss from customers choosing the opaque option without further increasing the inventory cost savings. Finally, our results show, for all risk preferences, that the profit loss of the semi-dynamic policy due to an ill-optimized value of $\delta$ is much smaller than that of the always-flex policy. In other words, the semi-dynamic policy is not only more profitable but also more robust than the always-flex policy.

\begin{figure}%[t]
    \centering
    {{\includegraphics[width=1\textwidth]{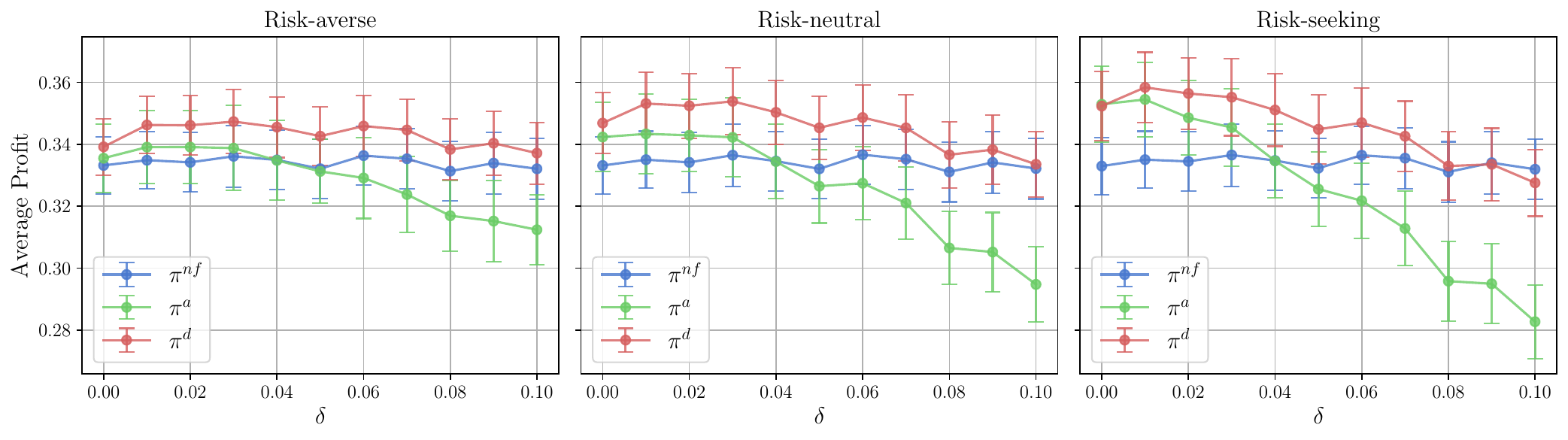} }}%
    \caption{\centering Profit of $\pi^{nf}, \pia$ and $\pi^d$ under risk-averse, risk-neutral, and risk-seeking behavior as a function of $\delta$. Here we report the average profit and the interquartile range (25th–75th percentiles) across all instances. Note that the average profit under $\pi^{nf}$ has no dependence on the behavioral model, since the opaque product is never offered under $\pi^{nf}$; we plot the performance of this policy for benchmarking purposes.}
    \label{fig:risk}
\end{figure}

\section{Conclusion}\label{sec:conclusion}

{In this work we considered {\it end-of-horizon} load balancing problems in settings where load balancing across resources is costly to the decision-maker. Using the canonical balls-into-bins paradigm, we demonstrated the power of late-stage flexibility, designing an algorithm that achieves approximate end-of-horizon balance by exerting flexibility the minimum number of times, as the horizon $T$ grows large. Building on these algorithmic insights, we then illustrated the power of late-stage opaque selling strategies for inventory management. Such policies achieve the optimal trade-off between the revenue loss incurred by opaque discounts and the resulting inventory cost savings that the opaque product generates due to load balancing. We moreover demonstrated that these policies can dominate classical benchmarks that either blindly offer or withhold the opaque product in each period.

Our work opens several avenues for future exploration. {Within the context of opaque selling, possible extensions of our work include the joint optimization of pricing and opaque selling policies, as well as extensions of our algorithms to settings in which there are nonstationarities in (i) arriving demand's preferences for resources, and (ii) the probability with which arriving demand is flexible.}  Additionally, as discussed in the introduction, late-stage flexibility may prove valuable in {many} {other} applications, including workforce scheduling, warehouse operations, and e-commerce fulfillment. Though our model provides {useful} parsimonious insights for how to enable flexibility in these settings, {it would be interesting to extend our model to capture application-specific idiosyncrasies}. For example, in dynamic workforce scheduling, regulations may {place restrictions on shifts workers may be assigned to}; similarly, there may be correlations between delivery windows that customers find acceptable in e-commerce fulfillment. Although these types of constraints are unlikely to fundamentally affect the validity of the design principles that our work suggests, {incorporating them into models remains practically relevant}. {Moreover, the theoretical analysis of policies in further constrained settings is also likely to pose significant new challenges.}

%{We moreover conjecture that the theoretical analysis of policies in further constrained settings may pose significant new challenges}. \chdelete{Finally, even in opaque selling, there are possible extensions of our work. This includes the joint optimization of pricing and opaque selling policies as well as extensions of our algorithms to settings in which there are nonstationarities in (i) arriving demand's preferences for resources, and (ii) the probability with which arriving demand is flexible. Both of these extensions would facilitate practical deployments.}

% {For instance, though our work shows that opaque selling can be seamlessly integrated with traditional selling mechanisms to reduce inventory costs, it would be interesting to explore the joint optimization of pricing and opaque selling policies. Additionally, extending our algorithms to settings in which there are nonstationarities in (i) arriving demand's preferences for resources, and (ii) the probability with which arriving demand is flexible is an important practical direction.
}

% Bibliography
\SingleSpacedXI
\bibliographystyle{plainnat}

%\pagebreak
% Appendix
%\appendix
\begin{APPENDICES}{}
\OneAndAHalfSpacedXI
\newpage
% \begin{appendix}

\section{The Balls-Into-Bins Model}

\subsection{Useful Bounds} \label{sec:auxi}
In this section we prove a series of facts that will be useful to us throughout.

The first lemma presents Binomial tail bounds that we use throughout our analysis. We defer its proof to Appendix \ref{apx:chernoff}.
\begin{lemma} \label{lem:chernoff}
Consider two binomial random variables $X_i(t) \sim B(t,p_i), X_j(t) \sim B(t,p_j)$, where $X_i(t)$ and $X_j(t)$ need not be independent. %\chdelete{We have,for some} 
Then, for any $\epsilon > 0$:
\begin{enumerate}[label=(\roman*)]
    \item  $\mathbb{P}\left(\left|(X_j(t) - X_i(t)) - \mathbb{E}[X_j(t) - X_i(t)]\right| \geq \epsilon t\right) \leq 4e^{-\epsilon^2t/2}$, %\chdelete{$\mathbb{P}\left[\left|(X_j(t) - X_i(t)) - \mathbb{E}[X_j(t) - X_i(t)]\right| \geq \epsilon t\right] \leq e^{-\alpha t}$, and}    
    \item $\mathbb{P}\left(\left|(X_j(t) - X_i(t)) - \mathbb{E}[X_j(t) - X_i(t)]\right| \geq \epsilon \sqrt{t \log t}\right) \leq 4 t^{-\epsilon^2/2}.$ %\chdelete{$\mathbb{P}\left[\left|(X_j(t) - X_i(t)) - \mathbb{E}[X_j(t) - X_i(t)]\right| \geq \epsilon \sqrt{t \log(t)}\right] \leq \beta t^{-\frac{\epsilon^2}{12}}$}
\end{enumerate}
\end{lemma}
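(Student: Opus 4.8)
The plan is to sidestep the joint law of $(X_i(t), X_j(t))$ entirely by reducing the two-sided deviation of the \emph{difference} to one-sided deviations of the two marginals. First I would apply the triangle inequality,
\[
\bigl| (X_j(t) - X_i(t)) - \mathbb{E}[X_j(t) - X_i(t)] \bigr| \;\leq\; \bigl| X_j(t) - \mathbb{E}[X_j(t)] \bigr| + \bigl| X_i(t) - \mathbb{E}[X_i(t)] \bigr| ,
\]
so that, on the event that the left-hand side is at least $a$, at least one of the two terms on the right is at least $a/2$. A union bound then gives, for every $a > 0$,
\[
\mathbb{P}\bigl( \bigl| (X_j(t) - X_i(t)) - \mathbb{E}[X_j(t) - X_i(t)] \bigr| \geq a \bigr) \;\leq\; \mathbb{P}\bigl( | X_j(t) - \mathbb{E}[X_j(t)] | \geq a/2 \bigr) + \mathbb{P}\bigl( | X_i(t) - \mathbb{E}[X_i(t)] | \geq a/2 \bigr) ,
\]
and each probability on the right depends only on the marginal distribution $B(t,\cdot)$, so the permitted dependence between $X_i(t)$ and $X_j(t)$ plays no role.

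Next I would bound each marginal tail by the standard two-sided Chernoff/Hoeffding inequality for a binomial: since $B(t,p)$ is a sum of $t$ independent $[0,1]$-valued random variables, $\mathbb{P}( |X - \mathbb{E}[X]| \geq \lambda ) \leq 2 e^{-2\lambda^2/t}$ for all $\lambda > 0$. Taking $\lambda = a/2$ for each of $X_i(t)$ and $X_j(t)$ and summing the two bounds yields
\[
\mathbb{P}\bigl( \bigl| (X_j(t) - X_i(t)) - \mathbb{E}[X_j(t) - X_i(t)] \bigr| \geq a \bigr) \;\leq\; 4 e^{-a^2/(2t)} .
\]

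It then remains only to substitute the two choices of $a$. For part $(i)$, setting $a = \epsilon t$ makes the exponent $-\epsilon^2 t^2/(2t) = -\epsilon^2 t/2$, giving $4 e^{-\epsilon^2 t/2}$. For part $(ii)$, setting $a = \epsilon \sqrt{t\log t}$ makes the exponent $-\epsilon^2 t\log t/(2t) = -(\epsilon^2/2)\log t$, giving $4 e^{-(\epsilon^2/2)\log t} = 4 t^{-\epsilon^2/2}$. There is no genuine obstacle in this lemma; the only points requiring a little care are recognizing at the outset that the ``need not be independent'' caveat costs nothing once one splits the difference via the triangle inequality and a union bound, and invoking the form of the Chernoff/Hoeffding bound with constant $2$ in the exponent (valid for sums of $[0,1]$-valued summands) so that the constants in the statement come out exactly as written.
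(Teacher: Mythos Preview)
Your proposal is correct and follows essentially the same approach as the paper: split the centered difference via the triangle inequality, apply a union bound so that only the marginal tails matter, invoke Hoeffding's inequality $\mathbb{P}(|X-\mathbb{E}[X]|\geq\lambda)\leq 2e^{-2\lambda^2/t}$ for each binomial, and then specialize the resulting bound $4e^{-a^2/(2t)}$ to the two choices of $a$.
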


\medskip

\Cref{cl:ball_lb} next establishes that, for a policy that never exercises the flex option, there is a constant probability that the gap of the system exceeds $\Theta(\sqrt{T})$ by the end of the horizon. We defer its proof, which is a direct application of the Berry-Esseen Theorem, to Appendix \ref{apx:ball_lb}.

\begin{lemma}\label{cl:ball_lb}
For the no-flex policy $\pin$, for any $a>0,$ there exists a constant $a' > 0$ such that, for large enough $T$, $$\mathbb{P}\left(\Gapnf(T) \geq a \sqrt{T}\right) \geq a'.$$
\end{lemma}

\medskip

\Cref{lem:e1ij} bounds the likelihood that the loads of the maximally and minimally loaded bins never intersect during any set of periods over which a policy flexes consecutively. {We defer the proof of the lemma to Appendix \ref{apx:e1ij}.}
\begin{lemma}\label{lem:e1ij}
Consider any policy $\pi$ such that (i) $\flexaction^\pi(t) = 0$ for $t \leq t_1$, and (ii) $\flexaction^\pi(t) = 1$ for $t_1 < t \leq t_2$, where $t_2-t_1 \geq a_s\sqrt{t_2\log t_2}$. Moreover, let $F_{ij}^1$ be the event that $i$ and $j$ are respectively the maximally and minimally loaded bins in period $t_2$, and that their loads have never intersected between $t_1$ and $t_2$. Formally,  $$F_{ij}^1 = \left\{\history{t_2} \in \historyset{t_2}\mid i = \arg\max_{k\in[N]} \xp_k(t_2), j = \arg\min_{k\in[N]} \xp_k(t_2), \xp_i(t) \neq \xp_j(t), \forall \ t \in \{t_1, \ldots , t_2\}\right\}.$$ Then, there exist a constant $\alpha > 0$ such that: 
{\[\mathbb{P}(F_{ij}^1) \leq 4 {t}_1^{-1} + e^{-\alpha (t_2 - t_1)} \ \forall \ i,j \in [N].\]}
\end{lemma}

\smallskip

\Cref{lem:e1ij_semi} further tightens this bound, conditioned on the loads of bins $i$ and $j$ being within a constant factor of $t_2-t_1$ of each other when the policy begins flexing consecutively. {We defer its proof to Appendix \ref{apx:e1ij_semi}.}
\begin{lemma}\label{lem:e1ij_semi}
{Fix random variables $t_1 < t_2$, and} consider any policy $\pi$ such that (i) $\flexaction^\pi(t) = 0$ for $t \leq t_1$, and (ii) $\flexaction^\pi(t) = 1$ for $t_1 < t \leq t_2$. Let $F_{ij}^1$ be as in \Cref{lem:e1ij}, i.e.,
\begin{align*}F_{ij}^1 = \left\{\history{t_2} \in \historyset{t_2} \mid i = \arg\max_{k\in[N]} \xp_k(t_2), j = \arg\min_{k\in[N]} \xp_k(t_2), \xp_i(t) \neq \xp_j(t), \forall \ t \in \{t_1, \ldots , t_2\}\right\}.\end{align*} 
For any constants $\policycst \leq \frac{1}{5 \binom{N}{2}}$ and $a > 0$, there exist constants $\alpha_0 > 0$ and $t_0 > 0$ such that \[\PP{F_{ij}^1 \, | \, \xp_i(t_1) - \xp_j(t_1) \leq \policycst(t_2-t_1)q + a{, t_1, t_2, t_2 - t_1 \geq t_0}} \leq 5 e^{-\alpha_0(t_2 - t_1)}.\]
\end{lemma}

\medskip

\Cref{lem:e2ij} now bounds the likelihood that, for any such ``consecutively-flexing'' policy described above, the maximum and minimally loaded bins have different loads, given the amount of time that has elapsed since the last time they intersected during the flexing period. We defer its proof of Appendix \ref{apx:e2ij}.
\begin{lemma}\label{lem:e2ij}
Fix random variables $t_1 < t_2$, and consider any policy $\pi$ such that (i) $\flexaction^\pi(t) = 0$ for $t \leq t_1$, and (ii) $\flexaction^\pi(t) = 1$ for $t_1 < t \leq t_2$. Let $F_{ij}^2$ be the event that $i$ and $j$ are respectively the (strictly) maximally and minimally loaded bins in period $t_2$, and that their loads intersected between $t_1$ and $t_2-1$. Formally,
\begin{align*}
    F_{ij}^2 := \{&\history{t_2} \in \historyset{t_2}\mid i = \arg\max_{k\in[N]} \xp_k(t_2), j = \arg\min_{k\in[N]} \xp_k(t_2), \xp_i(t_2) \neq \xp_j(t_2), \notag\\
    &\hspace{2.5cm}\xp_i(t) = \xp_j(t) \text{ for some } t \in \{t_1, \ldots , t_2-1\}\}.
\end{align*}
Moreover, let $\tau := \max \left\{t \, \mid \, \xp_i(t) = \xp_j(t), t \in \{t_1, \ldots , t_2-1\}\right\}$ be the last time the loads of these two bins intersected.
Then, there exists a constant $\alpha_2 > 0$ such that:
\[\mathbb{P}(F_{ij}^2 \ \mid t_1, t_2, \tau) \leq 5 e^{-\alpha_2 (t_2 - \tau)}\quad \forall \ i,j \in [N].\]
\end{lemma}

\medskip

\subsubsection{Proof of \Cref{lem:chernoff}}\label{apx:chernoff}

\begin{proof}{Proof.}
We first prove (i). We have:
\begin{align}\label{eq:chernoff}
         &   \mathbb{P}\left(\left|(X_j(t) - X_i(t)) - \mathbb{E}[X_j(t) - X_i(t)]\right| \geq \epsilon t\right) \notag \\
        &\leq \mathbb{P}\left(\left|(X_j(t) - \EE{X_j(t)}\right| + \left|X_i(t) - \EE{X_i(t)}\right| \geq \epsilon t\right) \notag \\
        &\leq  \mathbb{P}\left(\left|X_j(t) - \mathbb{E}[X_j(t)]\right| \geq \frac{\epsilon}{2} t\right) + \mathbb{P}\left(\left|X_i(t) - \mathbb{E}[X_i(t)]\right| \geq \frac{\epsilon}{2} t\right),
\end{align}
where the second inequality follows from a union bound.
By Hoeffding's inequality~\citep{dubhashi2009concentration}, for all $k \in [N]$,
\begin{align*}
      \mathbb{P}\left(\left|X_k(t) - \mathbb{E}[X_k(t)]\right| \geq \frac{\epsilon}{2} t\right)\leq 2\exp\left(-\frac{\epsilon^2t}{2}\right).
\end{align*}

Plugging this back into \eqref{eq:chernoff}, we obtain:
\begin{align*}
    \mathbb{P}\left(\left|(X_j(t) - X_i(t)) - \mathbb{E}[X_j(t) - X_i(t)]\right| \geq \epsilon t\right) \leq 4e^{-\epsilon^2t/2}.
\end{align*}
% \chdelete{
% $$\mathbb{P}\left[\left|(X_j(t) - X_i(t)) - \mathbb{E}[X_j(t) - X_i(t)]\right| \geq \epsilon t\right] \leq e^{-\alpha t}$$ 
% for some constant $\alpha > 0.$}

For (ii), we similarly have that
\begin{align}\label{eq:lem-part-2}
            &\mathbb{P}\left(\left|(X_j(t) - X_i(t)) - \mathbb{E}[X_j(t) - X_i(t)]\right| \geq \epsilon \sqrt{t \log t}\right) \notag \\
        &\leq \mathbb{P}\left(\left|X_j(t) - \mathbb{E}[X_j(t)]\right| \geq \frac{\epsilon}{2} \sqrt{t \log t}\right) + \mathbb{P}\left(\left|X_i(t) - \mathbb{E}[X_i(t)]\right| \geq \frac{\epsilon}{2} \sqrt{t \log t }\right).
\end{align}
As before, by Hoeffding's inequality, for all $k \in [N]$,
\begin{align*}
    \mathbb{P}\left(\left|X_k(t) - \mathbb{E}[X_k(t)]\right| \geq \frac{\epsilon}{2} \sqrt{t \log t}\right) &\leq 2\exp\left(-\frac{\epsilon^2 t\log t}{2t}\right)
    = 2t^{-\epsilon^2/2}.
\end{align*}
We thus obtain:
\begin{align*}
     \mathbb{P}\left(\left|(X_j(t) - X_i(t)) - \mathbb{E}[X_j(t) - X_i(t)]\right| \geq \epsilon \sqrt{t \log t}\right)\leq 4 t^{-\epsilon^2/2}.
\end{align*}
% \chdelete{
% Thus, $$\mathbb{P}\left[\left|(X_j(t) - X_i(t)) - \mathbb{E}[X_j(t) - X_i(t)]\right| \geq \epsilon \sqrt{t \log t}\right] \leq \beta t^{-\frac{\epsilon^2}{12}},$$ where $\beta = \beta_1 + \beta_2.$
% }
\end{proof}

\subsubsection{Proof of \Cref{cl:ball_lb}}\label{apx:ball_lb}
\begin{proof}{Proof.}
We have:
\begin{align*}
    \mathbb{P}(\Gapnf(T) \geq a \sqrt{T}) &= \mathbb{P}\left(\max_{i'} x_{i'}(T) - \frac{T}{N} \geq a \sqrt{T}\right)
    \geq \mathbb{P}\left(x_1(T)- \frac{T}{N} \geq a \sqrt{T}\right)
    % &= \mathbb{P}\left[\frac{x_1(T)- \frac{T}{N}}{\sqrt{T} \sigma} \geq \frac{a \sqrt{T}}{\sqrt{T} \sigma}\right]\\
    = \mathbb{P}\left(\frac{x_1(T)- \frac{T}{N}}{\sqrt{T} \sigma} \geq \frac{a}{\sigma}\right),
\end{align*}
where $\sigma = \frac{1}{N}(1-\frac{1}{N}).$ {By the Berry{-}Esseen Theorem (Theorem 3.4.17 in \citet{durrett2019probability}):}
\begin{align*}
\mathbb{P}\left(\frac{x_1(T)- \frac{T}{N}}{\sqrt{T} \sigma} \geq \frac{a}{\sigma}\right) \geq 1-\Phi\left(\frac{a}{\sigma}\right)-\frac{b}{\sqrt{T}}\notag \geq a',
\end{align*}
for some constants $a'$, $b$ and large enough $T$.
\end{proof}

\medskip

\subsubsection{Proof of \Cref{lem:e1ij}}\label{apx:e1ij}
The proofs of \Cref{lem:e1ij,lem:e1ij_semi,lem:e2ij} rely on a class of \emph{fictional} allocation rules that we couple to any flexing policy $\pi$. A fictional allocation rule is parameterized by bins $i$ and $j$, and is denoted by $\mathcal{A}_{ij}$. This allocation rule mimics $\mathcal{A}^{\pi}$ before $t_1$. After $t_1$, it similarly mimics $\mathcal{A}^\pi$ in all periods such that the flex option cannot be exercised by $\pi$ (i.e., $f(t)\omega^\pi(t) = 0$). In periods when $\pi$ can exercise the flex option (i.e., $f(t)\omega^\pi(t) = 1$), $\mathcal{A}_{ij}$ also makes the same decisions as $\pi$, except in two scenarios: (i) $\mathcal{F}(t) = \{i,j\}$ or (ii) $\mathcal{F}(t) = \{k,j\}$ for some $k \neq i$, with $x_{k}^\pi(t) \geq x_i^\pi(t)$. In both of these cases, it allocates the ball to bin $j$. At a high level, $\mathcal{A}_{ij}$ favors bin $j$ over bin $i$ {regardless of the relative loads in bin $i$ and $j$. This allows us to analyze the allocation decision in any period $t > t_1$ independent of $x_j^\pi(t)$}. We formalize this fictional allocation rule below.

\begin{definition} \label{def:fic_allocation}
For $i, j \in [N]$, the fictional allocation rule $\mathcal{A}_{ij}$ is defined as follows. For all $t \leq t_1$, \mbox{$\mathcal{A}_{ij}(t) = \mathcal{A}^\pi(t)$}.  For all $t > t_1$:
$$
\mathcal{A}_{ij}(t):= \begin{cases}
    j \quad &\text{if } \flex{t}\flexaction^\pi(t) = 1 \text{ and } \flexset{t} = \{i,j\}, \\
    j \quad &\text{if } \flex{t}\flexaction^\pi(t) = 1, \flexset{t} = \{k,j\} \text{ for some } k \neq i \text{ and }\arg\min_{j' \in \{i,k\}} x^{\pi}_{j'}(t) = i, \\
    \mathcal{A}^{\pi}(t) &\text{otherwise}.
    \end{cases}
$$
\end{definition}

The following lemma provides probabilistic bounds on the number of flex balls thrown into $i$ and $j$ under $\mathcal{A}_{ij}$. We defer its proof to the end of this section.{
\begin{lemma}\label{lem:flexes}
Let $\mathbf{x}^\pi(t_1) = (x_k^\pi(t_1))_{k \in [N]}$ denote the loads across different bins at the end of period $t_1$, and suppose the fictional allocation rule runs for $\bar{t}$ periods after $t_1$. Additionally, for $k \in [N]$, let $Y_k(\bar{t})$ be the number of flex balls placed into bin $k$ between $t_1$ and $t_1+\bar{t}$ under $\mathcal{A}_{ij}$. Then, there exists a constant $\alpha > 0$ such that, for any $\mathbf{x}^\pi(t_1)$:
\begin{enumerate}[label=(\roman*)]
    \item $\mathbb{E}[Y_j(\bar{t})-Y_i(\bar{t}) \mid \mathbf{x}^\pi(t_1)] \geq \frac{q}{\binom{N}{2}}\bar{t}$,
    \item $\mathbb{P}\left(Y_j(\bar{t})-Y_i(\bar{t}) \leq \frac{q}{2 \binom{N}{2}}\bar{t} \mid \mathbf{x}^\pi(t_1)\right) \leq e^{-\alpha \bar{t}}$.
\end{enumerate}
\end{lemma}}

\medskip

\begin{proof}{Proof of \Cref{lem:e1ij}.}
We prove the claim via the coupling between $\mathcal{A}^\pi$ and $\mathcal{A}_{ij}$. We first introduce some additional notation. For bin $k \in [N]$, let $x_k^{\text{fic}}(t)$ denote the load in bin $k$ and time~$t$ under the {\it fictional} allocation policy $\mathcal{A}_{ij}$. We let $Y_k$ denote the number of flex balls that land in bin~$k$ between $t_1+1$ and $t_2$ under $\mathcal{A}_{ij}$, and define $Y := \sum_{k = 1}^N Y_k$. Note that $Y\sim B\left(t_2-t_1,q\right)$. {We use $\overline{T} = t_2-Y$ to denote the total number of random (non-flex) throws throughout $\{1,2,...,t_2\}$, and let $Z_k$ be the number of balls that landed in bin $k$ during the $\overline{T}$ random trials. Thus, for $k\in [N]$, $x^{\text{fic}}_k(t_2) = Y_k + Z_k.$}

Note that, under $F_{ij}^1$, $\xp_i(t) > \xp_j(t) \ \forall \ t \in \{t_1,...,t_2\}$. By \Cref{def:fic_allocation}, this implies that both $\pi$ and the fictional allocation rule pick bin $j$ whenever $\mathcal{F}(t) = \{i,j\}$. On the other hand, whenever $\mathcal{F}(t) = \{k,j\}$ for some $k \neq i$ such that $\arg\min_{j' \in \{i,k\}}x_{j'}^\pi(t) = i$, it must be the $x_k^\pi(t) \geq x_i^\pi(t) > x_j^\pi(t)$. Therefore, both $\pi$ and the fictional allocation rule would place the ball in bin $j$ in this case as well. As a result, the two rules make identical decisions in $\{t_1, .., t_2\}$. Since they also make identical decisions prior to $t_1$, it must be that $x^{\text{fic}}_{k}(t) = \xp_k(t) \ \forall \ t \in \{t_1,...,t_2\}$. Thus, we have:
\begin{align}\label{eq:fictitious}
    F_{ij}^1 &= \Bigg\{\history{t_2} \in \historyset{t_2}\mid \xp_i(t_2) = \max_{k} \xp_{k}(t_2), \, \xp_{j}(t_2) = \min_{k} \xp_{k}(t_2), \, \xp_i(t) > \xp_j(t) \ \forall \ t \in \{t_1,...,t_2\}, \notag \\
     &\hspace{3.3cm} x_{i}^{\text{fic}}(t) > x_{j}^{\text{fic}}(t) \, \forall \, t \in \{t_1,...,t_2\} \Bigg\}\notag \\
     &\subseteq \left\{\history{t_2} \in \historyset{t_2} \mid x_i^{\text{fic}}(t) > x_j^{\text{fic}}(t) \, \forall \, t \in \{t_1,...,t_2\}\right\} \notag \\
     &\subseteq \left\{Y_i + Z_i > Y_j + Z_j\right\}.
\end{align}

As a result, it suffices to bound the likelihood that, by $t_2$, the fictional allocation rule has allocated more balls to bin $i$ than to bin $j$. Namely:
\begin{align}
    \PP{F_{ij}^1} &\leq \PP{Y_i + Z_i > Y_j + Z_j} 
    \leq \PP{Z_i - Z_j \geq \frac{q}{2{\binom{N}{2}}} (t_2-t_1)} + \PP{Y_j - Y_i \leq \frac{q}{2{\binom{N}{2}}} (t_2-t_1)} \label{eq:two_union}.
\end{align}
Note that the last inequality holds because we need at least one of $$Z_i - Z_j \geq \frac{q}{2{\binom{N}{2}}} (t_2-t_1) \text{ and } Y_j - Y_i \leq \frac{q}{2{\binom{N}{2}}} (t_2-t_1)$$ for $Y_i + Z_i > Y_j + Z_j$ to hold.

{Recall, $t_2-t_1 \geq \staticcst \sqrt{t_2\log t_2}$ by assumption. Thus, for $\overline{T} \in \{t_1,...,t_2\}$ we have:
\begin{align}
   \PP{Z_i - Z_j \geq \frac{q}{2{\binom{N}{2}}} (t_2-t_1) \mid \overline{T} = t}  &\leq \PP{Z_i - Z_j \geq \frac{q}{2{\binom{N}{2}}} \staticcst \sqrt{t_2\log t_2}\mid \overline{T} = t} \notag \\
   &\leq  \PP{Z_i - Z_j \geq \frac{q}{2{\binom{N}{2}}} \staticcst \sqrt{\overline{T}\log \overline{T}} \mid \overline{T} = t} \notag \\
   &\leq \PP{Z_i - Z_j \geq \sqrt{2}\sqrt{\overline{T}\log \overline{T}}\mid \overline{T} = t}, \label{eq:def-of-a-static}
  \end{align}
where \eqref{eq:def-of-a-static} follows from plugging in $\staticcst \geq \frac{2\sqrt{2}{N\choose 2}}{q}$.
  
Recall, $Z_i, Z_j$ respectively denote the number of {\it random} balls that landed in bins $i$ and $j$ during $\overline{T}$ periods. Thus, conditioned on $\overline{T}$, $Z_i$ and $Z_j$ are both binomially distributed, with parameters $(\overline{T},1/N)$. Applying \cref{lem:chernoff} (ii) to \eqref{eq:def-of-a-static}, with $\epsilon = \sqrt{2},$ we then have: 
\begin{align}
    \PP{Z_i - Z_j \geq \frac{q}{2{\binom{N}{2}}} (t_2-t_1) \mid \overline{T} = t} \leq 4 t^{-1} \leq 4 t_1^{-1}, \label{eq:applying-chernoff}
\end{align}
since $\overline{T} \geq t_1$ almost surely.}
{
}

Moreover, applying \cref{lem:flexes} (ii) to $\PP{Y_j - Y_i \leq \frac{q}{2{\binom{N}{2}}} (t_2-t_1)}$, with \mbox{$\bar{t}:= t_2-t_1$} we have:
\begin{align}\label{eq:y}
\mathbb{P}\left(Y_j - Y_i \leq \frac{q}{2 \binom{N}{2}}(t_2-t_1) \right)\leq e^{-\alpha(t_2-t_1)},
\end{align}
for some constant $\alpha > 0.$\footnote{Here we abuse notation when using $Y_j$ and $Y_i$ above in omitting their dependency on $(t_2-t_1).$} Plugging {\eqref{eq:applying-chernoff} and \eqref{eq:y}} into \eqref{eq:two_union}, we obtain the result.
\end{proof}

\medskip

We now prove \Cref{lem:flexes}, the key lemma driving this result.
\begin{proof}{Proof of \cref{lem:flexes}.}
We abuse notation and let $M = \sum_{t = 1}^{\bar{t}} \mathbbm{1}\{\flexset{t} = \{i,j\}\}$ be the number of times the flex set is $\{i,j\}$ during $\bar{t}$ flexible throws. We have:
\begin{align*}
        \mathbb{E}[Y_j(\bar{t})-Y_i(\bar{t})\mid M, \mathbf{x}^\pi(t_1)]
         &= \mathbb{E}[Y_j(\bar{t})\mid M, \mathbf{x}^\pi(t_1)] -\mathbb{E}[Y_i(\bar{t})\mid M, \mathbf{x}^\pi(t_1)]\\
         &= M + \sum_{t=1}^{\bar{t}} \mathbb{P}\left(\{\flexset{t} = \{j,k\} \text{ for some } k \neq i\} \cap \{\mathcal{A}_{ij}(t) = j\}\mid M, \mathbf{x}^\pi(t_1)\right)\\&\qquad-\sum_{t=1}^{\bar{t}}\mathbb{P}\left(\{\flexset{t} = \{i,k\} \text{ for some } k \neq j\} \cap \{\mathcal{A}_{ij}(t) = i\}\mid M, \mathbf{x}^\pi(t_1)\right),
\end{align*}
where the second equality follows from the fact that, by construction, whenever $\flexset{t} = \{i,j\}$ (this happens $M$ times, by definition), the ball is allocated to bin $j$ under allocation rule $\mathcal{A}_{ij}$. 
{By symmetry, $i$ and $j$ are equally likely to be included in $\mathcal{F}(t)$. Combining this with the $\mathcal{A}_{ij}$ construction, which places a ball into bin $j$ whenever the policy would have placed it into $i$ rather than $k$, we have:
\begin{align*}
% &\sum_{t=1}^{\bar{t}} \mathbb{P}[\{\flexset{t} = \{j,k\} \text{ for some } k \neq i\} \cap \{\mathcal{A}_{ij}(t) = j\}\mid M]\geq\sum_{t=1}^{\bar{t}}\mathbb{P}[\{\flexset{t} = \{i,k\} \text{ for some } k \neq j\} \cap \{\mathcal{A}_{ij}(t) = i\}\mid M] \\
% \implies &
&\mathbb{E}[Y_j(\bar{t})-Y_i(\bar{t})\mid M, \mathbf{x}^\pi(t_1)] \geq M  \\
\implies &\mathbb{E}[Y_j(\bar{t})-Y_i(\bar{t}) \mid \mathbf{x}^\pi(t_1)] = \mathbb{E}[\mathbb{E}[Y_j(\bar{t})-Y_i(\bar{t})|M, \mathbf{x}^\pi(t_1)]] \geq \mathbb{E}[M] = \frac{q}{\binom{N}{2}}\bar{t},
\end{align*}
}
and we obtain \cref{lem:flexes} (i).

\medskip

We now prove \cref{lem:flexes} (ii). Let {$W_{kt}$} be the random indicator variable representing whether a ball in period $t$ is a flex ball that goes into bin $k$, for $k \in [N], t\in [\bar{t}]$. Then, by definition, $Y_k(\bar{t}) = \sum_{t = 1}^{\bar{t}}W_{kt}$. Define moreover  $Z_\tau = \left(\sum_{t = 1}^{\tau} W_{jt} - W_{it}\right) - \frac{q}{\binom{N}{2}}\tau$ for $\tau = 1, 2, ..., \bar{t}$, with $Z_0 = 0$.
Then,
\begin{align}\label{eq:get-to-submartingale}
    \mathbb{P}\left(Y_j(\bar{t})-Y_i(\bar{t}) \leq \frac{q}{2 \binom{N}{2}}\bar{t} \mid \mathbf{x}^\pi(t_1)\right) &= \mathbb{P}\left(\sum_{t = 1}^{\bar{t}} (W_{jt} - W_{it}) \leq \frac{q}{2 \binom{N}{2}}\bar{t} \mid \mathbf{x}^\pi(t_1)\right) \notag 
    \\ &=\mathbb{P}\left(Z_{\bar{t}} - Z_0 \leq -\frac{q}{2 \binom{N}{2}}\bar{t} \mid \mathbf{x}^\pi(t_1)\right).
\end{align}

In order to bound \eqref{eq:get-to-submartingale}, we argue that, conditioned on $\mathbf{x}^\pi(t_1)$, the sequence $Z_0,Z_1,Z_2,\ldots$ is a sub-martingale. To see this, note that:
\begin{align}\label{eq:submartingale}
    \mathbb{E}[Z_{\tau+1} \mid Z_0, ..., Z_\tau, \mathbf{x}^\pi(t_1)] &=\mathbb{E}\left[\left(\sum_{t = 0}^{\tau+1} W_{jt} - W_{it}\right) - \frac{q}{\binom{N}{2}}(\tau+1)\; \bigg{\vert}\; Z_0, ..., Z_\tau, \mathbf{x}^\pi(t_1)\right] \notag \\
    &= \mathbb{E}\left[W_{j,\tau+1} - W_{i,\tau+1} -\frac{q}{\binom{N}{2}}  + Z_\tau \; \bigg{\vert} \; Z_0, ..., Z_\tau, \mathbf{x}^\pi(t_1)\right] \notag \\
    &= \mathbb{E}\left[W_{j,\tau+1} - W_{i,\tau+1} \; \bigg{\vert} \; Z_0, ..., Z_\tau, \mathbf{x}^\pi(t_1)\right] -\frac{q}{\binom{N}{2}}  + Z_\tau.
\end{align}

Note that a flex ball lands in bin $j$, i.e., $W_{jt}-W_{it} = 1,$ if one of two events occurs: (i) \mbox{$\flexset{t} = \{i,j\}$}, in which case the ball is always thrown into bin $j$, or (ii) $\flexset{t} = \{j,k\}$ for some $k \neq i,j$, and the ball is thrown into bin $j$. The first event occurs with probability $\frac{q}{\binom{N}{2}}$, and the second with probability $\mathbb{P}\left(\{\flexset{t} = \{j,k\} \text{ for some } k \neq i\} \cap \{\mathcal{A}_{ij}(t) = j\}\right).$ As a result:
\begin{align*}
&\PP{W_{j,\tau+1}-W_{i,\tau+1} = 1 \mid Z_0, ..., Z_\tau, \mathbf{x}^\pi(t_1)} \\ &= \frac{q}{\binom{N}{2}} + \mathbb{P}\left(\{\flexset{\tau+1} = \{j,k\} \text{ for some } k \neq i\} \cap \{\mathcal{A}_{ij}(\tau+1) = j\} \mid Z_0, ..., Z_\tau, \mathbf{x}^\pi(t_1)\right).
\end{align*}
Via similar reasoning, it follows that 
\begin{align*}
&\PP{W_{j,\tau+1}-W_{i,\tau+1} = -1 \mid Z_0, ..., Z_\tau, \mathbf{x}^\pi(t_1)}\\
&= \mathbb{P}\left(\{\flexset{\tau+1} = \{i,k\} \text{ for some } k \neq j\} \cap \{\mathcal{A}_{ij}(\tau+1) = i\} \mid Z_0, ..., Z_\tau, \mathbf{x}^\pi(t_1)\right).
\end{align*}

Plugging this into \eqref{eq:submartingale}:
\begin{align}
   & \mathbb{E}[Z_{\tau+1} \mid Z_0, ..., Z_\tau, \mathbf{x}^\pi(t_1)] \notag \\& =\frac{q}{\binom{N}{2}} + \mathbb{P}\left(\{\flexset{\tau + 1} = \{j,k\} \text{ for some } k \neq i\} \cap \{\mathcal{A}_{ij}(\tau + 1) = j\} \mid Z_0, ..., Z_\tau, \mathbf{x}^\pi(t_1)\right) \notag\\
    &\quad - \mathbb{P}\left(\{\flexset{\tau + 1} = \{i,k\} \text{ for some } k \neq j\} \cap \{\mathcal{A}_{ij}(\tau + 1) = i\} \mid  Z_0, ..., Z_\tau, \mathbf{x}^\pi(t_1)\right) - \frac{q}{\binom{N}{2}} + Z_\tau \notag \\
    &\geq Z_\tau,
\end{align}
where the final inequality follows from the same arguments as those used to derive \cref{lem:flexes} (i) above. Therefore, conditioned on $\mathbf{x}^\pi(t_1)$, $(Z_t)_{t\in\{1,\ldots,\bar{t}\}}$ is a submartingale. Applying Azuma's inequality \citep{chung2006concentration} to \eqref{eq:get-to-submartingale}, we obtain:
\begin{align*}
    \mathbb{P}\left(Y_j(\bar{t})-Y_i(\bar{t}) \leq \frac{q}{2 \binom{N}{2}}\bar{t} \mid \mathbf{x}^\pi(t_1)\right) = \mathbb{P}\left(Z_{\bar{t}} - Z_0 \leq -\frac{q}{2 \binom{N}{2}}\bar{t} \mid \mathbf{x}^\pi(t_1)\right) \leq e^{-\frac{2 \left(\frac{q}{2 \binom{N}{2}}\bar{t}\right)^2}{\bar{t}}} \leq e^{-\alpha \bar{t}},
\end{align*}
for some constant $\alpha > 0$.
\end{proof}

\subsubsection{Proof of \Cref{lem:e1ij_semi}}\label{apx:e1ij_semi}
\begin{proof}{Proof.}
Let $t_0 = \frac{2a}{q\left(\frac{1}{2{N\choose 2}}-2a_p\right)}$. In this proof we similarly make use of the fictional allocation rule $\mathcal{A}_{ij}$ (see \cref{def:fic_allocation}).

Fix bin $k$, and let $Y_k$ denote the number of flex balls that go into bin $k$ between $t_1 + 1$ and~$t_2$ as a result of $\mathcal{A}_{ij}$. We moreover let $Y := \sum_{k = 1}^N Y_k$. Let $Z_k$ denote the number of balls that land in bin $k$ during the random (non-flexible) throws between $t_1 + 1$ and~$t_2$. Thus, for~$k\in [N]$, the total number of balls that land in bin $k$ is $x^{\text{fic}}_k(t_2) := \xp_k(t_1) + Y_k + Z_k.$

Moreover, under $F_{ij}^1$, we have $\xp_i(t) > \xp_j(t) \, \forall \,  t \geq t_1 + 1$. By the same arguments as those used in \Cref{lem:e1ij}, $\pi$ and the fictional allocation rule make identical decisions in $\{t_1,\ldots,t_2\}$, with $x_k^\pi(t) = x_k^{\text{fic}}(t)$ for all $t$. Moreover, it suffices to bound the probability that $x_{i}^{\text{fic}}(t_2) > x_j^{\text{fic}}(t_2)$. Namely,
\begin{align}
    &\PP{F_{ij}^1\, |\, \xp_i(t_1) - \xp_j(t_1) \leq \policycst(t_2-t_1)q + a, t_1, t_2, t_2 - t_1 \geq t_0} \notag\\
    &\leq  \PP{x_i^{\text{fic}}(t_1) + Z_i + Y_i > x_j^{\text{fic}}(t_1) + Z_j + Y_j\, |\, x_i^{\text{fic}}(t_1) - x_j^{\text{fic}}(t_1) \leq \policycst(t_2-t_1)q + a, t_1, t_2, t_2 - t_1 \geq t_0}\notag \\
    &=  \mathbb{P}\left(x_i^{\text{fic}}(t_1) - x_j^{\text{fic}}(t_1)> \left(Z_j- Z_i\right) + (Y_j-Y_i)\, | \, x_i^{\text{fic}}(t_1) - x_j^{\text{fic}}(t_1) \leq \policycst(t_2-t_1)q + a, t_1, t_2, t_2 - t_1 \geq t_0\right) \notag \\
    &\leq  \mathbb{P}\left(\policycst(t_2-t_1)q + a> \left(Z_j- Z_i\right) + (Y_j-Y_i) \mid x_i^{\text{fic}}(t_1) - x_j^{\text{fic}}(t_1) \leq \policycst(t_2-t_1)q + a, t_1, t_2, t_2 - t_1 \geq t_0\right) \notag \\
    &\leq  \mathbb{P}\left(Z_i - Z_j \geq \policycst(t_2-t_1)q + a \mid x_i^{\text{fic}}(t_1) - x_j^{\text{fic}}(t_1) \leq \policycst(t_2-t_1)q + a, t_1, t_2, t_2 - t_1 \geq t_0\right)\notag \\
    &\qquad + \mathbb{P}\left(Y_j-Y_i \leq 2\left(\policycst(t_2-t_1)q + a\right) \mid x_i^{\text{fic}}(t_1) - x_j^{\text{fic}}(t_1) \leq \policycst(t_2-t_1)q + a, t_1, t_2, t_2 - t_1 \geq t_0\right)\label{eq:conditional-eij-1},
\end{align}
where the last inequality comes from the fact that at least one of \begin{align*}\left\{Z_i - Z_j \geq \policycst(t_2-t_1)q + a\right\} \quad \text{ and } \quad \left\{Y_j-Y_i \leq 2\left(\policycst(t_2-t_1)q + a\right)\right\}\end{align*}
must hold for $\policycst(t_2-t_1)q + a> \left(Z_j- Z_i\right) + (Y_j-Y_i)$ to be true.

Recall, $Z_i$ and $Z_j$ respectively denote the number of {\it random} balls that land in bins $i$ and $j$ between $t_1+1$ and $t_2$. Thus, $Z_i$ and $Z_j$ are both binomially distributed, with $t_2-t_1$ trials and $p_i = p_j = \frac{1-q}{N}$. We then have:
\begin{align*}
&\mathbb{P}\left(Z_i - Z_j \geq \policycst(t_2-t_1)q + a \mid x_i^{\text{fic}}(t_1) - x_j^{\text{fic}}(t_1) \leq \policycst(t_2-t_1)q + a, t_1, t_2, t_2 - t_1 \geq t_0 \right)\\
&= \mathbb{P}\left(Z_i - Z_j \geq \policycst(t_2-t_1)q + a \mid t_1, t_2, t_2 - t_1 \geq t_0 \right)\\ &=  \mathbb{P}\left(\left(Z_i - Z_j\right) - \mathbb{E}[Z_i-Z_j] \geq \policycst(t_2-t_1)q + a \mid t_1, t_2, t_2 - t_1 \geq t_0\right)\\
&\leq \mathbb{P}\left(\left(Z_i - Z_j\right) - \mathbb{E}[Z_i-Z_j] \geq \policycst(t_2-t_1)q \mid t_1, t_2, t_2 - t_1 \geq t_0\right)\\
&\leq 4 e^{-\alpha_1(t_2-t_1)},
\end{align*}
for some constant $\alpha_1 > 0$, by \cref{lem:chernoff} (i) (where we let $\epsilon = \policycst q$.)

{We now analyze the probabilistic bound on $Y_j - Y_i$.} {Since $\policycst \leq \frac{1}{5 \binom{N}{2}}$ and $t_0 = \frac{2a}{q\left(\frac{1}{2{N\choose 2}}-2a_p\right)}$,} for \mbox{$t_2 - t_1 \geq t_0$}, we have $2\left(\policycst(t_2-t_1)q + a\right) \leq \frac{q}{2\binom{N}{2}}(t_2-t_1)$. Thus,
\begin{align*}
&\mathbb{P}\left(Y_j-Y_i \leq 2\left(\policycst(t_2-t_1)q + a\right)\mid x_i^{\text{fic}}(t_1) - x_j^{\text{fic}}(t_1) \leq \policycst(t_2-t_1)q + a, t_1, t_2, t_2 - t_1 \geq t_0\right)\\
&\leq \mathbb{P}\left(Y_j - Y_i \leq \frac{q}{2 \binom{N}{2}}(t_2-t_1)\mid x_i^{\text{fic}}(t_1) - x_j^{\text{fic}}(t_1) \leq \policycst(t_2-t_1)q + a, t_1, t_2, t_2 - t_1 \geq t_0\right)\\
&\leq e^{-\alpha_2(t_2-t_1)}.
\end{align*}
for some constant $\alpha_2 > 0$, {where the second inequality follows from \cref{lem:flexes} (ii).} 

Plugging these two bounds back into~\eqref{eq:conditional-eij-1}, and letting $\alpha_0 = \min\{\alpha_1,\alpha_2\}$, we obtain the result.
% \begin{align*}
% \PP{F_{ij}^1} \leq 4 e^{-\alpha_0(t_2-t_1)}+e^{-\alpha_1(t_2-t_1)} \quad \forall\, t_2-t_1 \geq t_0.
% \end{align*}
\end{proof}

\medskip

\subsubsection{Proof of \Cref{lem:e2ij}}\label{apx:e2ij}
\begin{proof}{Proof.}
We again use the $\mathcal{A}_{ij}$ coupling (see \Cref{def:fic_allocation}). Conditional on $t_1, t_2$ and $\tau,$ we fix bin $k$ and {re-define}~$Y_k$ to be the number of flex balls that go into bin $k$ between $\tau+1$ and $t_2$ as a result of $\mathcal{A}_{ij}$. {As before, we let} $Y = \sum_{k = 1}^N Y_k$. Note that $Y\sim B\left(t_2-\tau,q\right)$. Moreover, let $Z_k$ denote the number of balls that landed in bin $k$ from the random (non-flex) throws between $\tau+1$ and $t_2$. Thus, for $k\in [N]$, the total number of balls that land in bin $k$ from $\tau+1$ to $t_2$ is $x'_k(t_2) := Y_k + Z_k.$

Conditional on $F_{ij}^2$, $\xp_i(t) > \xp_j(t) \ \forall \ t \in \{\tau + 1,\ldots,t_2\}$. Therefore, using the same arguments as those used in the proof of \Cref{lem:e1ij}, under event $F_{ij}^2$, $\mathcal{A}_{ij}$ and $\mathcal{A}^\pi$ make identical decisions for all periods in $\{\tau+1, .., t_2\}$. Moreover, it suffices to bound the probability of the event that the fictional allocation rule placed more balls in bin $i$ than in bin $j$ between $\tau+1$ and $t_2$. We have:
\begin{align}\label{eq:union2}
    \PP{F_{ij}^2\mid t_1, t_2,\tau} &\leq \PP{Y_i + Z_i > Y_j + Z_j\mid t_1, t_2,\tau} \notag \\
    &\leq \PP{Z_i - Z_j > \frac{q}{2 {\binom{N}{2}}} (t_2-\tau)\mid t_1, t_2,\tau} + \PP{Y_j - Y_i < \frac{q}{2{\binom{N}{2}}} (t_2-\tau)\mid t_1, t_2,\tau}.
\end{align}

Recall, $Z_i, Z_j$ respectively denote the number of {\it random} balls that landed in bins~$i$ and~$j$ from~$\tau+1$ to~$t_2$. Thus, $Z_i$ and $Z_j$ are both binomially distributed, with $t_2-\tau$ trials and $p_i = p_j = (1-q)/N$. Applying \Cref{lem:chernoff} (i) to $Z_i$ and $Z_j$, with $\epsilon = \frac{q}{2{\binom{N}{2}}}$, we obtain:
\begin{align}
   \PP{Z_i - Z_j > \frac{q}{2{\binom{N}{2}}} (t_2-\tau) \, \bigg{|} \, t_1, t_2,\tau}
   &\leq  4\exp\left(-\frac{q^2}{8{\binom{N}{2}}^2} (t_2 - \tau)\right). \label{ineq5}
\end{align}

By \cref{lem:flexes} (ii), we also have:
\begin{align*}\PP{Y_j - Y_i < \frac{q}{2{\binom{N}{2}}} (t_2-\tau)\mid t_1, t_2,\tau} \leq e^{-\alpha (t_2 -\tau)}\end{align*}
for some constant $\alpha > 0$. Plugging these two bounds into \eqref{eq:union2} and taking $\alpha_2 = \min\left\{\frac{q^2}{8{\binom{N}{2}}^2},\alpha\right\}$, we obtain the lemma.
\end{proof}

\medskip

\subsection{Lower Bounds}

\subsubsection{Proof of \texorpdfstring{\cref{thm:ball_lb}}{Lg}} \label{app:ball_lb}

\begin{proof}{Proof.}
We construct a ``best-case'' policy $\pi$ as follows. Consider the no-flex policy which never exercises the flex option, and, for any history $\sigma(T) \in \Sigma(T)$ induced by this policy, let $i = \argmax_{i'}x_{i'}^{nf}(T)$, breaking ties lexicographically. Suppose now that $\pi$ has hindsight information on the realization of the $T$ random trials, and, for any $t$ such that $\preferred{t} = i$, $\pi$ has the freedom to move the ball from $i$ to any other bin $j \neq i$, in such a way that always decreases $\Gapnf(T)$ by 1.

We lower bound the expected gap of $\pi$ as follows:
\begin{align*}
 \mathbb{E}\left[\Gapp(T)\right] &\geq \mathbb{E}\left[\Gapp(T)\mid \Gapnf(T) \geq 2 \sqrt{T}\right] \cdot \mathbb{P}\left(\Gapnf(T) \geq 2 \sqrt{T}\right) \\
 &\geq \mathbb{E}\left[\Gapp(T)\mid \Gapnf(T) \geq 2 \sqrt{T}\right] \cdot a',
\end{align*}
for large enough $T$, where the second inequality follows from \Cref{cl:ball_lb}. Suppose $M^\pi \leq \sqrt{T}$. Then, $\pi$ decreases the gap under the no-flex policy by at most $\sqrt{T}$, yielding:
\begin{align*}
    \mathbb{E}\left[\Gapp(T)\right] 
    &\geq \mathbb{E}\left[\Gapnf(T)-\sqrt{T}\mid \Gapnf(T) \geq 2 \sqrt{T}\right] \cdot a'\geq \sqrt{T}a',
\end{align*}
for large enough $T$. 

Therefore, in sample paths for which $\Gapnf(T) \geq 2\sqrt{T}$, we must have $M^\pi > \sqrt{T}$ in order for $\mathbb{E}[\Gapp(T)] \in \mathcal{O}(1)$.  Thus, we necessarily have: $$\mathbb{E}[\mofp] > \mathbb{E}\left[\mofp\mid \Gapnf(T) \geq 2 \sqrt{T}\right] \cdot \mathbb{P}\left(\Gapnf(T) \geq 2 \sqrt{T}\right) \geq \sqrt{T} \cdot a',$$ for large enough $T$.
\end{proof}

\medskip

\subsubsection{Lower bound for deterministic policies} \label{app:static_tight}

\begin{proposition}\label{prop:static_tight}
For any policy $\pi$, at most one of the following two facts holds:
\begin{enumerate}
    \item $M^\pi \leq a\sqrt{T}$ almost surely, for some $a > 0$, or
    \item $\mathbb{E}\left[\Gap^\pi(T)\right] \in o\left(\sqrt{T}\right).$
\end{enumerate}
\end{proposition}

\begin{proof}{Proof.}
The proof of the result is similar to that of \Cref{thm:ball_lb}. In particular, we prove the claim by contradiction. Suppose there exists a policy $\pi$ such that $M^\pi \leq a \sqrt{T}$ for some constant $a > 0$ and \mbox{$\mathbb{E}[\Gap^\pi(T)] \in o(\sqrt{T}).$}

When no flexing is involved, by \Cref{cl:ball_lb}, $\mathbb{P}\left(\Gapnf(T) \geq (a+1) \sqrt{T}\right) \geq a',$ for some constant $a' > 0$ and large enough $T.$ Now, fix a particular history $\history{T} \in \historyset{T}$ such that $\Gapnf(T) \geq (a+1) \sqrt{T},$ %For each $\history{T},$ 
and consider \mbox{$i \in \argmax_{i'}x_{i'}(T)$}. By definition, $\Gapnf(T) \geq (a+1) \sqrt{T}$ implies \mbox{$\sum_{t = 1}^T \mathbbm{1}\{\preferred{t} = i\} \geq T/N+ (a+1)\sqrt{T}.$} %We now couple~$\pi$ with each $\history{T}$ by assuming that
In the best case, $\pi$ knows the exact realizations of the $T$ random balls, i.e., knows $\history{T}$, and%. Moreover, we assume that $\pi$ 
{, for all $t$ such that $\preferred{t} = i$ and $\flexaction(t)=1$, can place the ball that would have gone into bin $i$ into any bin $j$ that would decrease $\Gapp(T)$ by 1}. Then, if $M^\pi \leq a \sqrt{T}$, $\pi$ can relocate at most $a \sqrt{T}$ balls from bin $i$ to the other bins and thus  
\begin{align*}
    \mathbb{E}\left[\Gap^{\pi}(T)\right] &\geq \mathbb{E}\left[\Gap^{\pi}(T)\mid \Gapnf(T) \geq (a+1) \sqrt{T}\right] \cdot \mathbb{P}\left(\Gapnf(T) \geq (a+1) \sqrt{T}\right)\\
    &\geq \mathbb{E}\left[\Gapnf(T)-a \sqrt{T}\mid \Gapnf(T) \geq (a+1) \sqrt{T}\right] \cdot a'\\
    &\geq \sqrt{T} \cdot a'
\end{align*}
for large enough $T$, which contradicts the fact that $\mathbb{E}[\Gapp(T)] \in o\left(\sqrt{T}\right)$.
%\hfill\qed
\end{proof}

\subsection{Analysis of the Static Policy}\label{app:ball_proof}

\begin{comment}
{
\subsubsection{Algorithm}\label{app:static_algo}
Before providing a formal description of the algorithm, we {recall} some notation. For $t\in [T]$, we let $\xstatic_i(t)$ denote the number of balls in bin $i \in [N]$ under the static policy, and use $\binstatic{t}$ to denote the bin in which the ball lands at time $t$. Finally, {we} let $\Gaps(T)$ denote the gap of the system at time $T$ under the static policy. %Moreover, $\mathcal{U}\{a,b\}$ is used to denote the discrete uniform distribution supported over the set of integers in $[a,b]$.} 
A formal description of the static policy is then provided in \cref{alg:ball_static} (with ties assumed to be broken lexicographically {within the $\argmin$}.)

\begin{algorithm}[ht]
\caption{Static Policy $\pis$}\label{alg:ball_static}
%\SetAlgoLined
%\KwResult{Write here the result }
\begin{algorithmic}[1]
\Require $\xstatic_i(0)=0\;\forall \, i \in [N]$, $\staticcst = \frac{2\sqrt{6}N(N-1)}{q}$, $\flexaction(t) =\begin{cases} 0 \quad t \leq T - \staticcst\cdot \sqrt{T \log T} \\ 1 \quad \text{otherwise}\end{cases}$\;
 \For{$t \in [T]$}
 \If{$\flexaction(t)\flextype{t} = 1$}
 \State $\binstatic{t} = \argmin\limits_{i\in\flexset{t}} \xstatic_i(t-1)$
 \Else
   \State {$\binstatic{t} = \preferred{t}$}
  %\chdelete{\State The $t^{th}$ ball goes into some bin $i \in \{1,2,...,N\}$}
 \EndIf
  \For{$i \in [N]$}
  \State $\xstatic_i(t)= \begin{cases}
    \xstatic_i(t-1)+1 &\quad \text{if } i = \binstatic{t} \\
    \xstatic_i(t-1) &\quad \text{otherwise}.
  \end{cases}$
  \EndFor
  \EndFor
\end{algorithmic}
\end{algorithm}
}

\medskip
\end{comment}

\subsubsection{Proof of \texorpdfstring{\cref{thm:ball_static}}{Lg}} \label{app:ball_proof_static}

We leverage the auxiliary lemmas in Appendix \ref{sec:auxi} to prove \cref{thm:ball_static} next.

\begin{proof}{Proof of \cref{thm:ball_static}.}
Let $\That = T - \staticcst \sqrt{T \log T}$. By construction, the static policy starts flexing at $\That+1$. Let $E$ denote the event that the gap is non-zero at $T$. %That is,
% $$E := \left\{\history{T} \in \historyset{T}\mid \Gaps(T) \neq 0 \right\}$$
Then,
\begin{align}\label{eq:main}
    \mathbb{E}[\Gaps(T)] = \mathbb{E}[\Gaps(T)\mid E] \mathbb{P}(E) + \mathbb{E}[\Gaps(T)\mid E^c] \mathbb{P}(E^c)
= \mathbb{E}[\Gaps(T)\mid E] \mathbb{P}(E),
\end{align}
where the second equality follows from the fact that $\Gaps(T) = 0$ given $E^c$, by definition.

Let $E^1$ denote the event that the maximally and minimally loaded bins at time $T$ never had the same loads between $\That$ and $T$, and $E^2$ the event that they do. Letting $\tau$ denote the last period in which the loads of the maximally and minimally loaded bins at time $T$ were equal, we have:
\begin{align}\label{eq:main1}
    \mathbb{E}[\Gaps(T)] &= \EE{\Gaps(T) \mid E^1} \PP{E^1} + \EE{\Gaps(T) \mid E^2}\PP{E^2} \notag \\
   & \leq T\PP{E^1} + \EE{T-\tau \mid E^2}\PP{E^2},
\end{align}
where the inequality uses the fact that $\Gaps(T) \leq T$ trivially. Moreover, letting $i$ and $j$ respectively denote the maximally and minimally loaded bins at time $T$, the bound on $\Gaps(T)$ under $E^2$ follows from:
\begin{align*}
\Gaps(T) = x_i(T)-T/N \leq x_i(T)-x_j(T) &= x_i(T)-x_i(\tau)+x_j(\tau)-x_j(T)+x_i(\tau)-x_j(\tau)\\
&\leq T-\tau,
\end{align*}
since in the worst case the maximally loaded bin at time $T$ received all balls between $\tau+1$ and~$T$.

We further partition event $E$ as follows. {For $i, j \in [N]$, we use $E_{ij}^1$ to denote the event that (i) $i$ and $j$ are respectively the maximally and minimally loaded bins at time $T$, and (ii) the loads of $i$ and $j$ are never the same between $\That$ and $T$. We moreover use $E_{ij}^2$ to denote the event that (i) $i$ and $j$ are respectively the maximally and minimally loaded bins at time $T$ and do not have the same loads, but (ii) their loads were the same at some point between $\That$ and $T-1$. Formally:}
\begin{align}
E_{ij}^1 := \{& \history{T} \in \historyset{T} \mid i = \arg\max_{k\in[N]} \xstatic_k(T), j = \arg\min_{k\in[N]} \xstatic_k(T), \xstatic_i(t) \neq \xstatic_j(t), \forall t \in \{\That, \ldots , T\}\} \label{eq:eij1} \\
E_{ij}^2 := \{& \history{T} \in \historyset{T} \mid i = \arg\max_{k\in[N]} \xstatic_k(T), j = \arg\min_{k\in[N]} \xstatic_k(T), \xstatic_i(T) \neq \xstatic_j(T), \notag\\
&\hspace{2.5cm}\xstatic_i(t) = \xstatic_j(t) \text{ for some } t \in \{\That, \ldots , T-1\}\} \label{eq:eij2}
\end{align}

Union bounding \eqref{eq:main1} over $i$ and $j$, we have: {
\begin{align}
    \EE{\Gaps(T)} &\leq T \sum_{i\neq j} \PP{E_{ij}^1} + \sum_{i\neq j} \EE{T-\tau \mid E_{ij}^2} \PP{E_{ij}^2} \notag\\
    &\leq T \sum_{i\neq j} \PP{E_{ij}^1} + \sum_{t = 1}^{\lceil T-\That \rceil} \sum_{i\neq j} \EE{T-\tau \mid E_{ij}^2, T - \tau = t} \PP{E_{ij}^2, T - \tau = t} \notag\\
    &= T \sum_{i\neq j} \PP{E_{ij}^1} + \sum_{t = 1}^{\lceil T-\That \rceil} \sum_{i\neq j} t \ \PP{E_{ij}^2, T-\tau = t} \notag\\
    &\leq T\sum_{i\neq j} \PP{E_{ij}^1} + \sum_{t = 1}^{\lceil T-\That \rceil} \sum_{i\neq j} t\ \PP{E_{ij}^2 \mid T-\tau = t}.\label{eq:main_simple}
\end{align}}
{Applying \cref{lem:e1ij} to $E_{ij}^1$, with $t_1 = \That$ and $t_2 = T$, there exists a constant $\alpha > 0$ such that:
\begin{align}\label{eq:e1ij_main}
\mathbb{P}(E_{ij}^1) &\leq 4 \That^{-1} + e^{-\alpha(T-\That)}.
\end{align}}
Moreover, applying \cref{lem:e2ij} to $E_{ij}^2$, with $t_1 = \That$ and $t_2 = T,$ there exists a constant $\alpha_2 > 0$ such that: 
\begin{align}\label{eq:e2ij_main}
\mathbb{P}(E_{ij}^2\mid T-\tau = t) = \mathbb{P}(E_{ij}^2\mid \That, T, T-\tau = t) \leq 5 e^{-\alpha_2 t} \ \forall \ i,j.
\end{align}

{Summing \eqref{eq:e1ij_main} and \eqref{eq:e2ij_main} over all $i,j \in [N]$ and plugging this back into \eqref{eq:main_simple}, we then obtain that, for large enough $T$,
\begin{align}\label{eq:thm1-last-bound}
    \EE{\Gaps(T)} \leq & \sum_{i\neq j} T \cdot \parenthesis{4 \That^{-1} + e^{-\alpha(T-\That)}} + \sum_{i\neq j} \sum_{t = 1}^{\lceil T-\That \rceil} 5te^{-\alpha_2 t} \notag \\
    &\leq N^2 \left(\frac{4T}{T-a_s\sqrt{T\log T}} + Te^{-\alpha\cdot a_s\sqrt{T\log T}} + \sum_{t = 1}^{\lceil T-\That \rceil} 5te^{-\alpha_2 t}\right).
\end{align}}
Note that, for all $t \geq 1$, $t \leq \frac{2e^{-1}}{\alpha_2}e^{\alpha_2t/2}$. Therefore:
\begin{align}\label{eq:simplify_lemma5}
\sum_{t = 1}^{\lceil T-\That \rceil} 5te^{-\alpha_2t} &\leq \sum_{t=1}^T\frac{10e^{-1}}{\alpha_2}e^{-\alpha_2t/2} \leq \alpha_4,
\end{align}
for some positive constant $\alpha_4 > 0$, since $e^{-ax}$ is summable for all $a > 0$. Noting moreover that {$\frac{T}{T-a_s\sqrt{T\log T}}$ and} $Te^{-\alpha\cdot a_s\sqrt{T\log T}}$ are upper bounded by constants, and plugging these two facts back into \eqref{eq:thm1-last-bound}, we obtain the result.
\end{proof}

\medskip

\subsection{Analysis of the Semi-Dynamic Policy}

\subsubsection{Proof of \texorpdfstring{\cref{thm:ball_semi2}}{Lg}} \label{app:ball_proof_semi_2}

\begin{proof}{Proof.}
Fix a constant $a \in (0,1]$, and define $\Ta := \inf\left\{t:\Gapnf(t) \geq \frac{a (T-t)q}{N}\right\}$. We will show that, for any such $a$, $\mathbb{E}[T-\Ta]\in \mathcal{O}\left(\sqrt{T}\right)$. This will then immediately imply $\mathbb{E}[M^d] \in \mathcal{O}(\sqrt{T})$, since \mbox{$\Gap^d(t) = \Gap^{nf}(t)$} for all $t \leq \Tstar$. 

{We have:
\begin{align}\label{eq:conditioning}
\mathbb{E}[T-\Ta]
&\leq \sum_{k = 1}^{\left\lceil\sqrt{T}\right\rceil} \mathbb{E}\left[T-\Ta\mid T-\Ta \in [(k-1) \sqrt{T},k \sqrt{T})\right] \notag \cdot \mathbb{P}\left(T-\Ta \in [(k-1) \sqrt{T},k \sqrt{T})\right) \notag \\
&\leq \sum_{k = 1}^{\left\lceil\sqrt{T}\right\rceil} k \sqrt{T} \cdot \mathbb{P}\left(T-\Ta \in [(k-1) \sqrt{T},k \sqrt{T})\right) \notag \\
&=\sqrt{T} \underbrace{\sum_{k=1}^{\lceil \sqrt{T} \rceil} \mathbb{P}\left(T-\Ta \geq (k-1)\sqrt{T}\right)}_{(I)}.
\end{align}

Therefore, it suffices to show that $(I)$ is upper bounded by a constant. The following lemma uses the Berry-Esseen theorem to bound the likelihood that $T-T_a \geq (k-1)\sqrt{T}$, for all $k \in \{1,\ldots,\lceil{\sqrt{T}\rceil}\}$. We defer its proof to Appendix \ref{apx:eta-k}.
\begin{lemma}\label{lem:eta-k}
For all $k \in \{1,\ldots,\lceil{\sqrt{T}}\rceil\}$, there exists a constant $b_6 > 0$ such that:
\begin{align}\label{eq:eta-k}
\mathbb{P}\left(T-T_a\geq (k-1)\sqrt{T}\right) \leq N\left(1-\Phi\left(\frac{a(k-1)q}{\sqrt{N-1}}\right)+ \frac{b_6}{\sqrt{T}}\right),
\end{align}
\end{lemma}
Summing \eqref{eq:eta-k} over all $k$, we obtain: 
% \begin{equation}
% \begin{split}
\begin{align*}
(I) = \sum_{k = 1}^{\left\lceil\sqrt{T}\right\rceil} \mathbb{P}\left(T-\Ta \geq (k-1) \sqrt{T}\right)
&\leq  \frac{Nb_6}{\sqrt{T}}\left(\sqrt{T}+1\right) +  N\underbrace{ \sum_{k = 1}^{\left\lceil\sqrt{T}\right\rceil} \left(1-\Phi\left(\frac{a(k-1)q}{\sqrt{N-1}}\right)\right)}_{(II)}
\end{align*}
where it remains to be shown that $(II)$ can also be upper bounded by a constant. To see this, note that:
\begin{align*}
(II) \leq \sum_{k = 0}^{\infty} \left(1-\Phi\left(\frac{aq}{\sqrt{N-1}} \cdot k \right)\right)
&\leq \frac{\sqrt{N-1}}{aq} \int_{x = -aq/\sqrt{N-1}}^{\infty} \left(1-\Phi\left(x\right) \right)dx\\
&\leq \frac{\sqrt{N-1}}{aq} \int_{x = -1}^{\infty} \left(1-\Phi\left(x\right) \right)dx,
\end{align*}
since $aq/\sqrt{N-1}<1$.

Using the fact that $\int_{x = -1}^{\infty} \left(1-\Phi\left(x\right) \right)dx \leq 1 + \int_{x = 0}^{\infty} \left(1-\Phi\left(x\right) \right)dx = 1+ \sqrt{\frac{1}{2 \pi}}$, we obtain \mbox{$(II) \leq \frac{\sqrt{N-1}}{aq} \parenthesis{1+ \sqrt{\frac{1}{2 \pi}}}$}. %This completes the proof.
Thus, 
\begin{align}\label{eq:tight_analysis}
    \mathbb{E}[T-\Ta] &\leq\parenthesis{N b_6\left(1+\frac{1}{\sqrt{T}}\right) + N \frac{\sqrt{N-1}}{aq} \parenthesis{1+ \sqrt{\frac{1}{2 \pi}}}} \cdot \sqrt{T} \in \mathcal{O}\parenthesis{\sqrt{T}}. \notag 
\end{align}
}
\end{proof}

\medskip

\subsubsection{Proof of \texorpdfstring{\cref{thm:ball_semi}}{Lg}} \label{app:ball_proof_semi_1}

\begin{proof}{Proof.}
Let \mbox{$E = \{ \history{T} \in \historyset{T}\mid \Gapd(T) \neq 0\}$} be the event that the gap is strictly positive at the end of the horizon. As in the proof of \cref{thm:ball_static}, it suffices to show $\mathbb{E}[\Gapd(T)\mid E] \mathbb{P}(E) \in \mathcal{O}(1)$. To do so, we analyze events $E_{ij}^1$ and $E_{ij}^2$, for all $i, j \in [N]$. As before, $E_{ij}^1$ corresponds to the event that $i$ and $j$ are respectively the maximally and minimally loaded bins in period $T$, and that their loads never intersected between $\Tstar$ and $T$. $E_{ij}^2$ corresponds to the event that this same $i$ and $j$ intersected for some $\tau \geq \Tstar$, but had different loads in period $T$. Formally:  
\begin{align}
E_{ij}^1 := \{&\history{T} \in \historyset{T}\mid i = \arg\max_{k\in[N]} \xsemi_k(T), j = \arg\min_{k\in[N]} \xsemi_k(T), \xsemi_i(t) \neq \xsemi_j(t), \forall \ t \in \{\Tstar, \ldots , T\}\} \\ 
E_{ij}^2 := \{&\history{T} \in \historyset{T}\mid i = \arg\max_{k\in[N]} \xsemi_k(T), j = \arg\min_{k\in[N]} \xsemi_k(T), \notag\\
&\hspace{2.5cm}\xsemi_i(t) = \xsemi_j(t) \text{ for some } t \in \{\Tstar, \ldots , T-1\},\xsemi_i(T) \neq \xsemi_j(T)\} \label{eq:e2ij_def}
\end{align}
Via the same arguments as those used in the proof of \Cref{thm:ball_static}, we have: 
\begin{align}\label{eq:thm2_main}
    \mathbb{E}[\Gapd(T)\mid E] \mathbb{P}(E)
&\leq \sum_{i \neq j} \parenthesis{\mathbb{E}[\Gapd(T)\mid E_{ij}^1] \mathbb{P}(E_{ij}^1) + \mathbb{E}[\Gapd(T)\mid E_{ij}^2] \mathbb{P}(E_{ij}^2)},
\end{align}

We next analyze \mbox{$\mathbb{E}[\Gapd(T)\mid E_{ij}^2, \Tstar] \mathbb{P}(E_{ij}^2 \mid \Tstar)$} and \mbox{$\mathbb{E}[\Gapd(T)\mid E_{ij}^1, \Tstar] \mathbb{P}(E_{ij}^1 \mid \Tstar)$} for a fixed first flexing time $\Tstar$. Under event $E_{ij}^2$, $\tau := \max\{t \in \{\Tstar,\ldots, T-1\} \mid x_i^d(t) = x_j^d(t)\}$ be the last time that $\xsemi_i(t) = \xsemi_j(t)$. By \Cref{lem:e2ij}, we have:
\begin{align}\label{eq:use-this-next}
\mathbb{P}(E_{ij}^2\mid \Tstar, T - \tau = t) = \mathbb{P}(E_{ij}^2\mid \Tstar, T, T - \tau = t) \leq 5 e^{-\alpha_2 t},
\end{align}
for some constant $\alpha_2 > 0$. Moreover, conditioned on $T-\tau = t$ and $E_{ij}^2$, we have:
\begin{align*}
\Gapd(T) = x_i(T)-\frac{T}{N} \leq x_i(T)-x_j(T) = \left(x_i(T)-x_i(\tau)\right)-\left(x_j(T)-x_j(\tau)\right) \leq T-\tau,
\end{align*}
where the second equality uses the fact that $x_i(\tau) = x_j(\tau)$ under $E_{ij}^2$, and the inequality follows from the fact that, in the worst case, all balls go into $i$ between $\tau+1$ and $T$.

Putting these two facts together, we obtain: {
\begin{align*}
    \EE{\Gapd(T) \mid E_{ij}^2, \Tstar} \PP{E_{ij}^2 \mid \Tstar}
    &= \sum_{t = 1}^{T-\Tstar} \EE{\Gapd(T) | E_{ij}^2, \Tstar, T-\tau = t} \PP{E_{ij}^2 \mid \Tstar, T - \tau = t}\\
    &\leq \sum_{t = 1}^{T-\Tstar} 5t  e^{-\alpha_2 t}  \leq b_3,
\end{align*}
for some constant $b_3 > 0,$ where the last inequality follows from fact that $xe^{-ax}$ is integrable, for any $a > 0$.} By the tower rule, then, we have that $\mathbb{E}[\Gapd(T)\mid E_{ij}^2]\mathbb{P}(E_{ij}^2) \leq b_3$.

We now bound $\mathbb{E}[\Gapd(T)\mid E_{ij}^1] \mathbb{P}(E_{ij}^1)$. Fix the first flexing time $\Tstar$. Since at most \mbox{$T-\Tstar$} balls could have been added to any bin between $\Tstar+1$ and $T$, $\Gapd(T)\leq \Gapd(\Tstar)+T-\Tstar$. Moreover, by definition of $T^\star$, $\Gapd(T^\star-1) < \frac{\semicst(T-T^\star+1)q}{N}.$ Since \mbox{$\Gapd(T^\star) \leq \Gapd(T^\star-1)  + 1$}, we have the following bound on $\Gapd(T)$:
\begin{align}\label{eq:gap-t-bound}
\Gapd(T) < \frac{a_d(T-\Tstar+1)q}{N}+1+T-\Tstar = (1+T-\Tstar)\left(1+\frac{a_dq}{N}\right).
\end{align}

The following lemma allows us to partition our analysis into two cases, given $\Tstar$: (i) $T-\Tstar < t_0$, and (ii) $T-\Tstar \geq t_0$, for some constant $t_0 > 0$. We defer its proof to Appendix \ref{apx:t0-partition-lem}.

\begin{lemma}\label{lem:t0-partition-lem}
There exist constants $t_0, \alpha_0 > 0$ such that $\mathbb{P}(E_{ij}^1 \mid \Tstar) \leq 5e^{-\alpha_0(T-\Tstar)}$ for all $\Tstar \leq T-t_0$.
\end{lemma}

Fix the constant $t_0 > 0$ from \Cref{lem:t0-partition-lem}, and consider the following two cases.

\textbf{Case 1:} $T-\Tstar < t_0$. In this case, by \eqref{eq:gap-t-bound}, we have:
\begin{align*}
\mathbb{E}\left[\Gapd(T)\mid E_{ij}^1, \Tstar \right] \mathbb{P}\left(E_{ij}^1 \mid \Tstar \right) < (1+t_0)\left(1+\frac{a_dq}{N}\right).
\end{align*}

\textbf{Case 2:} $T-\Tstar \geq t_0$. In this case, putting together \eqref{eq:gap-t-bound} and \Cref{lem:t0-partition-lem}, we have:
\begin{align}\label{eq:to-plug-in}
\mathbb{E}[\Gapd(T) \mid E_{ij}^1, \Tstar] \mathbb{P}(E_{ij}^1 \mid \Tstar)
< (1+T-\Tstar)\left(1+\frac{a_dq}{N}\right)\cdot 5e^{-\alpha_0(T-\Tstar)}.
\end{align}

By the tower rule, we then have:
\begin{align*}
\mathbb{E}\left[\Gapd(T)\mid E_{ij}^1\right]\mathbb{P}(E_{ij}^1) &< t_0\cdot (1+t_0)\left(1+\frac{a_dq}{N}\right) + \sum_{t=t_0}^{T-1} (1+t)\left(1+\frac{a_dq}{N}\right)\cdot 5e^{-\alpha_0t}\leq b_4,
\end{align*}
for some $b_4 > 0$.

We conclude the proof by summing these terms over all $i,j \in [N]$, obtaining a final bound of $\mathbb{E}[\Gapd(T)] \leq N^2\left(b_3 + b_4\right) \in \mathcal{O}(1)$.
\end{proof}

\medskip

\subsubsection{Proof of \Cref{lem:eta-k}}\label{apx:eta-k}

\begin{proof}{Proof.}
{Note that, by construction, the semi-dynamic policy makes the same decisions as the no-flex policy up until $\Ta$. Therefore, 
$\Gapd(\Ta) = \Gapnf(\Ta) \geq \frac{a(T-\Ta)q}{N}.$
Thus, for $k \in \{1,\ldots,\lceil\sqrt{T}\rceil\}$:
\begin{align*}
    T-\Ta \geq (k-1) \sqrt{T} \implies \Gapnf(\Ta) \geq \frac{a(k-1)\sqrt{T} q}{N}.
\end{align*}
}

Hence, we have:
\begin{align}\label{eq:diff-to-gap}
\mathbb{P}\left(T-\Ta \geq (k-1) \sqrt{T}\right)
&\leq \mathbb{P}\left(\Gapnf(\Ta) \geq \frac{a(k-1) q}{N} \sqrt{T}\right)  
\leq \mathbb{P}\left(\Gapnf(\Ta) \geq \frac{a(k-1) q}{N} \sqrt{\Ta}\right),
\end{align}
where the second inequality follows from $\Ta \leq T$. Recall, $\Gapnf(\Ta) = \max_{i}\xnf_i(\Ta)-\Ta/N$. Applying a union bound to \eqref{eq:diff-to-gap}, we obtain:
%\chdelete{Now, with a union bound we obtain}
\begin{align}\label{eq:ta-tail}
    \mathbb{P}\left(T-\Ta \geq (k-1) \sqrt{T}\right) \leq N \cdot \mathbb{P}\left(\xnf_1(\Ta) - \Ta/N \geq \frac{a(k-1) q}{N} \sqrt{\Ta}\right).
\end{align}
We define $X \sim \text{Ber}(1/N) - 1/N$ to be the random variable with $\EE{X} = 0,$ $\sigma^2 := \text{Var}(X) = \frac{1}{N}\parenthesis{1-\frac{1}{N}}$ and {$\rho := \EE{|X|^3} = \parenthesis{1 - \frac{1}{N}} \parenthesis{\frac{1}{N}}^3 + \frac{1}{N} \parenthesis{1 - \frac{1}{N}}^3 = \frac{1}{N} - \frac{3}{N^2} + \frac{4}{N^3} - \frac{2}{N^4}$.}

Applying the Berry{-}Esseen Theorem (Theorem 3.4.17 in \citet{durrett2019probability}) we have:
\begin{align}\label{eq:berry-esseen}
\mathbb{P}\left(\xnf_1(\Ta) - \Ta/N \geq \frac{a(k-1) q}{N} \sqrt{\Ta} \ \mid \ \Ta \right)&= \mathbb{P}\left(\frac{\xnf_1(\Ta) - \Ta/N}{\sqrt{\Ta\cdot\frac1N(1-\frac1N)}} \geq \frac{a(k-1) q}{\sqrt{N-1}} \ \mid \ \Ta \right) \notag \\
&\leq  1-\Phi\left(\frac{a(k-1) q}{\sqrt{N-1}}\right)+\frac{b}{\sqrt{\Ta}},
\end{align}
where $b = \frac{3 \rho}{\sigma^3} = \frac{3\sqrt{N} \cdot \parenthesis{1 - \frac{3}{N} + \frac{4}{N^2} - \frac{2}{N^3}}}{\parenthesis{1-\frac{1}{N}}^{1.5}}$. 

For ease of notation, we define $\eta_k = 1-\Phi\left(\frac{a(k-1) q}{\sqrt{N-1}}\right)$. {Noting that $\Ta \geq \Gapnf(\Ta) \geq \frac{a(T-\Ta)q}{N}$, we have that $\Ta \geq \frac{a q}{N+aq}T$. Plugging this back into~\eqref{eq:berry-esseen}, and defining $b_6 = b \sqrt{\frac{a q + N}{a q}}$, we have:
\begin{align*}
&\mathbb{P}\left(\xnf_1(\Ta) - \Ta/N \geq \frac{a(k-1) q}{N} \sqrt{\Ta} {\mid \Ta}\right) \leq \eta_k + \frac{b_6}{\sqrt{T}}.
\end{align*}
}
Using the law of total probability over $\Ta$ and applying this to \eqref{eq:ta-tail}, we obtain the result.
\end{proof}

\medskip

\subsubsection{Proof of \Cref{lem:t0-partition-lem}}\label{apx:t0-partition-lem}
\begin{proof}{Proof.}
We argue that $x_i^d(\Tstar)-x_j^d(\Tstar) \leq a_d(T-\Tstar)q+a$, for some constant $a > 0$. This will then allow us to apply \Cref{lem:e1ij_semi} to $E_{ij}^1$, with $t_1 = \Tstar, t_2 = T$, and $a_p = a_d \leq \frac{1}{5{N\choose 2}}$. This establishes that there exist constants  $\alpha_0 > 0$, $t_0 > 0$ such that: {
\begin{align}\label{eq:to-plug-in-2}
\PP{E_{ij}^1 \mid \Tstar} &=
\PP{E_{ij}^1 \mid \xsemi_{i}(T^\star) - \xsemi_{j}(T^\star) \leq \semicst(T-T^\star)q + \semicst q+N, \Tstar, T, T - \Tstar \geq t_0} \leq 5 e^{-\alpha_0(T-T^\star)}.
\end{align}}

To prove the linear bound on $x_i^d(\Tstar)-x_j^d(\Tstar)$, note that: 
{
\begin{align}
    \xsemi_{i}(T^\star) - \xsemi_{j}(T^\star) &\leq\max_{k} \xsemi_{k}(T^\star) - \min_{k} \xsemi_{k}(T^\star) \\
    &= N \cdot \max_{k} \xsemi_{k}(T^\star) - \parenthesis{(N-1) \cdot \max_{k} \xsemi_{k}(T^\star) + \min_{k} \xsemi_{k}(T^\star)} \notag\\
    &\leq N\cdot \parenthesis{\max_{k} \xsemi_{k}(T^\star) - T^\star/N} = N \Gapd(\Tstar),\label{eq:min_max}
\end{align}
where the second inequality comes from $(N-1) \cdot \max_{k} \xsemi_{k}(T^\star) + \min_{k} \xsemi_{k}(T^\star) \geq T^\star.$ }

Recall, $\Gapd(\Tstar) < \frac{a_d(T-\Tstar+1)q}{N}+1$. Plugging this into the above, we obtain:
\begin{align*}
x_i^d(\Tstar)-x_j^d(\Tstar) < N\left(\frac{a_d(T-\Tstar+1)q}{N}+1\right) = a_dq(T-\Tstar) + \left(a_dq+N\right),
\end{align*}
which completes the proof of the claim.
\end{proof}

{
\medskip

\section{Application: Opaque Selling} \label{app:opaque}

\subsection{Analysis of the Salop Circle Model}\label{app:salop_analysis}

\subsubsection{Proof of \texorpdfstring{\cref{lem:purchase_probabilities}}{Lg}} \label{app:purchase_probabilities}

\begin{proof}{Proof.}
The platform's expected revenue in period $t$ is given by $\perperiodrev^\pi(t) = \left(\sum_{i=1}^{N}\hat{p}\cdot q_i^\pi(t)\right) + p_oq_o^\pi(t).$

Consider any period in which the opaque product is not offered. We abuse notation and let $\mathcal{R}(p)$ be the expected revenue obtained in this period, as a function of the price $p$. By definition, for any price ${p}$, $\mathcal{R}(p)$ is given by: 
\begin{align}\label{eq:rev-no-opaque}
\rev(p) = p\sum_{i=1}^{N}q_i^{\pi}(t) = p\cdot \mathbb{P}\left(\max_{i}\left\{\bar{v}-\gamma d(X,x_i)-p\right\} \geq 0\right) &= p\cdot \mathbb{P}\left(\min_id(X,x_i) \leq \frac{\bar{v}-p}{\gamma}\right) \notag \\
&= p\cdot 2N \cdot \mathbb{P}\left(X\leq \frac{\bar{v}-p}{\gamma}, X \in \left[0,\frac{1}{2N}\right]\right),
\end{align}
where the final equality follows from the symmetry of the Salop circle model (see \Cref{fig:salop}).

We partition our analysis into three cases.

\textbf{Case 1: $p \geq \bar{v}$.} By \eqref{eq:rev-no-opaque}, $\rev(p) = 0$. 

\smallskip

\textbf{Case 2: $p \leq \bar{v}-\tfrac{\gamma}{2N}$.} In this case, $\tfrac{\bar{v}-p}{\gamma} \geq \tfrac{1}{2N}$. Using this fact in \eqref{eq:rev-no-opaque}, we obtain $\rev(p) = p \cdot 2N$, whose maximum is then attained at $\bar{v}-\tfrac{\gamma}{2N}$.

\smallskip

\textbf{Case 3: $p \in \left(\bar{v}-\tfrac{\gamma}{2N},\bar{v}\right)$.} In this case, by \eqref{eq:rev-no-opaque}, $
\rev(p) = p\cdot 2N\cdot \frac{\bar{v}-p}{\gamma}.$
Differentiating the above and setting the derivative equal to 0, we obtain that the supremum is attained at $\max\left\{\tfrac{\bar{v}}{2},\bar{v}-\tfrac{\gamma}{2N}\right\} = \bar{v}-\tfrac{\gamma}{2N}$, since $\gamma \in [0, \overline{v} \cdot N]$ by assumption.

\smallskip

Combining the three cases, we obtain that $\hat{p} = \bar{v}-\tfrac{\gamma}{2N}$ is the revenue-maximizing price when the opaque product is not offered, with $\mathcal{R}(\hat{p}) = \bar{v}-\frac{\gamma}{2N}$. Under this price, a customer's utility for each product $i \in [N]$ is given by:
\begin{align}\label{eq:util-under-opt}
V_i - \hat{p}= \bar{v}-\gamma d(X,x_i)- \left(\bar{v}-\tfrac{\gamma}{2N}\right) = \gamma\cdot\left(\frac{1}{2N}-d(X,x_i)\right),
\end{align}
implying that each customer chooses product $i^* \in \arg\min_i d(X,x_i)$. Since $X$ is uniformly distributed over $[0,1]$, this implies that $q_i^\pi(t)  = \frac1N$ for all $i$ under this pricing scheme. 

\medskip 

Consider now any period in which the opaque product is sold, with $\hat{p} = \bar{v}-\tfrac{\gamma}{2N}$ and $p_o = \hat{p}-\delta$. \Cref{lem:vo_computation} derives a closed-form expression for $V_o$. We defer its proof to Appendix \ref{app:vo_computation}. 
\begin{proposition}\label{lem:vo_computation}
    $V_o = \overline{v} - \gamma/4$.
\end{proposition}

By \Cref{lem:vo_computation}, the customer's utility for the opaque product is:
\begin{align}\label{eq:util-under-opaque}
V_o - p_o = \overline{v} - \gamma/4 - (\hat{p} - \delta) = \frac{\gamma}{2N} - \frac\gamma4 +\delta,
\end{align}
where the final equality follows from the definition of $\hat{p} = \bar{v}-\tfrac{\gamma}{2N}$.

Since the utility obtained from the best non-opaque product is non-negative under $\hat{p}$ by construction, the customer purchases the opaque option if and only if:
\begin{align*}
V_o - p_o \geq \max_i \left\{V_i - \hat{p}\right\} = \frac{\gamma}{2N}-\gamma\cdot\min_{i}d(X,x_i),
\end{align*}
by \eqref{eq:util-under-opt}. Putting this together with \eqref{eq:util-under-opaque}, we have that the opaque option is chosen if and only if:
\begin{align*}
-\frac\gamma4 + \delta \geq -\gamma\cdot\min_i d(X,x_i) \iff \min_i d(X,x_i) \geq \frac14-\frac{\delta}{\gamma}.
\end{align*}

Therefore, by symmetry (see \Cref{fig:salop}), we obtain:
\begin{align}\label{eq:qo}
q_o^\pi(t) = 2N\cdot\mathbb{P}\left(X \geq \frac14-\frac\delta\gamma,\ X \in \left[0,\frac{1}{2N}\right]\right).
\end{align}

We again partition our analysis into three cases.

\smallskip

\textbf{Case 1: $\delta \geq \tfrac{\gamma}{4}$.} By \eqref{eq:qo}, $q_o^\pi(t) = 1$.

\smallskip 

\textbf{Case 2: $\delta \leq \gamma\left(\tfrac14-\tfrac{1}{2N}\right)$.} In this case, $\frac14-\frac\delta\gamma \geq \frac{1}{2N}$, therefore $q_o^\pi(t) = 0$.

\smallskip

\textbf{Case 3: $\delta \in \left(\gamma\left(\tfrac14-\tfrac{1}{2N}\right), \tfrac{\gamma}{4}\right)$.} In this case, $q_o^\pi(t) = 2N \cdot \left(\frac{1}{2N}-\left(\frac14-\frac\delta\gamma\right)\right) = 1-\frac{N}{2}+\frac{2N\delta}{\gamma}$.

In all three cases, $\sum_i q_i^\pi(t) = 1-q_o^\pi(t) \implies q_i^\pi(t) = \frac{1-q_o^\pi(t)}{N}$, again by symmetry. The expected revenue is thus $\hat{p} - \delta \cdot q^\pi_o(t) = \overline{v} - \gamma / (2N) - \delta \cdot q^\pi_o(t)$.
\end{proof}

\medskip

\subsubsection{Proof of \texorpdfstring{\cref{lem:vo_computation}}{Lg}} \label{app:vo_computation}

\begin{proof}{Proof.}
By definition, $V_o$ is given by:
\begin{align}\label{eq:vo}
V_o = \frac{\sum_{i=1}^{N}V_i}{N} = \bar{v}-\gamma\cdot\frac1N\sum_{i=1}^{N}d(X,x_i).
\end{align}

By symmetry, it is without loss of generality to assume $X = 0$. Fix $i \in [N]$. Since $x_i = i/N$, the mismatch $d(0,x_i)$ satisfies
\begin{align}\label{eq:d0}
d(0, x_i) \;=\; \min\Bigl(\bigl|0 - x_i\bigr|,\; 1 - \bigl|0 - x_i\bigr|\Bigr)
\;=\;
\min\Bigl(\frac{i}{N},\, 1 - \frac{i}{N}\Bigr).
\end{align}

Let $N = 2m$ for some $m \in \mathbb{Z}^+$. Then, \eqref{eq:d0} reduces to:
\begin{align*}
d(0, x_i) &\;=\;
\min\Bigl(\frac{i}{2m},\, \frac{2m-i}{2m}\Bigr)\\
\implies \sum_{i=0}^{2m-1} d(0,x_i) &= \sum_{i=0}^{m-1}\frac{i}{2m} + \sum_{i=m}^{2m-1}\frac{2m-i}{2m}\\
&=\frac{1}{2m}\left(\frac12m(m-1)+\frac12m(m+1)\right)\\
&= \frac{m}{2} \\
\implies V_o &= \bar{v}-\gamma/4,
\end{align*}
where the final implication follows from \eqref{eq:vo}. \end{proof}

\medskip

\subsubsection{Proof of \texorpdfstring{\cref{lem:long_run_revenue}}{Lg}} \label{app:long_run_revenue}

\begin{proof}{Proof.}
Denote by $\R^\pi_l$ the length of the $l$-th replenishment cycle and by $M^\pi_l$ the number of times the opaque product is purchased during the $l$-th replenishment cycle. Then, the long-run average revenue is given by:
$${Rev}^\pi = \lim_{L \to \infty} \frac{\sum_{l = 1}^L \parenthesis{\R^\pi_l \cdot \hat{p}  - M^\pi_l \cdot \delta}}{\sum_{l = 1}^L \R^\pi_l} = \hat{p} - \delta \cdot \frac{\lim_{L \to \infty} \frac{\sum_{l = 1}^L M^\pi_l}{L}}{\lim_{L \to \infty} \frac{\sum_{l = 1}^L \R^\pi_l}{L}} =\hat{p} - \delta \frac{\EE{M^\pi}}{\mathbb{E}[\R^\pi]},$$
where the second equality follows from the existence of finite limits and the last equality follows from the Renewal Reward Theorem~\citep{grimmett2020probability}.
\end{proof}

\medskip

\subsection{Analysis of the Benchmark Policies}\label{app:benchmark}

\subsubsection{Proof of \texorpdfstring{\cref{prop:nf_objective}}{Lg}} \label{app:nf_objective}

\begin{proof}{Proof.}
    Since the no-flex policy never offers the opaque product, we trivially have $\mathbb{E}[M^{nf}] = 0$. The bound on the expected length of the replenishment cycle is a direct corollary of Lemma 3 in \citet{elmachtoub2019value}. In particular, plugging in $\alpha = \frac{4}{3} \cdot \frac{\log^{(2)} N}{2 \log N}  < 1$ into Eq. (8) in \citet{elmachtoub2019value}, we have: 
    \begin{align}\label{eq:8elmachtoub}
    \mathbb{E}[\Rn] \leq NS - (NS - m_\alpha)(1-o(1)),
    \end{align}
    where {
    \begin{align*}
        m_\alpha =&\;NS - N \cdot \Bigg(\sqrt{2S\log(N)\parenthesis{1-\frac{1}{\alpha}\frac{\log^{(2)} N}{2 \log N}} - \parenthesis{\log(N) \parenthesis{1-\frac{1}{\alpha}\frac{\log^{(2)} N}{2 \log N}}}^2} \\
        &\qquad\qquad\qquad - \log(N)\parenthesis{1-\frac{1}{\alpha}\frac{\log^{(2)} N}{2 \log N}}\Bigg)\\
        =&\;NS - N \cdot \parenthesis{\sqrt{S \log(N)/2 - \parenthesis{\log(N)/4}^2} - \log(N)/4},
    \end{align*}
    where the second equality uses the definition of $\alpha.$} Plugging this into \eqref{eq:8elmachtoub}:
    \begin{align}\label{eq:tight_nf_bound}
\mathbb{E}[R^{nf}] \leq NS - N \cdot \parenthesis{\sqrt{S \log(N)/2 - \parenthesis{\log(N)/4}^2} - \log(N)/4}\left(1-o(1)\right).
    \end{align}
    We can thus conclude that $\mathbb{E}[\Rn] = NS - \thetan$, where $\thetan \in \Omega(\sqrt{S}).$
    This establishes \cref{prop:nf_objective} (i).

    \medskip

    For \cref{prop:nf_objective} (ii), note that \cref{lem:long_run_revenue} implies ${Rev}^{nf} = \hat{p} - \delta \cdot 0 = \hat{p}.$ 
    
    We now bound the long-run average inventory costs under the no-flex policy. We have:
    \begin{align*}
    \mathcal{K}^{nf} = \frac{K}{\mathbb{E}[R^{nf}]} =  \frac{K}{NS - \theta_{nf}} \geq \frac{K}{NS}. 
    \end{align*}
    Moreover, by Eq. (18) in \citet{elmachtoub2019value}, 
    \begin{align*}
    \mathcal{H}^{nf} \geq h\cdot \frac{(NS)^2-\frac{NS}{2} \thetan}{2(NS - \thetan)}=  \frac{h}{2} \frac{(NS)^2-NS\theta_{nf}+\frac{NS}{2} \thetan}{NS-\thetan}&= \frac{h}{2} \frac{(NS)\left(NS-\thetan\right) + \frac{NS}{2} \thetan}{NS-\thetan}\\
    &\geq \frac{h}{2}\left(NS+\frac12\theta_{nf}\right).
    \end{align*}
    Putting these two together, we obtain: 
    \begin{align}\label{eq:nf_tight}
        {Inv}^{nf} &\geq  \frac{K}{NS} + \frac{h}{2} \cdot NS + \frac{h}{2} \cdot \frac{\thetan}{2} = \frac{K}{NS} + \frac{h}{2} \cdot NS + \xi_{nf},
    \end{align}
     {where } $\xi_{nf} \in \Omega\parenthesis{\frac{1}{\sqrt{S}}}$, since $h \in \Theta\parenthesis{\frac{1}{S}}$ and $\thetan \in \Omega(\sqrt{S})$.
\end{proof}

\medskip

\subsubsection{Proof of \texorpdfstring{\cref{prop:af_objective}}{Lg}} \label{app:af_objective}

\begin{proof}{Proof.}
    By Lemma 4 in \citet{elmachtoub2019value}, $\mathbb{E}[\Ra] = NS - \thetaa$, where $\thetaa \in \mathcal{O}\left(1\right)$. Since the always-flex policy offers the opaque product in every period, we trivially have $\mathbb{E}[M^{a}] = q_o \cdot \mathbb{E}[\R^{a}] = \Theta(S)$. This establishes \cref{prop:af_objective} (i).

    For \cref{prop:af_objective} (ii), note that \cref{lem:long_run_revenue} implies ${Rev}^{a} = \hat{p} - \delta \cdot q_o.$ 

    We now bound the long-run average inventory costs under the always-flex policy. We have:
    \[\mathcal{K}^a = \frac{K}{\mathbb{E}[R^a]} = \frac{K}{NS - \thetaa},\] again using the fact that $\mathbb{E}[R^a] = NS-\thetaa$. To bound $\mathcal{H}^a$, note that for any policy $\pi$:
    \begin{align}
        \mathcal{H}^\pi &= \frac{(2NS + 1)\mathbb{E}[R^\pi] - \mathbb{E}[(R^\pi)^2]}{2 \mathbb{E}[R^\pi]}h
        \leq \frac{(2NS+1)\mathbb{E}[R^\pi] - \mathbb{E}[R^\pi]^2}{2 \mathbb{E}[R^\pi]}h
        = \frac{2NS +1 - \mathbb{E}[R^\pi]}{2}h, \label{eq:inv_derivation}
    \end{align}
    where the second inequality follows from Jensen's inequality. Thus, for the always-flex policy, we have:
    \[\mathcal{H}^a \leq \frac{2NS +1 - \mathbb{E}[\Ra]}{2}h = \frac{h}{2}(NS+1+\theta_{a}).\] Thus,
    \begin{align}\label{eq:af_in_detail}
        {Inv}^{a} =\frac{K}{NS - \theta_{a}} + \frac{h}{2}(NS+1+\theta_{a})
        &= \frac{K}{NS} \cdot \frac{1}{1-\frac{\theta_{a}}{NS}} + \frac{h}{2}(NS+1+\theta_{a}) \notag \\
         &\leq \frac{K}{NS} \cdot \left(1+\theta_a'\right) + \frac{h}{2}(NS+1+\theta_{a}),
    \end{align}
    where $\theta_a' \in \mathcal{O}\parenthesis{\frac1S}$, since $\frac{1}{1-\frac{\theta_{a}}{NS}} \leq 1+2\frac{\theta_{a}}{NS}$, for large enough $S$, and $\theta_a \in \mathcal{O}(1)$. Using the fact that $K\in\Theta(S)$ and $h\in\Theta\parenthesis{\frac1S}$, we obtain:
    \begin{align}
        {Inv}^{a} \leq &\;\frac{K}{NS} + \frac{h}{2} \cdot NS +\xi_{a}, \text{ where }\xi_{a} \in \mathcal{O}\parenthesis{\frac{1}{S}}.
    \end{align}
\end{proof}

\subsection{Analysis of the Semi-Dynamic Policy}\label{apx:opaque-semi} 

\subsubsection{Formal description}\label{apx:alg-semi}

We present the semi-dynamic policy for the opaque selling model in \Cref{alg:semi} below.
\begin{algorithm}[H]
\caption{Semi-Dynamic Policy $\pid$}\label{alg:semi}
%\SetAlgoLined
%\KwResult{Write here the result }
\begin{algorithmic}[1]
\Require $t = 1, \zsemi_i(0)=S\;\forall \ i \in [N]$, $\flexaction^d(t) = 0\  \forall \ t \geq 0$, {constant $\cstsemi > 0$} 
\While{$ \min_i \zsemi_i(t-1) >0$}
  \If{$\flexaction^d(t) = 1$ and the $t$-th customer purchases the opaque product}
  \State Sample $\mathcal{F}(t)$.
  \State Allocate product $j = \argmax_{i \in \mathcal{F}(t)} \zsemi_i(t-1)$.
  \Else 
  \State Fulfill customer's request for preferred product $j$.
  \EndIf
    \For{$i \in [N]$}
    \State Update inventory levels:
    \[\zsemi_i(t)= \begin{cases}
    \zsemi_i(t-1)-1 &\quad \text{if } i = j \\
    \zsemi_i(t-1) &\quad \text{otherwise}.
    \end{cases}\]
    \EndFor
  \If{$\Gap_I^{d}(t) \geq \frac{\cstsemi(T-t)q_o}{N}$}
        \State Offer the opaque product from $t+1$ onwards, i.e., set $\flexaction^d(t') = 1\;\forall \ t' > t$
  \EndIf
  \State $t = t+1$
\EndWhile
\end{algorithmic}
\end{algorithm}

\subsubsection{Proof of \texorpdfstring{\cref{thm:dynamic_objective}}{Lg}}

For ease of notation, we define $q := q_o$. Note that $\delta \in \Theta(1)$ and $\delta > \parenthesis{\frac{1}{4} - \frac{1}{2N}} \gamma$ imply that $q > 0$ is a constant independent of $S$. 
%Throughout the proof, we use \mbox{$\Tstar = \inf\{t: \Gapd_I(t) \geq \frac{\cstsemi(T-t)q}{N}\}$} to denote the random variable denoting the period in which opaque selling starts.%, where we use the convention that  $\Tstar = \Rd$ if the threshold condition is never triggered.  

\begin{proof}{Proof.}
The following fact, derived by \citet{elmachtoub2019value}, will allow us to (i) reduce the opaque selling model to the balls-into-bins model, and (ii) show that ensuring a long expected replenishment cycle requires a bound on the tail of the gap of the system, for all $t \geq S$. Specifically, \Cref{prop:relation} below uses the analogy of a customer being allocated product $i$ as a ball landing into bin $i$.

\begin{lemma}[Lemma 1 in \citet{elmachtoub2019value}] \label{prop:relation}
Consider any policy $\pi$ designed for the opaque selling problem. Let $x^\pi_i(t)$ be the load of the coupled balls-into-bins model under $\pi$, which couples each allocation decision of customer to product, to an allocation decision of ball to bin. Then, the distributions of the replenishment cycle length in the opaque selling model and the maximum load across bins in the balls-into-bins model are related as follows: 
$$\mathbb{P}(\R^\pi \leq t) = \mathbb{P}(\max_i x^\pi_i(t) \geq \Sinv) \quad \forall \, t \in \mathbb{N}^+.$$
\end{lemma}

Noting that, under this coupling, $\min_{i\in[N]}z_i^\pi(t) = S-\max_{i\in[N]}x_i^\pi(t)$, we have, for all $t \in \mathbb{N}^+$:
{
\begin{equation}\label{eq:gap_equivalent}
    \Gap_I^{\pi}(t) = S - \frac{t}{N} - \min_{i \in [N]} z_i^{\pi}(t) = \max_{i \in [\Nballs]} x_i^\pi(t) - \frac{t}{\Nballs} = \Gap^{\pi}(t).%\footnote{Since $\Gap_I^{\pi}(t)$ is only defined when $\min_{i \in [N]} z_i^{\pi}(t) \geq 0$, \eqref{eq:gap_equivalent} similarly requires this condition to hold.}
\end{equation}
Therefore, without loss of generality in the remainder of the proof we focus on the coupled balls-into-bins process. In particular, since $\Gapp(t) := \max_j \xp_j(t) - \frac{t}{N}$, \cref{prop:relation} implies
}
\begin{equation}\label{eq:relation}
    \mathbb{P}(R^\pi \leq t) = \mathbb{P}\left(\Gapp(t) \geq S - \frac{t}{N}\right) \quad \forall \ t \in \mathbb{N}^+.
\end{equation}

\medskip

Having established the reduction to the balls-into-bins model, we proceed with the proof of parts (i) and (ii) of the theorem. The main driver of the bounds on $Rev^d$ and $Inv^d$ is the bound on the expected replenishment cycle length induced by the semi-dynamic policy. We state it as a lemma below, deferring its proof to Appendix \ref{app:opaque-semi}. 

\begin{lemma}\label{thm:opaque-semi}
$\mathbb{E}[\Rd] = NS - \thetad$, where \mbox{$\thetad \in \mathcal{O}(1)$, for $\cstsemi \leq \frac{1}{10 \binom{N}{2}}$ and $\delta \in \Theta(1)$.}
\end{lemma}

With this lemma in hand, we prove \Cref{thm:dynamic_objective}.

    We first bound $\mathbb{E}[M^d]$. We have:
    \begin{align}\label{eq:md-bound}
        \EE{M^d} =\EE{\sum_{t = 1}^{\Rd-\Tstar} \mathbbm{1}\{{V_o - p_o \geq \max_i \bracket{V_i - \hat{p}}}\}} &\leq\EE{\sum_{t = 1}^{T-\Tstar} \mathbbm{1}\{V_o - p_o \geq \max_i\bracket{V_i - \hat{p}}\}} \notag \\&= q_o \EE{T - \Tstar},
    \end{align}
    where the inequality uses the fact that $R^d \leq T$, and the last equality is an application of Wald's identity. Since the semi-dynamic policy mimics the no-flex policy until {$\Tstar$}, we have: $$\Tstar = \inf\left\{t:\Gapnf(t) \geq \frac{\cstsemi(T-t)q}{N}\right\}.$$ 
    Then, by \cref{thm:ball_semi2}, $\mathbb{E}[T-T^\star] \in \mathcal{O}\left(\sqrt{T}\right) = \mathcal{O}\left(\sqrt{S}\right)$. Thus, $\EE{M^d} \in \mathcal{O}\left(\sqrt{S}\right)$.

    We now bound the long-run average revenue of the semi-dynamic policy. By \cref{lem:long_run_revenue}:
    \begin{align}\label{eq:rev_tight}
        {Rev}^{d} &=  \hat{p}-\delta \cdot \frac{\mathbb{E}[M^d]}{\mathbb{E}[{R}^d]}=  \hat{p} - \delta \cdot \frac{q_o \cdot \mathbb{E}[T - \Tstar]}{NS - \thetad} = \hat{p} - \delta \cdot q_o \cdot \xi^1_{d},
    \end{align}
    where $\xi^1_{d} \in \mathcal{O}\parenthesis{\frac{1}{\sqrt{S}}}$. Here, the second equality applies \eqref{eq:md-bound} and \Cref{thm:opaque-semi} to $\mathbb{E}[M^d]$ and $\mathbb{E}[R^d]$, respectively. The third equality uses the just-derived facts that $\mathbb{E}[T-\Tstar] \in \mathcal{O}(\sqrt{S})$ and $\theta_d \in \mathcal{O}(1)$, by \Cref{thm:opaque-semi}.

    We now bound the inventory costs incurred by the semi-dynamic policy. By \Cref{thm:opaque-semi}, \[\mathcal{K}^d = \frac{K}{\mathbb{E}[\Rd]} = \frac{K}{NS - \theta_{d}}.\] Applying the bound from \eqref{eq:inv_derivation} on the long-run average holding cost, we have: 
    \begin{equation}\label{eq:inv_tight}
       \mathcal{H}^d \leq \frac{2NS +1 - \mathbb{E}[\Rd]}{2}h = \frac{h}{2}(NS+{1}+\theta_{d}),
    \end{equation}
    where the equality again follows from \Cref{thm:opaque-semi}.
    Putting these two together, we obtain: 
    \begin{align}\label{eq:final-inv-for-cor}
    {Inv}^{d} =\frac{K}{NS - \theta_{d}} + \frac{h}{2}(NS+1+\theta_{d}) = \frac{K}{NS} + \frac{h}{2} \cdot NS + \xi^2_{d}, \text{ where }\xi^2_{d} \in \mathcal{O}\parenthesis{\frac{1}{S}},
    \end{align}
    where the final equality uses the same bounding arguments as those used in the proof of \Cref{prop:af_objective}.
\end{proof}

\medskip

\subsubsection{Proof of \texorpdfstring{\cref{thm:opaque-semi}}{Lg}}\label{app:opaque-semi}

The proof of \Cref{thm:opaque-semi} relies on the following useful facts. We defer their proofs to Appendix \ref{apx:fic_allocation}, \ref{apx:inventory_bound_lem}, and \ref{apx:af_gap}, respectively.

Suppose the gap of the system is bounded by some $a > 0$ at the time the semi-dynamic policy begins flexing. Consider moreover a second system initialized at $t_1$ under a perfectly balanced state, but penalized by $a$ additional balls in each bin initially. \Cref{cl:fic_allocation} establishes the intuitive fact that the likelihood that at least $S$ balls have landed in a bin before time $t \geq t_1$ in the first system is no more than the likelihood that there are least $S$ balls in any bin by time $t$ (counting the initial $a$ balls) in this second system.
\begin{lemma}\label{cl:fic_allocation}
{Fix $t_1 > 0$ (where $t_1$ is potentially random), and consider any policy $\pi$ such that (i) $\flexaction^\pi(t) = 0$ for all $t \leq t_1$ and (ii) $\flexaction^\pi(t) = 1$ for all $t > t_1$. Let $F'$ be any subset of the history before $t_1$ such that $$F' \subseteq F_a = \{\history{t_1} \in \historyset{t_1}\mid \Gapp(t_1) \leq a\}$$ for some $a > 0.$} Then, \[\mathbb{P}(\Gapp(t) \geq S- t/N\mid {t_1}, F') \leq \mathbb{P}(\Gapp(t) \geq S- t/N - a\mid {t_1}, \Gapp(t_1) = 0)\  \forall \ t \geq t_1.\]
\end{lemma}

Fix any time $t_2$ after the start of the flexing horizon. \Cref{lem:inventory_bound} below reduces the task of bounding the likelihood that at least $S$ balls have been placed in a single bin by some time $t \in \{t_2+1,\ldots,N(S-1)+1\}$ to that of bounding the likelihood that the loads of the maximally and minimally loaded bins at $t_2$ never intersected between $t_1$ and $t_2$.
\begin{lemma}\label{lem:inventory_bound}
{Fix $t_1 > 0$ (where $t_1$ is potentially random), and consider any policy $\pi$ such that (i) $\flexaction^\pi(t) = 0$ for all $t \leq t_1$ and (ii) $\flexaction^\pi(t) = 1$ for all $t > t_1$. Define, for random variable $t_2$ such that \mbox{$t_1 < t_2 \leq N(S-1)+1$},} and for $i, j \in [N]$:
\begin{align*}
F_{ij}^1 &:= \left\{\history{t_2} \in \historyset{t_2}\mid i \in \arg\max_{k\in[N]} \xp_k(t_2), j \in \arg\min_{k\in[N]} \xp_k(t_2), \xp_i(t) \neq \xp_j(t), \forall \ t \in \{t_1,...,t_2\}\right\}.
\end{align*}
Then, there exists a constant $a_3 > 0$ such that: $$\sum_{t = t_2+1}^{N(S-1)+1} \mathbb{P}(\Gapp(t) \geq S- t/N {\mid t_1, t_2}) \leq a_3 + \left(N(S-1)+1-t_2\right)\sum_{i,j}\mathbb{P}(F_{ij}^1 {\mid t_1, t_2}).$$
\end{lemma}

\medskip

Finally, throughout the proof we abuse notation and use $\Gapa(t)$ to denote the gap of a system initialized at 0 and for which the always-flex policy as been run for $t$ periods. The following lemma leverages results from \citet{elmachtoub2019value} to establish a bound on the tail of $\Gapa(t)$.
\begin{lemma} \label{prop:af_gap}
There exist constants $\cstaf,\beta > 0$ such that, for any $\eta \geq 0$,  \[\mathbb{P}(\Gapa(t) \geq \eta) \leq \beta e^{-\cstaf \eta} \quad \forall \ t \geq 0.\]
\end{lemma}

\medskip

With these lemmas in hand, we prove \Cref{thm:opaque-semi}.

\begin{proof}{Proof of \Cref{thm:opaque-semi}.}
We first express the length of the expected replenishment cycle as a function of the tail of $\Gapp(t)$, for any policy $\pi$. We defer the proof of \Cref{prop:final_relation} below to the end of the section.
\begin{proposition} \label{prop:final_relation}
For any policy $\pi$, $\mathbb{E}[R^\pi] = N(S-1)+2 - \sum_{t = S}^{N(S-1)+1} \mathbb{P}(\Gapp(t) \geq S- t/N).$
\end{proposition}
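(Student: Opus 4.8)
\textbf{Proof proposal for \Cref{prop:final_relation}.}

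The plan is to express $\mathbb{E}[R^\pi]$ via the standard tail-sum identity for nonnegative integer-valued random variables, $\mathbb{E}[R^\pi] = \sum_{t \geq 0} \mathbb{P}(R^\pi > t) = \sum_{t \geq 1}\mathbb{P}(R^\pi \geq t)$, and then to rewrite each term $\mathbb{P}(R^\pi \geq t)$ as $1 - \mathbb{P}(R^\pi \leq t-1)$ using \eqref{eq:relation} (the restatement of \Cref{prop:relation}). First I would pin down the support of $R^\pi$: a replenishment cycle must last at least $S$ periods (no bin can hit load $S$ before $S$ balls have been thrown), and it must last at most $N(S-1)+1$ periods, since after $N(S-1)+1$ balls at least one of the $N$ bins has load at least $S$ by pigeonhole. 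Hence $R^\pi$ is supported on $\{S, S+1, \dots, N(S-1)+1\}$, which means $\mathbb{P}(R^\pi \geq t) = 1$ for all $t \leq S$ and $\mathbb{P}(R^\pi \geq t) = 0$ for all $t \geq N(S-1)+2$.

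Next I would split the tail sum accordingly:
\begin{align*}
\mathbb{E}[R^\pi] &= \sum_{t=1}^{\infty}\mathbb{P}(R^\pi \geq t) = \sum_{t=1}^{S}\mathbb{P}(R^\pi \geq t) + \sum_{t=S+1}^{N(S-1)+1}\mathbb{P}(R^\pi \geq t) \\
&= S + \sum_{t=S+1}^{N(S-1)+1}\bigl(1 - \mathbb{P}(R^\pi \leq t-1)\bigr).
\end{align*}
Re-indexing the second sum with $s = t-1$ ranging over $\{S, \dots, N(S-1)\}$ gives $\sum_{t=S+1}^{N(S-1)+1} 1 = N(S-1)+1 - S$ for the constant part, so the constant terms combine to $S + N(S-1)+1-S = N(S-1)+1$; and then, using \eqref{eq:relation} to substitute $\mathbb{P}(R^\pi \leq s) = \mathbb{P}(\Gap^\pi(s) \geq S - s/N)$, I get
\begin{align*}
\mathbb{E}[R^\pi] &= N(S-1)+1 - \sum_{s=S}^{N(S-1)}\mathbb{P}\!\left(\Gap^\pi(s) \geq S - \tfrac{s}{N}\right).
\end{align*}
Finally I would reconcile the summation range with the claimed one: the missing term at $s = N(S-1)+1$ is $\mathbb{P}(\Gap^\pi(N(S-1)+1) \geq S - (N(S-1)+1)/N)$; since $S - (N(S-1)+1)/N = (S-1)(1 - 1/N) \cdot \frac{N}{N-1}\cdot\frac{N-1}{N}$... more directly, $S - \frac{N(S-1)+1}{N} = \frac{NS - N(S-1) - 1}{N} = \frac{N-1}{N} > 0$, and since after $N(S-1)+1$ throws some bin has load at least $S$ almost surely, this probability equals $1$. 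Adding and subtracting this term of value $1$ turns the $+1$ into a correction that extends the sum to $s = N(S-1)+1$ while changing the leading constant from $N(S-1)+1$ to $N(S-1)+2$, yielding exactly $\mathbb{E}[R^\pi] = N(S-1)+2 - \sum_{t=S}^{N(S-1)+1}\mathbb{P}(\Gap^\pi(t) \geq S - t/N)$.

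The only genuinely delicate point is the bookkeeping at the two endpoints of the summation range — making sure the almost-sure events ``$R^\pi \geq S$'' and ``$\Gap^\pi(N(S-1)+1) \geq (N-1)/N$'' are correctly accounted for, and that the off-by-one in re-indexing $t \mapsto t-1$ is handled consistently; none of it is conceptually hard, but it is the place where an error would creep in. Everything else is the routine tail-sum identity plus a direct appeal to \Cref{prop:relation}.
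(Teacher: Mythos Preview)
Your proposal is correct and follows essentially the same route as the paper: the tail-sum identity $\mathbb{E}[R^\pi]=\sum_{t\geq 1}\mathbb{P}(R^\pi\geq t)$ combined with \Cref{prop:relation} and the support bounds $S\leq R^\pi\leq N(S-1)+1$. Your endpoint bookkeeping (adding and subtracting the almost-sure term at $t=N(S-1)+1$) is in fact cleaner than the paper's, which has an off-by-one in the tail sum and ends with an inequality rather than the stated equality.
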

Thus, it suffices to show the following:
\[\sum_{t = S}^{N(S-1)+1} \mathbb{P}(\Gapd(t) \geq S- t/N) \in \mathcal{O}(1).\]

We first argue that $\Gapd(t) < S-t/N$ for all $t < \Tstar$. To see this, recall that $c_d q \in (0,1]$ and \mbox{$T = N(S-1)+1$}. As as result, $\frac{\cstsemi(T-t)q}{N} < S - t/N$ for all $t \leq T$. Thus:
\begin{align*}
    \Gapd(t) < \frac{\cstsemi(T-t)q}{N} \quad \forall \, t < \Tstar \implies \Gapd(t) < S - t/N \quad \forall \, t < \Tstar,
\end{align*}
where the inequality on the left-hand side of the implication follows from the fact that $\Tstar$ is the first time the threshold condition is triggered, and the inequality on the right-hand side of the implication uses the upper bound on $\frac{\cstsemi(T-t)q}{N}$ derived above.

Fix a realization of $\Tstar$, and let $\Ttilde := \left\lceil\frac{\Tstar+T}{2}\right\rceil$ be the midpoint between $\Tstar$ and $T$. We then have: 
\begin{align}
       \sum_{t=S}^{N(S-1)+1} \PP{\Gapd(t) \geq S-t/N \mid \, T^\star} &= \sum_{t=T^\star}^{N(S-1)+1} \PP{\Gapd(t) \geq S-t/N \, \mid \, T^\star} \label{eq:tstar_guarantee}\\
       &= \underbrace{\sum_{t=T^\star}^{\Ttilde} \PP{\Gapd(t) \geq S-t/N\, \mid \, T^\star}}_{(I)} \notag \\
       &\qquad + \underbrace{\sum_{t=\Ttilde + 1}^{N(S-1)+1} \PP{\Gapd(t) \geq S-t/N\, \mid \, T^\star}}_{(II)} \notag.
\end{align}

We show that for any value of $\Tstar$ each of these two terms is upper bounded by a constant; applying the law of total probability over $\Tstar$, the lemma is then shown.

\medskip

\noindent\textbf{Bounding $(I)$.} We reduce the task of bounding the likelihood that at least $S$ balls have been placed in a single bin by time $t \in \{\Tstar,\ldots,\Ttilde\}$ to the analysis of an always-flex policy run for $t-\Tstar$ periods on an initially empty system. 

To do so, we first upper bound $\Gapd(\Tstar)$. Noting that (i) {$\Gapd(T^\star-1) < \frac{\cstsemi (T-(\Tstar-1))q}{N}$ by definition of $\Tstar$, and (ii) $\Gapd(T^\star) \leq \Gapd(T^\star-1) + 1$, we have: \begin{align}\label{eq:gap_semi_ub}
    \Gapd(\Tstar) < \frac{\cstsemi q(T-\Tstar)}{N} + \frac{\cstsemi q}{N}+1.
\end{align}}

As a result, for all $t \in \{\Tstar,\ldots,\Ttilde\}$,
\begin{align}
   \mathbb{P}\left(\Gapd(t) \geq S-t/N \mid \Tstar\right) &=  \mathbb{P}\left(\Gapd(t) \geq S- t/N \, \mid \, \Gapd(\Tstar) < \frac{\cstsemi q(T-\Tstar)}{N} + \frac{\cstsemi q}{N}+1, T^\star\right) \label{eq:opaque-gap-ub-prob}
\end{align}

Let $E_{\Tstar}$ be the event that $\Gapd(\Tstar) < \frac{\cstsemi q(T-\Tstar)}{N} + \frac{\cstsemi q}{N}+1$. {Applying \Cref{cl:fic_allocation} to \eqref{eq:opaque-gap-ub-prob}, with \mbox{$F' = E_{\Tstar}, t_2 = \Tstar$} and $a =\frac{\cstsemi q(T-\Tstar)}{N} + \frac{\cstsemi q}{N}+1$, we have, for all $t \geq \Tstar$:}
\begin{align*}
    &\mathbb{P}\left(\Gapd(t) \geq S- t/N \, \mid \, \Gapd(\Tstar) < \frac{\cstsemi q(T-\Tstar)}{N} + \frac{\cstsemi q}{N}+1, T^\star\right)\\
    &\leq \mathbb{P}\left(\Gapd(t) \geq S- t/N-\left(\frac{\cstsemi q(T-\Tstar)}{N} + \frac{\cstsemi q}{N}+1\right)\, \mid \,\Gapd(\Tstar) = 0, T^\star\right) \\
    &=\mathbb{P}\left(\Gapa(t-\Tstar) \geq S- t/N-\left(\frac{\cstsemi q(T-\Tstar)}{N} + \frac{\cstsemi q}{N}+1\right) \mid T^\star\right),
\end{align*}
where the final equality follows from the observation that, conditional on $\Gapd(\Tstar) = 0$, the semi-dynamic policy takes the same action as the always-flex policy initialized on an empty system at $\Tstar$.
Note that
\begin{align}\label{eq:nonneg}
S- t/N-\left(\frac{\cstsemi q(T-\Tstar)}{N} + \frac{\cstsemi q}{N}+1\right) > 0 \quad \forall \ t \leq \Ttilde-2.
\end{align}
Therefore, by \Cref{prop:af_gap}, for all $t \in \{\Tstar,\ldots,\Ttilde-2\}$,
\begin{align*}
&\mathbb{P}\left(\Gapa(t-\Tstar) \geq S- t/N-\left(\frac{\cstsemi q(T-\Tstar)}{N} + \frac{\cstsemi q}{N}+1\right) \mid T^\star\right)\\ &\leq \beta\exp\left[-c_a \left(S- t/N-\left(\frac{\cstsemi q(T-\Tstar)}{N} + \frac{\cstsemi q}{N}+1\right)\right)\right].
\end{align*}

Summing over all $t \in \{\Tstar,\ldots,\Ttilde\}$, we have:
{
\begin{align}
   (I) 
    &\leq 2+ \sum_{t = \Tstar}^{\Ttilde-2} \beta \exp\left({-\cstaf\left[S- t/N-\left(\frac{\cstsemi q(T-\Tstar)}{N} + \frac{\cstsemi q}{N}+1\right)\right]}\right) \notag\\
    &\leq 2+\left(\int_{t = \Tstar - 1}^{\Ttilde-2} \beta \exp\left({-\cstaf\left[S- t/N-\left(\frac{\cstsemi q(T-\Tstar)}{N} + \frac{\cstsemi q}{N}+1\right)\right]}\right) dt\right)  \label{eq:exponential_small}\\
    &=2+\beta e^{-\cstaf\parenthesis{S - \left(\frac{\cstsemi q(T-\Tstar)}{N} + \frac{\cstsemi q}{N}+1\right)}} \cdot N/\cstaf \cdot \parenthesis{e^{\frac{\cstaf(\Ttilde-2)}{N}} - e^{\frac{\cstaf(\Tstar-1)}{N}}}  \notag\\
    &=2+\beta \cdot N/\cstaf \cdot \parenthesis{e^{-\cstaf\parenthesis{S - \frac{\Ttilde-2}{N} - \left(\frac{\cstsemi q(T-\Tstar)}{N} + \frac{\cstsemi q}{N}+1\right)}} - e^{-\cstaf\parenthesis{S - \frac{\Tstar-1}{N} - \left(\frac{\cstsemi q(T-\Tstar)}{N} + \frac{\cstsemi q}{N}+1\right)}}} .\notag
\end{align}}

By \eqref{eq:nonneg}, we obtain the existence of a constant $a_1 > 0$ such that $(I) \leq a_1$.

\medskip

\noindent\textbf{Bounding $(II)$.} Fix a constant $t_0 > 0$ determined in \Cref{lem:e1ij_semi}. Consider first the event that $\Ttilde-\Tstar < t_0$. In this case, we trivially have:
\begin{align*}
(II) \leq (T-\Ttilde) \leq t_0+1, 
\end{align*}
a constant by assumption.

Consider now the event that $\Ttilde-\Tstar \geq t_0$. For $i,j\in [N]$, let $E_{ij}^1$ denote the event such that (i) $i$ is a bin with the highest load in period $\Ttilde$ and $j$ is a bin with the smallest load in period $\Ttilde$, and (ii) the loads of bins $i$ and $j$ were never the same between $\Tstar$ and $\Ttilde$. Formally:
\begin{align*}
E_{ij}^1 & :=\left\{\history{\Ttilde} \in \historyset{\Ttilde}\mid i \in \arg\max_{k\in[N]} \xsemi_k(\Ttilde), j \in \arg\min_{k\in[N]} \xsemi_k(\Ttilde), \xsemi_i(t) \neq \xsemi_j(t), \forall \ t \in \{\Tstar,...,\Ttilde\}\right\}.
\end{align*}

{Applying \cref{lem:inventory_bound} to $(II)$, with $t_1 = \Tstar, t_2 = \Ttilde$ and $F_{ij}^1 = E_{ij}^1 \; \forall \ i,j,$ we obtain:} {
\begin{align}\label{eq:inventory_main2}
    (II) = \sum_{t=\Ttilde + 1}^{N(S-1)+1} \PP{\Gapd(t) \geq S-t/N\, \mid \, T^\star, \Ttilde} \leq a_3 + (T-\Ttilde)\sum_{i,j}\mathbb{P}(E_{ij}^1\mid \Tstar, \Ttilde)
\end{align}
for some constant $a_3 > 0$, where we also used the fact that $T=N(S-1)+1$ by definition. The equality above comes from the fact that conditioned on $T^\star$, $\Ttilde$ is deterministic and provides no additional information on the gap of the system.}
Therefore, it suffices to bound the probability that the maximum and minimum loads never intersect between $\Tstar$ and $\Ttilde$. 

In order to do so, we will show that $x_i^d(\Tstar)-x_j^d(\Tstar)$ is upper bounded by a linear function of $\Ttilde-\Tstar$ and apply \Cref{lem:e1ij_semi} to $E_{ij}^1$. \Cref{lem:opaque-diff-ub} leverages the threshold condition to obtain an explicit bound. We defer its proof to the end of the section.
\begin{lemma}\label{lem:opaque-diff-ub}
$x_i^d(\Tstar)-x_j^d(\Tstar) \leq \frac{1}{5{N\choose 2}}q(\Ttilde-\Tstar)+c_dq+N$.
\end{lemma}

Then, applying \Cref{lem:e1ij_semi} to $E_{ij}^1$, with $t_1 = \Tstar$, $t_2 = \Ttilde$, $\policycst = \frac{1}{5 \binom{N}{2}}$ and $a = \cstsemi q+N$, there exists a  constant $\alpha_0 > 0$ such that: {
\begin{align}\label{eq:bound_t0}
    \mathbb{P}\parenthesis{E_{ij}^1 \mid \Ttilde, T^\star, \Ttilde-T^\star \geq t_0} &= \mathbb{P}\parenthesis{E_{ij}^1 \mid x_i^d(\Tstar)-x_j^d(\Tstar) \leq \frac{1}{5 \binom{N}{2}}q(\Ttilde-T^\star) + \cstsemi q+N, \Ttilde, T^\star, \Ttilde-T^\star \geq t_0} \notag \\
    &\leq 5 e^{-\alpha_0(\Ttilde-T^\star)}.
\end{align}
}

Since $\Ttilde-T^\star \geq t_0$, we plug \eqref{eq:bound_t0} back into \eqref{eq:inventory_main2} to obtain:
\begin{align*}
(II) \leq a_3 + (T-\Ttilde)\sum_{i,j}\mathbb{P}(E_{ij}^1\mid \Tstar, \Ttilde, \Ttilde-T^\star \geq t_0)&\leq a_3 + N^2 (T-\Ttilde)\cdot 5 e^{-\alpha_0(\Ttilde-T^\star)}\\
&\leq a_3 + N^2\left(\Ttilde-\Tstar+1\right)\cdot 5 e^{-\alpha_0(\Ttilde-T^\star)}\\
&\leq a_2,
\end{align*}
for some constant $a_2 > 0$, thereby completing the proof.
\end{proof}

\medskip

\begin{proof}{Proof of \Cref{prop:final_relation}.}
Recall, the maximum length of a replenishment cycle is $N(S-1)+1$. Therefore,
\begin{align*}
    \mathbb{E}[R^\pi] = \sum_{t = 0}^{N(S-1)+1} \mathbb{P}(R^\pi \geq t)
    % &= \sum_{t = 0}^{N(S-1)+1} (1-\mathbb{P}[R^\pi < t])\\
    &= N(S-1)+2-\sum_{t = S}^{N(S-1)+1}\mathbb{P}(R^\pi < t)\\
    & \geq  N(S-1)+2 - \sum_{t = S}^{N(S-1)+1} \mathbb{P}(\Gapp(t) \geq S- t/N),
\end{align*}
where the last inequality follows from \eqref{eq:relation}. Notice also that the second equality uses the fact that \mbox{$\mathbb{P}(R^\pi < t) = 0\ \forall \ t < S,$} since the platform cannot run out of inventory of any type of product before $S$ products are sold.
\end{proof}

\medskip

\begin{proof}{Proof of \Cref{lem:opaque-diff-ub}.}
Note that {
\begin{align*}
    x_i^d(\Tstar)-x_j^d(\Tstar) \leq \max_{k} \xsemi_{k}(T^\star) - \min_{k} \xsemi_{k}(T^\star) =&\;N \cdot \max_{k} \xsemi_{k}(T^\star) - \parenthesis{(N-1) \cdot \max_{k} \xsemi_{k}(T^\star) + \min_{k} \xsemi_{k}(T^\star)} \\
    \leq&\;N\cdot \parenthesis{\max_{k} \xsemi_{k}(T^\star) - T^\star/N} = N \Gapd(\Tstar),
\end{align*}where the inequality uses the fact that $(N-1) \cdot \max_{k} \xsemi_{k}(T^\star) + \min_{k} \xsemi_{k}(T^\star) \geq T^\star.$ } Applying the upper bound on $\Gapd(\Tstar)$ derived in \eqref{eq:gap_semi_ub}, we obtain:
\begin{align}
    x_i^d(\Tstar)-x_j^d(\Tstar)
    &\leq N \left(\frac{\cstsemi q(T-\Tstar)}{N} + \frac{\cstsemi q}{N}+1\right) \notag \\
    &= \cstsemi q(T-T^\star) + \cstsemi q+N \notag \\
    &\leq2\cstsemi q(\Ttilde-T^\star) + \cstsemi q+N  \notag \\
    &= \frac{1}{5 \binom{N}{2}}q(\Ttilde-T^\star) + \cstsemi q+N \label{eq:min_max_ub},
\end{align}
where the second equality uses the definition of $\Ttilde$ as the midpoint between $\Tstar$ and $T$, and the final equality plugs in the fact that $\cstsemi \leq \frac{1}{10{N\choose 2}}$.
\end{proof}

\medskip

\subsubsection{Proof of \Cref{cl:fic_allocation}}\label{apx:fic_allocation}

\begin{proof}{Proof.}
Consider any sample path $\sigma'(t_1) \in F'$. Consider moreover a ``fictitious'' set of bins in which the fictitious load of bin $k$ at $t_1$ is denoted by $\zfic_k(t_1)$, with $\zfic_k(t_1) = t_1/N+a$. Intuitively, the fictitious bins represent a balanced system that is initially penalized by having $a$ additional balls in each bin. 
We assume that both systems --- also referred to as bin configurations --- see the same realizations of randomness (i.e., the same sequence of arrivals and allocations of balls to bins) for all $t \geq t_1$.

We will argue that:
\begin{align}\label{eq:fic_bin}
    \zreal_k(t) \leq \zfic_k(t) \quad \forall \ t \geq t_1, k \in [N],
\end{align}
which will then imply that $\max_k \zreal_k(t) - t/N \leq \max_k \zfic_k(t)- t/N$ for all $t \geq t_1.$ 
We use this to bound the tail of $\Gapp(t)$, for all $t \geq t_1$:
\begin{align*}
    &\mathbb{P}(\Gapp(t) \geq S- t/N \mid t_1, F')\\ &= \sum_{\historypath{t_1} \in F'}\mathbb{P}(\max_k \zreal_k(t) - t/N \geq S- t/N, \sigma'(t_1)\mid t_1, F')\\
    &= \sum_{\historypath{t_1} \in F'}\mathbb{P}(\max_k \zreal_k(t) - t/N \geq S- t/N\mid t_1, F', \sigma'(t_1))\mathbb{P}(\sigma'(t_1)\mid t_1, F')\\
    &= \sum_{\historypath{t_1} \in F'}\mathbb{P}(\max_k \zreal_k(t) - t/N \geq S- t/N\mid t_1, \sigma'(t_1))\mathbb{P}(\sigma'(t_1)\mid t_1, F')\\
    &\leq \sum_{\historypath{t_1} \in F'} \mathbb{P}(\max_k \zfic_k(t)- t/N \geq S- t/N\mid t_1, \historypath{t_1})\mathbb{P}(\historypath{t_1} \mid t_1, F')\\
    &= \mathbb{P}(\max_k \zfic_k(t)- t/N \geq S- t/N \mid t_1, F')\\
    &= \mathbb{P}(\max_k \zfic_k(t)- t/N \geq S- t/N \mid t_1),
\end{align*}
where the final equality uses the fact that $F'$, the history of the ``real" system, provides no additional information on loads of the fictitious system. 

Notice that the fictitious system had ``overloaded'' the system with $a$ additional balls. In order to reduce back to the original system, we consider an ``unloaded" system in which the load of bin $k$ at $t_1$ is denoted by $\zrestart_k(t_1)$, with $\zrestart_k(t_1) = t_1/N$ and $\Gaprestart(t) := \max_k \zrestart_k(t) - t/N$. Note that, by construction, $\zfic_k(t) = \zrestart_k(t) + a$ for all $k \in [N]$, $t \geq t_1$. Using this fact above, we obtain:
\begin{align*}
  \mathbb{P}(\Gapp(t) \geq S- t/N \mid t_1, F')  &\leq \mathbb{P}(\max_k \zrestart_k(t) + a - t/N \geq S- t/N \mid t_1)\\
    &= \mathbb{P}(\Gaprestart(t)+a \geq S- t/N\mid t_1)\\
    &= \mathbb{P}(\Gaprestart(t) \geq S- t/N-a\mid t_1) \\
    &= \mathbb{P}(\Gapp(t) \geq S- t/N-a\mid t_1, \Gapp(t_1) = 0),
\end{align*}
where the final equality uses the fact that, conditional on $\Gapp(t_1) = 0$, the same decisions are made in the original and unloaded systems.

What is left to conclude the proof is \eqref{eq:fic_bin}, which we now show by induction. 

\noindent\textbf{Base case: $t = t_1$.}
%\chdelete{We first note that at $t = t_1$,} 
Since $\sigma'(t_1)$ is such that $\Gapp(t) \leq a$, we have:
 $$\zreal_k(t_1) \leq \max_{k'} \zreal_{k'}(t_1) \leq t_1/N + a = \zfic_k(t_1), \forall \ k \in [N].$$

\noindent\textbf{Inductive step.} Fix $t \in \{t_1+1,\ldots,T\}$, and suppose $\zreal_k(t) \leq \zfic_k(t), \forall \ k \in [N].$ We argue that \mbox{$\zreal_k(t+1) \leq \zfic_k(t+1), \forall \ k  \in [N],$} by discussing the following cases. Recall, $f(t)$ is used to denote whether or not a ball is flexible, $\mathcal{F}(t)$ denotes the flex set of two randomly chosen bins, and $P(t)$ denotes the non-flexible preferred bin.
\begin{enumerate}[label=\arabic{enumi}.]
    \item Suppose $\flextype{t} = 0$ and $\preferred{t}= k_1$, for some $k_1 \in [N]$. In this case, $$\zreal_{k_1}(t+1) = \zreal_{k_1}(t)+1, \text{ and } \zfic_{k_1}(t+1) = \zfic_{k_1}(t)+1,$$ while the loads of other bins do not change. Thus, $\zreal_k(t+1) \leq \zfic_k(t+1), \forall \ k \in [N].$
    \item Suppose $\flextype{t} = 1$, with $\flexset{t} = \{k_1, k_2\}$, for some $k_1, k_2 \in [N]$.
    \begin{enumerate}[\arabic{enumi}.\arabic{enumii}.]
          \item If $\zfic_{k_1}(t)<\zfic_{k_2}(t)$ then the flex ball is allocated to bin $k_1$ in the fictional configuration. If the flex ball also goes into bin $k_1$ in the real configuration, then the load of bin $k_1$ increases by 1 in both bin configurations. The loads of all other bins do not change, and so the induction holds. If the flex ball goes into bin $k_2$ in the real configuration, then it must be the case that $\zreal_{k_2}(t) \leq \zreal_{k_1}(t)$, which implies:
          \[
          \zreal_{k_2}(t) \leq \zreal_{k_1}(t)\leq \zfic_{k_1}(t)<\zfic_{k_2}(t),\]
          where the second inequality holds by the inductive hypothesis, and the third inequality holds by assumption, in this case. Since the flex ball goes into bin $k_2$ in the real configuration, we then have:
         \[\zreal_{k_2}(t+1) = \zreal_{k_2}(t)+1 \leq \zfic_{k_2}(t) = \zfic_{k_2}(t+1),\]
         since the flex ball was allocated to bin $k_1$ in the fictitious configuration.
         Since the loads of all other binds remain unchanged, the claim holds for all $k \in [N]$. 
          \item If $\zfic_{k_1}(t)>\zfic_{k_2}(t)$, a symmetric argument as the one used for Case 2.1 can be used, and the claim holds in this case as well.
          \item If $\zfic_{k_1}(t)=\zfic_{k_2}(t)$ and $k_1 < k_2$, then a flex ball goes into bin $k_1$ in the fictional configuration, by the lexicographic allocation rule. In this case, if $\zreal_{k_1}(t) \leq \zreal_{k_2}(t)$, the flex ball is also allocated to bin $k_1$ in the real allocation, and the induction still holds. If $\zreal_{k_1}(t) > \zreal_{k_2}(t)$, on the other hand, then the flex ball goes into bin $k_2$. In that scenario, we must have: $$\zreal_{k_2}(t) \leq \zreal_{k_1}(t)-1\leq \zfic_{k_1}(t)-1=\zfic_{k_2}(t)-1,$$
          where the second inequality follows from the inductive hypothesis, and the final equality is  by assumption. This leads to: $$\zreal_{k_2}(t+1) = \zreal_{k_2}(t)+1 \leq \zfic_{k_2}(t) = \zfic_{k_2}(t+1),$$
          since $k_2$ was not chosen in the fictitious configuration. Since all other bin loads remain unchanged, the claim holds. 
          We omit the proof for the case where $k_1 > k_2$, as it is symmetric.
    \end{enumerate}
\end{enumerate}
\end{proof}

\subsubsection{Proof of \Cref{lem:inventory_bound}}\label{apx:inventory_bound_lem}

\begin{proof}{Proof.}
For $i,j \in [N]$, let $F_{ij}^2$ be the event that $i$ and $j$ are respectively the (strictly) maximally and minimally loaded bins at $t_2$, and that their loads intersected for some \mbox{$\tau := \max\left\{t \in \{t_1,...,t_2-1\} \mid \xp_i(t) = \xp_j(t)\right\}$}. Formally:
\begin{align*}
    F_{ij}^2 &:= \Big\{\history{t_2} \in \historyset{t_2}\mid i \in \arg\max_{k\in[N]} \xp_k(t_2), j \in \arg\min_{k\in[N]} \xp_k(t_2),\\
&\hspace{3.5cm}\xp_i(t) = \xp_j(t) \text{ for some } t \in \{t_1,...,t_2-1\},\xp_i(t_2) \neq \xp_j(t_2)\Big\}.
\end{align*}
With slight abuse of notation, we denote by $F_{ij}^2 \cap \tau$ the event that $F_{ij}^2$ occurs and $\tau$ is the last time \mbox{$\xp_i(t) = \xp_j(t)$} before $t_2$. For $t > t_2$, we have:
\begin{equation}\label{eq:static_decomp}
\begin{split}
     &\quad\;\mathbb{P}(\Gapp(t) \geq S- t/N \mid t_1, t_2)\\
    &= \mathbb{P}(\Gapp(t) \geq S- t/N\mid t_1, t_2, \Gapp(t_2) = 0) \cdot \mathbb{P}(\Gapp(t_2) = 0 \mid t_1, t_2) \\
    &\quad + \mathbb{P}(\Gapp(t) \geq S- t/N\mid t_1, t_2, \Gapp(t_2) > 0) \cdot \mathbb{P}(\Gapp(t_2) > 0 \mid t_1, t_2)\\
    &\leq \mathbb{P}(\Gapp(t) \geq S- t/N\mid t_1, t_2, \Gapp(t_2) = 0) \\
    &\quad + \sum_{i,j} \left(\mathbb{P}(\Gapp(t) \geq S- t/N|t_1, t_2, F_{ij}^1) \mathbb{P}(F_{ij}^1 \mid t_1, t_2)+\mathbb{P}(\Gapp(t) \geq S- t/N|t_1, t_2, F_{ij}^2) \mathbb{P}(F_{ij}^2 \mid t_1, t_2) \right) \\
    &\leq \underbrace{\mathbb{P}(\Gapp(t) \geq S- t/N\mid t_1, t_2, \Gapp(t_2) = 0)}_{(I)} \\
    &\quad + \sum_{i,j} \left(\mathbb{P}(F_{ij}^1 \mid t_1, t_2) + \sum_{\tau = t_1}^{t_2-1}\underbrace{ \mathbb{P}(\Gapp(t) \geq S- t/N|t_1, t_2, F_{ij}^2 \cap \tau) \mathbb{P}(F_{ij}^2 \cap \tau \mid t_1, t_2)}_{(II)} \right).
\end{split}
\end{equation}
{Here, the first inequality above comes from bounding $\mathbb{P}(\Gapp(t_2) = 0 \mid t_1, t_2) \leq 1$ and applies a union bound over $F_{ij}^1, F_{ij}^2$, for $i, j \in [N]$. The second inequality similarly bounds \mbox{$\mathbb{P}(\Gapp(t) \geq S- t/N \mid t_1, t_2, F_{ij}^1) \leq 1$}, and applies the law of total probability on the value of $\tau$.} Hence, it remains to bound $(I)$ and $(II)$.

{Consider first $(I)$, which represents the probability that at least $S$ balls have been placed in a single bin by time $t$, given that the system was perfectly balanced at $t_2$. Conditional on $\Gapp(t_2) = 0$, $\pi$ takes the same actions as the always-flex policy would if it was initialized with all-empty bins at time $t_2$.} {
Thus, for all $t > t_2$: %\chdelete{In \eqref{eq:static_decomp} above,} 
\begin{align}
    (I) = \mathbb{P}(\Gapp(t) \geq S- t/N\mid t_1, t_2, \Gapp(t_2) = 0) &=\mathbb{P}(\Gapa(t - t_2) \geq S - t/N \mid t_2) \notag \\
    &\leq \beta e^{-c_a\left(S-t/N\right)}, \label{eq:af_relation}
\end{align}
for some constants $c_a, \beta > 0$, where the final inequality follows from \Cref{prop:af_gap}.}
Summing the above over all $t > t_2$, we have:
\begin{align}
\sum_{t = t_2+1}^{N(S-1)+1} \mathbb{P}\left(\Gapp(t) \geq S-t/N \mid t_1, t_2, \Gapp(t_2) = 0\right) &\leq \sum_{t =t_2+1}^{N(S-1)+1}\beta e^{-c_a\left(S-t/N\right)} \notag \\ &\leq \int_{t_2}^{N(S-1)+1}\beta e^{-\cstaf (S-t/N)} dt \notag \\
    &= \beta e^{-c_a S} \cdot \frac{N}{c_a}\left(e^{c_a(S-1+1/N)}-e^{c_a t_2/N}\right) \notag \\ 
    &= \frac{\beta N}{c_a}\left(e^{-c_a(1-1/N)}-e^{-c_a\left(S-t_2/N\right)}\right) \notag \\
    &\leq a_4, \label{eq:i-for-lemma-12}
\end{align}
for some constant $a_4 > 0$, since $S-t_2/N > 0$ for all $t_2 < N(S-1)+1$. 

\smallskip 

We now bound $(II)$, for all $i, j \in [N]$, $t > t_2$. Intuitively, this represents the likelihood that at least $S$ balls are placed into a single bin by time $t > t_2$, if the maximally and minimally loaded bins intersected for some $\tau \in \{t_1,\ldots,t_2-1\}$. By \cref{lem:e2ij}, there exists $\alpha_2 > 0$ such that {
\begin{align}\label{eq:lem12-fij2-1}
\mathbb{P}\left(F_{ij}^2\cap \tau \mid t_1, t_2 \right)\leq \mathbb{P}(F_{ij}^2 \mid t_1, t_2, \tau) \leq  5 e^{-\alpha_2 (t_2 - \tau)}.
\end{align}
}
Moreover, conditional on $F_{ij}^2\cap\tau$, $\Gapp(t_2)\leq t_2-\tau$, since the gap can increase by at most one in each period.
Therefore:
\begin{align*}
\mathbb{P}(\Gapp(t) \geq S- t/N\mid t_1, t_2, F_{ij}^2 \cap \tau) &= \mathbb{P}(\Gapp(t) \geq S- t/N\mid t_1, t_2, F_{ij}^2 \cap \tau, \Gapp(t_2) \leq t_2 - \tau) \\
% &= \mathbb{P}(\Gapp(t) \geq S- t/N\mid t_2, F_{ij}^2 \cap \tau, \Gapp(t_2) \leq t_2 - \tau) \\
&\leq \mathbb{P}(\Gapp(t) \geq S- t/N - (t_2-\tau)\mid t_2, \Gapp(t_2) = 0),
\end{align*}
by \Cref{cl:fic_allocation}. Again, conditioned on $\Gapp(t_2) = 0$, $\pi$ makes the same decisions as the always-flex policy initialized at $t_2$ with all-empty bins. Therefore,
\begin{align}\label{eq:lem12-fij2-2}
\mathbb{P}(\Gapp(t) \geq S- t/N\mid t_1, t_2, F_{ij}^2 \cap \tau) &\leq \mathbb{P}(\Gap^a(t-t_2) \geq S- t/N - (t_2-\tau) \mid t_2) \notag \\
&\leq \beta e^{-c_a\left(S-t/N-(t_2-\tau)\right)},
\end{align}
for all $t < N(S-(t_2-\tau))$ (thereby ensuring $S-t/N-(t_2-\tau) > 0$), by \Cref{prop:af_gap}. Putting together \eqref{eq:lem12-fij2-1} and \eqref{eq:lem12-fij2-2}, completes the bound of $(II)$.

Summing $(II)$ over all \mbox{$\tau \in \{t_1,\ldots,t_2-1\}$}, $t > t_2$ and $i,j \in [N]$, we obtain:
\begin{align}
&\sum_{t = t_2+1}^{N(S-1)+1}\sum_{i,j}\sum_{\tau = t_1}^{t_2-1}\mathbb{P}(\Gapp(t) \geq S- t/N \mid t_1, t_2, F_{ij}^2 \cap \tau)\mathbb{P}\left(F_{ij}^2\cap \tau \mid t_1, t_2\right) \notag \\&\leq N^2 \sum_{\tau=t_1}^{t_2-1}5e^{-\alpha_2 (t_2 - \tau)}\left[\left(\sum_{t=t_2+1}^{N(S-(t_2-\tau))-1}\beta e^{-c_a\left(S-t/N-(t_2-\tau)\right)}\right) + \left(N(t_2-\tau-1)+2\right)\right] 
, \label{eq:af_again}
\end{align}
where the final inequality loosely upper bounds $\mathbb{P}\left(\Gapp(t) \geq S-t/N \mid t_1, t_2, F_{ij}^2 \cap \tau \right) \leq 1$ for all \mbox{$t \geq N(S-(t_2-\tau))$}. Using the same bounding arguments as those used in \eqref{eq:exponential_small}, there exist constants $a_5, a_6 > 0$ such that:
\begin{align}\label{eq:af_again2}
\eqref{eq:af_again} \leq N^2 \sum_{\tau=t_1}^{t_2-1}5 e^{-\alpha_2 (t_2 - \tau)}\left[a_5 + \left(N(t_2-\tau-1)+2\right)\right] \leq a_6.
\end{align}

Summing \eqref{eq:i-for-lemma-12} and \eqref{eq:af_again2}, letting $a_3:= a_4 + a_6$ and plugging these bounds back into \eqref{eq:static_decomp}, we obtain the statement of the lemma.
\end{proof}

\subsubsection{Proof of \Cref{prop:af_gap}}\label{apx:af_gap}

\begin{proof}{Proof.}
For $t \in \mathbb{N}$, let $y_i(t)$ denote the difference between the load of the $i$-th most loaded bin and the average load in period $t$ under the always-flex policy, with $y(t) := (y_i(t), i \in [N])$. Moreover, consider the potential function $$\Gamma(t) = \sum_{i = 1}^N \exp(c_1 \epsilon y_i(t)) + \sum_{i = 1}^N \exp(-c_1 \epsilon y_i(t)),$$ where $c_1 > 0$, $\epsilon > 0$ are constants that depend on $N$ but not on $S$. By Lemma 9 in \citet{elmachtoub2019value}, there exists a constant $c_2 > 0$ such that:
\begin{align}\label{eq:gamma-bound}
\mathbb{E}[\Gamma(t)] \leq \frac{c_2}{\epsilon^7}N\ \forall \ t \geq 0.
\end{align}

Since $\Gapa(t) = y_1(t)$ by definition, we can loosely lower bound $\Gamma(t)$ as \mbox{$\Gamma(t) \geq e^{c_1 \epsilon \Gapa(t)}$}. Then, for any $\eta > 0$,
\begin{align*}
    \mathbb{P}(\Gapa(t) \geq \eta) = \mathbb{P}(e^{c_1 \epsilon \Gapa(t)} \geq e^{c_1 \epsilon \eta})\leq \mathbb{P}(\Gamma(t) \geq e^{c_1 \epsilon \eta}) &= \mathbb{P}\left(\Gamma(t) \geq \mathbb{E}[\Gamma(t)]\cdot \frac{e^{c_1 \epsilon \eta}}{\mathbb{E}[\Gamma(t)]}\right)\\
    &\leq \mathbb{P}\left(\Gamma(t) \geq \mathbb{E}[\Gamma(t)] \cdot \frac{e^{c_1 \epsilon \eta}}{\frac{c_2}{\epsilon^7}N}\right)\\
    &\leq \frac{c_2}{\epsilon^7} N e^{-c_1 \epsilon \eta},
\end{align*}
where the second inequality follows from \eqref{eq:gamma-bound}, and the last step follows from Markov's inequality. Taking $\cstaf = c_1 \epsilon$ and $\beta = \frac{c_2}{\epsilon^7} N$ completes the proof.
\end{proof}

\medskip

\subsubsection{Proof of \texorpdfstring{\cref{cor:comparison}}{Lg}}\label{app:comparison} 
\begin{proof}{Proof.}
For \cref{cor:comparison} (i), we have:
\begin{align*}
{Pr}^{d} - {Pr}^{a} &= (Rev^d-Inv^d)-(Rev^a-Inv^a)
\\
&=\left((Rev^*-\delta q_o \xi_d^1)-(Inv^*+\xi_d^2)\right)-\left((Rev^*-\delta q_o)-(Inv^*+\xi_a)\right)
\\
&= \delta \cdot q_o \parenthesis{1-\xi_d^1} + \parenthesis{\xi_a - \xi_d^2}\geq  \delta \cdot q_o \parenthesis{1-\xi_d^1} -\xi_d^2,
\end{align*}
where the first equality follows from \Cref{thm:dynamic_objective} and \Cref{prop:af_objective}.
Since $\xi_d^1, \xi_d^2 \in o(1)$ and $\delta q_o \in \Theta(1)$, we have that $\delta \cdot q_o \parenthesis{1-\xi_d^1} - \xi_d^2 \in \Omega(1).$ Thus, ${Pr}^{d} - {Pr}^{a} \in \Omega(1).$

For \cref{cor:comparison} (ii), we first compare ${Inv}^{d}$ and ${Inv}^{nf}$. Respectively applying \eqref{eq:nf_tight} and \eqref{eq:inv_tight} to each of these terms, we have:
\begin{align}
{Inv}^{nf} - {Inv}^{d}\;&\geq \frac{K}{NS - \thetan}+h\frac{(NS)^2-\frac{NS}{2} \thetan}{2(NS - \thetan)} - \parenthesis{\frac{K}{NS-\thetad} + \frac{NS +1 +\thetad}{2}h}\notag\\
&= K \cdot \frac{\thetan-\thetad}{(NS-\thetan)(NS-\thetad)} + \frac{h}{2} \frac{NS\parenthesis{\frac{\thetan}{2}-\thetad-1}+\thetan+\thetan \thetad}{(NS-\thetan)}\notag\\
&\geq  K \cdot \frac{\thetan-\thetad}{(NS)^2} + \frac{h}{2} \frac{NS\parenthesis{\frac{\thetan}{2}-\thetad-1}}{NS}\notag \\
&= K \cdot \frac{\thetan-\thetad}{(NS)^2} + \frac{h}{2} \cdot {\parenthesis{\frac{\thetan}{2}-\thetad-1}}.\label{eq:inv_comparison}
\end{align}
The first equality above comes from rearranging the $K$ and $h$ terms; the second inequality comes from upper bounding the $NS-\thetan$ and $NS-\thetad$ terms in the denominators by $NS$, while also lower bounding $NS\parenthesis{\frac{\thetan}{2}-\thetad-1}+\thetan+\thetan \thetad$ by $NS\parenthesis{\frac{\thetan}{2}-\thetad-1}$ in the numerator.

We now bound the difference in the revenues achieved by each policy. Recall, $Rev^{nf} = \hat{p}$. Applying \eqref{eq:rev_tight} from the proof of \Cref{thm:dynamic_objective} to $Rev^d$, we have: 
\begin{equation}\label{eq:rev_temp}
    {Rev}^{nf} - {Rev}^{d} = \delta \cdot q_o \frac{\mathbb{E}[T - \Tstar]}{NS - \thetad}.
\end{equation}
We now bound $\mathbb{E}[T-\Tstar]$. {Recall, in order to prove \Cref{thm:ball_semi2}, we provided an explicit bound on the bound the time that any semi-dynamic policy that starts flexing, given by $T_a = \inf\left\{\Gap^{nf}(t) \geq \frac{a(T-t)q}{N}\right\}$, where $a > 0$ is a constant. Applying this bound (see \eqref{eq:tight_analysis}) to our semi-dynamic policy here, with $a = \cstsemi$ and $T_a = \Tstar$, we have:
}
\begin{align*}
    \mathbb{E}[T-T^\star] &\leq\parenthesis{N \frac{3\sqrt{N} \cdot \parenthesis{1 - \frac{3}{N} + \frac{4}{N^2} - \frac{2}{N^3}}}{\parenthesis{1-\frac{1}{N}}^{1.5}} \sqrt{\frac{\cstsemi q_o + N}{\cstsemi q_o}} + N \frac{\sqrt{N-1}}{\cstsemi q_o} \parenthesis{1+ \sqrt{\frac{1}{2 \pi}}}} \cdot \sqrt{T}\\
    &\leq \frac{N\sqrt{T}}{q_o} \cdot \parenthesis{\frac{3\sqrt{N} \cdot \parenthesis{1 - \frac{3}{N} + \frac{4}{N^2} - \frac{2}{N^3}}}{\parenthesis{1-\frac{1}{N}}^{1.5}} \sqrt{1+5N^3} + 7N^{2.5}},
\end{align*}
where the second inequality uses the fact that $c_d = \frac{1}{10{N\choose 2}}$ and \mbox{$q_o \leq 1$} when factoring out $1/q_o$. Noting that, for all $N \geq 2$, $1-\frac3N+\frac4N^2-\frac2N^3\leq 2$, $\left(1-\frac1N\right)^{1.5} \geq 1/4$, and \mbox{$\sqrt{1+5N^3} \leq \sqrt{6N^3}$}, we obtain:
\begin{align*}
\mathbb{E}[T-\Tstar]&\leq \frac{N\sqrt{T}}{q_o} \left(24\sqrt{6}N^2 + 7N^{2.5}\right) \leq \frac{49N^{3.5}\sqrt{T}}{q_o},
\end{align*}
where the final inequality follows from the fact that $24\sqrt{6}N^2 \leq 42N^{2.5}$ for all $N \geq 2$. Plugging the upper bound on $\mathbb{E}[T-T^\star]$ into \eqref{eq:rev_temp}, and noting that $T\leq NS$, we obtain:
\begin{equation}\label{eq:rev_comparison}
    {Rev}^{nf} - {Rev}^{d} \leq \delta \cdot \frac{49 N^{3.5} \sqrt{NS}}{NS - \thetad}.
\end{equation}

Using the lower bound on the difference in inventory costs derived in \eqref{eq:inv_comparison}, we obtain the following lower bound on the profit difference:
\begin{align*}
    {Pr}^{d} - {Pr}^{nf} &\geq K \cdot \underbrace{\frac{\thetan-\thetad}{(NS)^2}}_{(I)} + \frac{h}{2} \underbrace{{\parenthesis{\frac{\thetan}{2}-\thetad-1}}}_{(II)} - \delta \cdot \frac{49 N^{3.5} \sqrt{NS}}{NS - \thetad}\\
    &=\thetan\cdot \frac{5\psi}{4NS} - \thetad\cdot \frac{3\psi}{2NS} - \delta \cdot \frac{49 N^{3.5} \sqrt{NS}}{NS - \thetad}-\frac{\psi}{2NS},
\end{align*}
where the equality uses the assumption that $K = \psi\cdot NS$ and $h = \psi/(NS)$. Using the fact that $\thetad \in \mathcal{O}(1)$ by \Cref{thm:dynamic_objective}, there exists $\theta_1 \in \mathcal{O}\left(\frac1S\right)$ such that:
\begin{align*}
    {Pr}^{d} - {Pr}^{nf}
    &=\thetan\cdot \frac{5\psi}{4NS} - \delta \cdot \frac{49 N^{3.5} \sqrt{NS}}{NS - \thetad}-\theta_1\\
    &= \frac{1}{NS}\left(\frac{5\psi\thetan}{4}-\delta\cdot\frac{49N^{3.5}\sqrt{NS}}{1-\thetad/(NS)}\right)-\theta_1\\
    &= \frac{1}{NS}\cdot \frac{5\psi\thetan(1-\thetad/NS)-196\delta N^4\sqrt{S}}{4(1-\thetad/NS)}-\theta_1.
\end{align*}
Therefore, it suffices to show the existence of a constant $\psi > 0$ such that \[{5\psi\thetan(1-\thetad/NS)-196\delta N^4\sqrt{S}} \in \Omega(\sqrt{S}),\]
since $0 <4(1-\thetad/NS) \leq 4$ for large enough $S$, in the denominator. Equivalently, it suffices to show the existence of a constant $\psi$ such that:
\begin{align}\label{eq:lb-thetan}
5\psi\frac{\thetan}{\sqrt{S}}(1-\thetad/NS)-196\delta N^4 \geq \theta_2,
\end{align}
for some $\theta_2 \in \Omega(1)$. Recall, we derived tight bounds on $\mathbb{E}[R^{nf}]$ in the proof of \Cref{prop:nf_objective}. In particular, by \eqref{eq:tight_nf_bound}, we had that, for large enough $S$: 
\begin{align*}
&N \cdot \parenthesis{\sqrt{S \log(N)/2 - \parenthesis{\log(N)/4}^2} - \log(N)/4}\left(1-o(1)\right)\\& \geq \frac12\cdot N \parenthesis{\sqrt{S \log(N)/2} -\frac{\parenthesis{\log(N)/4}^2}{2\sqrt{S \log(N)/2 - \parenthesis{\log(N)/4}^2}} - \log(N)/4} \\
&\geq \frac12 N\sqrt{S\log(N)/2}-\theta_3,
\end{align*}
for some $\theta_3 \in \Theta(1)$.
Plugging this lower bound into the left-hand side of \eqref{eq:lb-thetan}, and again using the fact that $\thetad\in\mathcal{O}(1)$, we obtain:
\begin{align*}
5\psi\frac{\thetan}{\sqrt{S}}(1-\thetad/NS)-196\delta N^4 \geq 5\psi\cdot \left(\frac{N\sqrt{\log(N)/2}}{2}-\frac{\theta_3}{\sqrt{S}}-\theta_4\right)-196\delta N^4,
\end{align*}
for $\theta_4 \in \mathcal{O}\left(\frac1S\right)$. For large enough $S$, this term is strictly positive for all $\psi$ satisfying:
\begin{align*}
\psi &> \frac{196\delta N^4}{(5/4)N\sqrt{\log(N)/2}}.
\end{align*}
Noting that the right-hand side is upper bounded by $267N^3\delta$, and letting $C = 267N^3$, we obtain the result.
\end{proof}

\section{Additional Opaque Selling Experiments}\label{apx:numerical-results-balls-into-bins}

{Throughout these experiments, we let $\mu = 0.1, \delta = 0.05$, $\underline{v} = 0.6$, and randomize over $K, h$ and $\alpha$ as in \cref{sec:opaque_extensions}. All results are averaged over $100$ replications.}

{
\paragraph{Sensitivity analysis with respect to $\underline{c}$.} We now investigate the robustness of our insights to the marginal cost $\underline{c}$. We present our results in \Cref{tab:c}, where we report the average marginal production cost per period for the three policies of interest in Columns 2-4, and the relative profit improvement of the semi-dynamic policy over $\pi^{nf}, \pi^a, \max\{\pi^{nf},\pi^a\}$ in Columns 5-7. 

Production costs dampen the benefits of increased sales, since additional units sold now incur additional cost. We observe that the always-flex policy incurs the highest marginal cost per period due to the greater sales volume induced by the opaque option, {followed by the semi-dynamic and the no-flex policies. Moreover, the absolute difference in production costs between the semi-dynamic and always-flex policy increases with $\underline{c}$.} {As a result, as $\underline{c}$ increases, the semi-dynamic policy yields a substantial improvement over the always-flex policy (varying from 8.4\% up to 26.1\%), while maintaining robust performance relative to the no-flex policy (always exceeding 5\%). Across all tested values of $\underline{c}$, the semi-dynamic policy consistently achieves a relative profit improvement of more than 2.4\% over the better of the two benchmarks.} Overall, these results illustrate that late-stage opaque selling is most valuable in settings where the inventory cost savings outweigh the production cost incurred by the lift in sales volume.}

%{These results illustrate that, while late-stage opaque selling yields profit improvements even in the presence of marginal production costs, these benefits are highest in settings where the marginal production cost is low (which holds in many retail settings).} \chcomment{left off here} 

\begin{table}%[t]
\centering
\begin{tabular}{c|ccc|ccc}
\toprule
& \multicolumn{3}{c|}{Marginal Cost per Period} & \multicolumn{3}{c}{Relative Profit Improvement (\%)} \\
\midrule
$\underline{c}$ & $\pi^{nf}$ & $\pi^{a}$ & $\pi^{d}$ & $\pi^{nf}$ & $\pi^{a}$ & $\max\{\pi^{nf},\pi^{a}\}$ \\
\midrule
$0$ & 0 & 0 & 0 & 5.9 & 8.4 & 2.4 \\
$0.05$ & 0.043 & 0.046 & 0.044 & 6.1 & 9.8 & 2.6 \\
$0.10$ & 0.084 & 0.092 & 0.088 & 6.9 & 13.5 & 3.9 \\
$0.15$ & 0.125 & 0.138 & 0.131 & 6.8 & 18.5 & 4.1 \\
$0.20$ & 0.164 & 0.184 & 0.173 & 5.0 & 26.1 & 3.5 \\
\bottomrule
\end{tabular}
\caption{Impact of $\underline{c}$ on semi-dynamic policy performance.}\label{tab:c}
\end{table}

{
\paragraph{Dependence on $K$ and $h$.} Finally, we examine how the relative performance of the semi-dynamic policy varies with inventory cost parameters $K$ and $h$. Consistent with \Cref{cor:comparison}, \Cref{fig:pattern-a} shows that the semi-dynamic policy underperforms the no-flex policy only when $h$ and $K$ are very small, and its performance gains become significant as $h$ and $K$ increase. In contrast, the semi-dynamic policy maintains a consistent advantage over the always-flex policy across all tested regions of $h$ and $K$. As shown in \Cref{fig:pattern-b}, the relative improvement is larger when $h$ and $K$ are large, given the fact that the always-flex policy holds higher inventory on average (as discussed earlier), thereby incurring higher inventory costs overall.}

\begin{figure}%[H]
    \centering
    \subfloat[\centering Relative profit improvement (\%) over $\pi^{nf}$]{{\includegraphics[width=0.45\textwidth]{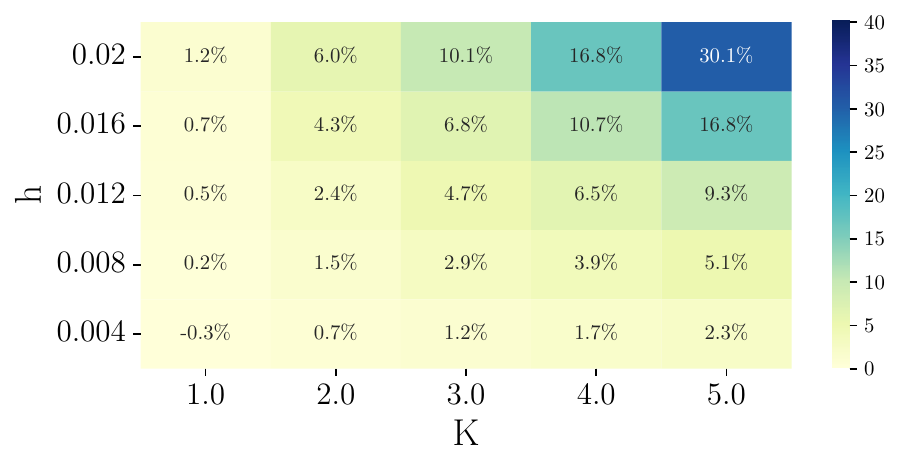}}\label{fig:pattern-a}}%
    \quad 
    \subfloat[\centering Relative profit improvement (\%) over $\pi^{a}$]{{\includegraphics[width=0.45\textwidth]{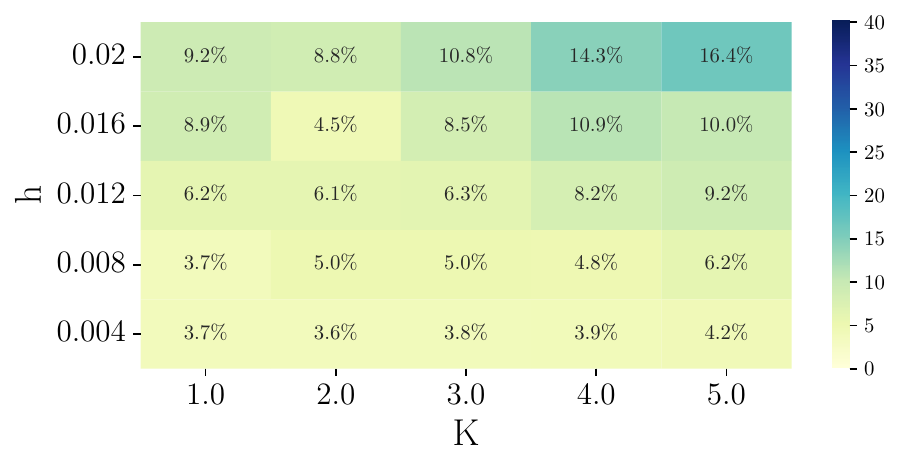}}\label{fig:pattern-b}}%
    \caption{\centering Heatmaps for the average relative profit improvement of $\pid$ over $\pi^{nf}$ and $\pia$.}
    \label{fig:pattern}
\end{figure}

}

\end{APPENDICES}

%%%%%%%%%%%%%%%%%
\end{document}